\newcommand{\ul}[1]{\underline{#1}}
\newcommand{\mc}[1]{\mathcal{#1}}
\newcommand{\mb}[1]{\mathbb{#1}}
\newcommand{\mr}[1]{\mathrm{#1}}
\newcommand{\mit}[1]{\mathit{#1}}
\newcommand{\mf}[1]{\mathfrak{#1}}
\newcommand{\abs}[1]{\left\lvert #1 \right\rvert}
\newcommand{\td}[1]{\widetilde{#1}}
\newcommand{\ZZ}{\mathbb{Z}}
\newcommand{\QQ}{\mathbb{Q}}
\newcommand{\FF}{\mathbb{F}}
\newcommand{\Alg}{\mathrm{Alg}}
\newcommand{\Mod}{\mathrm{Mod}}
\newcommand{\Id}{\mathrm{Id}}
\newcommand{\Sp}{\mathrm{Sp}}
\newcommand{\Top}{\mathrm{Top}}
\newcommand{\Coalg}{\mathrm{Coalg}}
\newcommand{\Lie}{\mathrm{Lie}}
\newcommand{\Comm}{\mathrm{Comm}}
\newcommand{\op}{\mathrm{op}}
 \newtheorem{thm}[equation]{Theorem}
 \newtheorem{cor}[equation]{Corollary}
 \newtheorem{lem}[equation]{Lemma}
 \newtheorem{prop}[equation]{Proposition}
 \newtheorem{ques}[equation]{Question}
\theoremstyle{definition}
 \newtheorem{ex}[equation]{Example}
 \newtheorem{rmk}[equation]{Remark}
\newtheorem*{thm*}{Theorem}
\newtheorem*{cor*}{Corollary}
\newtheorem*{lem*}{Lemma}
\newtheorem*{prop*}{Proposition}
\newtheorem*{defn*}{Definition}
\newtheorem*{ex*}{Example}
\newtheorem*{exs*}{Examples}
\newtheorem*{rmk*}{Remark}
\newtheorem*{claim*}{Claim}
\numberwithin{equation}{section}
\numberwithin{figure}{section}
\DeclareMathOperator{\Tor}{Tor}
\DeclareMathOperator{\Prim}{Prim}
\DeclareMathOperator{\TAQ}{TAQ}
\DeclareMathOperator{\Ho}{Ho}
\DeclareMathOperator*{\holim}{holim}
\DeclareMathOperator*{\hocolim}{hocolim}
\DeclareMathOperator*{\Tot}{Tot}
\title[Spectral algebra models]{Spectral algebra models of unstable $v_n$-periodic homotopy theory}
\author{Mark Behrens and Charles Rezk}
\date{\today}
\begin{document}

\begin{abstract}
We give a survey of a generalization of Quillen-Sullivan rational homotopy theory which gives spectral algebra models of unstable $v_n$-periodic homotopy types.  In addition to describing and contextualizing our original approach, we sketch two other recent approaches which are of a more conceptual nature, due to Arone-Ching and Heuts.  In the process, we also survey many relevant concepts which arise in the study of spectral algebra over operads, including topological Andr\'e-Quillen cohomology, Koszul duality, and Goodwillie calculus.
\end{abstract}

\maketitle

\tableofcontents

\section{Introduction}

In his seminal paper \cite{Quillen}, Quillen showed that there are equivalences of homotopy categories
$$ \Ho(\Top^{\ge 2}_\QQ) \simeq  \Ho(\mr{DG}\Coalg^{\ge 2}_\QQ) \simeq \Ho(\mr{DG}\Lie_\QQ^{\ge 1}) $$
between simply connected rational spaces, simply connected rational differential graded commutative coalgebras, and connected rational differential graded Lie algebras.  In particular, given a simply connected space $X$, there are \emph{models} of its rational homotopy type
\begin{align*}
C_\QQ(X) & \in \mr{DG}\Coalg_\QQ, \\
L_\QQ(X) & \in \mr{DG}\Lie_\QQ
\end{align*}
such that 
\begin{align*}
H_*(C_\QQ(X)) & \cong H_*(X;\QQ) \quad & \text{(isomorphism of coalgebras)}, \\
H_*(L_\QQ(X)) & \cong \pi_{*+1}(X) \otimes \QQ \quad & \text{(isomorphism of Lie algebras)}. 
\end{align*}
In the case where the space $X$ is of finite type, one can also extract its rational homotopy type from the dual $C_\QQ(X)^\vee$, regarded as a differential graded commutative algebra.  This was the perspective of Sullivan \cite{Sullivan}, whose notion of minimal models enhanced the computability of the theory.

The purpose of this paper is to give a survey of an emerging generalization of this theory where unstable rational homotopy is replaced by $v_n$-periodic homotopy.  

Namely, the Bousfield-Kuhn functor $\Phi_{K(n)}$ is a functor from spaces to spectra, such that the homotopy groups of $\Phi_{K(n)}(X)$ are a version of the unstable $v_n$-periodic homotopy groups of $X$.  We say that a space $X$ is $\Phi_{K(n)}$-\emph{good} if the Goodwillie tower of $\Phi_{K(n)}$ converges at $X$.  A theorem of Arone-Mahowald \cite{AroneMahowald} proves spheres are $\Phi_{K(n)}$-good.  
%Moreover, if any two spaces in a fiber sequence are $\Phi_{K(n)}$-good, then so is the third.

The main result is the following theorem (Theorem~\ref{thm:main}, Corollary~\ref{cor:main}).

\begin{thm}
There is a natural transformation (the ``comparison map'')
$$ c_X^{K(n)}: \Phi_{K(n)}(X) \rightarrow \TAQ_{S_{K(n)}}(S^X_{K(n)}) $$
which is an equivalence on finite $\Phi_{K(n)}$-good spaces.
\end{thm}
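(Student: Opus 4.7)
The plan is to prove the theorem by comparing the Goodwillie towers of the two functors $X \mapsto \Phi_{K(n)}(X)$ and $X \mapsto \TAQ_{S_{K(n)}}(S^X_{K(n)})$ from pointed spaces to spectra, and then exploiting $\Phi_{K(n)}$-goodness to pass from an equivalence of towers to an equivalence of functors at $X$.

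First I would construct the comparison map $c^{K(n)}$ itself. The assignment $X \mapsto S^X_{K(n)}$ lands in augmented commutative $S_{K(n)}$-algebras, and $\TAQ_{S_{K(n)}}$ is its derived indecomposables; the composite is a stable, reduced invariant of pointed spaces. On the other side, the Bousfield-Kuhn functor has a universal property characterizing maps out of it in terms of stable, $v_n$-periodic data. Composing the canonical stabilization map $\Sigma^\infty_{S_{K(n)}} \to \TAQ_{S_{K(n)}}$ (on augmented commutative $S_{K(n)}$-algebras) with this universal map from $\Phi_{K(n)}$ produces $c_X^{K(n)}$ in a natural way.

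The core strategy is then a layer-by-layer comparison of Taylor towers. Both source and target are reduced functors of pointed spaces to spectra, so each carries a Goodwillie tower. The derivatives $\partial_*\Phi_{K(n)}$ are, by Kuhn's telescopic functor theorem, identified with the $K(n)$-localized derivatives $L_{K(n)}\partial_*\Id$ of the identity on pointed spaces. On the other hand, the derivatives of $X \mapsto \TAQ_{S_{K(n)}}(S^X_{K(n)})$ are computed, via Koszul duality for the commutative operad in $K(n)$-local $S_{K(n)}$-modules, to be the spectral Lie operad. By the Arone--Ching identification of $\partial_*\Id$ with the spectral Lie operad, these two derivative symmetric sequences coincide, and one verifies that the comparison map realizes this equivalence on each layer.

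The main obstacle is to promote the layerwise equivalence to an equivalence of entire Goodwillie towers. This requires showing that $c^{K(n)}$ is compatible not merely with individual derivatives but with the full operadic (or cosimplicial bar--cobar) structure that binds the derivatives into the tower, i.e.\ with the Koszul duality witnessing the identification on both sides. This is where the substantive content sits, and I expect it to be the hardest step. Once this compatibility is established, the hypothesis that $X$ is $\Phi_{K(n)}$-good supplies convergence of the left-hand tower at $X$ by definition, while the finiteness of $X$ (combined with standard $K(n)$-local nilpotence estimates controlling the augmentation ideal of $S^X_{K(n)}$) supplies convergence of the right-hand tower at $X$; passing to the homotopy inverse limit then yields that $c_X^{K(n)}$ is an equivalence.
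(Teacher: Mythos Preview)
Your overall architecture---compare Goodwillie towers, identify the layers, then invoke convergence---matches the paper's. But you have located the difficulty in the wrong place, and as a result the proposal hand-waves precisely the step that carries all the content.

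You assert that ``one verifies that the comparison map realizes this equivalence on each layer'' and then say the \emph{main obstacle} is to promote this layerwise equivalence to an equivalence of towers via operadic compatibility. This is backwards. Once a natural transformation between two functors induces an equivalence on every layer $D_k$, the fiber sequences $D_k \to P_k \to P_{k-1}$ and an immediate induction force it to be an equivalence on each $P_k$; no further ``operadic compatibility'' is needed. The genuine obstacle is exactly the sentence you passed over: showing that $c^{K(n)}$ itself \emph{induces} the equivalence on layers. Knowing the layers are abstractly equivalent (both are $K(n)$-localizations of $s^{-1}\Lie_k \wedge_{h\Sigma_k} X^{\wedge k}$, by Kuhn's filtration on $\TAQ$ together with Theorem~\ref{thm:tate}) is not enough---one needs to know the induced map on derivatives $\partial_*(c^{K(n)})$ is an equivalence, and this is the heart of the argument.

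The paper does not establish this by any abstract Koszul-duality compatibility. Instead it proceeds computationally: one first proves (Proposition~\ref{prop:key}) that $c^{K(n)}_{QX}$ is an equivalence for suitable $X$ by a Morava $E$-theory calculation using the Dyer--Lashof algebra and a Basterra spectral sequence, then feeds this into the Bousfield--Kan resolution $Q^{\bullet+1}S^q$ to produce a weak retraction of $c^{K(n)}$ on large odd spheres. Since the layers are already known to be abstractly equivalent with finite $K(n)$-homology, a retraction forces the map on derivatives to be an equivalence. Only then does one obtain the equivalence of towers for all finite $X$. Your route via ``compatibility with the full operadic structure'' is closer in spirit to the Arone--Ching argument sketched in Section~\ref{sec:AroneChing}, but you would need to actually supply that compatibility, which is itself nontrivial.

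A minor point: convergence of the right-hand tower does not come from nilpotence estimates on the augmentation ideal. For finite $X$ the Goodwillie tower of $\TAQ_{S_{K(n)}}(S^{(-)}_{K(n)})$ is identified (Corollary~\ref{cor:TAQtower}) with Kuhn's filtration of $\TAQ$, and the latter converges unconditionally because the filtration is exhaustive.
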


Here the target of the comparison map is the topological Andr\'e-Quillen cohomology of the $K(n)$-local Spanier-Whitehead dual of $X$ (regarded as a non-unital commutative algebra over the $K(n)$-local sphere), where $K(n)$ is the $n$th Morava $K$-theory spectrum.  We regard $S^X_{K(n)}$ as a commutative algebra model of the unstable $v_n$-periodic homotopy type of $X$, and the theorem is giving a means of extracting the unstable $v_n$-periodic homotopy groups of $X$ from its commutative algebra model.  A result of Ching \cite{Ching} implies that the target of the comparison map is an algebra over a spectral analog of the Lie operad.  As such, we regard the target as a Lie algebra model for the unstable $v_n$-periodic homotopy type of $X$. 

The original results date back to 2012, and are described in a preprint of the authors \cite{BKTAQ} which has (still?) not been published. The paper is very technical, and the delay in publication is due in part to difficulties in getting these technical details correct.
In the mean-time, Arone-Ching \cite{AroneChingvn} and Heuts \cite{Heuts2} have announced proofs which reproduce and expand on the authors' results using more conceptual techniques.  

The idea of this survey is to provide a means to disseminate the authors' original work until the original account is published.  As \cite{BKTAQ} is more of a forced march than a reflective ramble, it also seemed desirable to have a discussion which explained the main ideas without getting bogged down in the inevitable details one must contend with (which involve careful work with the Morava $E$-theory Dyer-Lashof algebra, amongst other things).  The approach of Arone-Ching uses a localized analog of their classification theory for Taylor towers, together with Ching's Koszul duality for modules over an operad.  Heuts' approach is a byproduct of his theory of polynomial approximations of $\infty$-categories.  Both of these alternatives, as we mentioned before, are more conceptual than our computational approach, but require great care to make precise.

This survey, by contrast, is written to convey the \emph{ideas} behind all three approaches, without delving into many details.  We also attempt to connect the theory with many old and new developments in spectral algebra.  We hope that the interested reader will consult cited sources for more careful treatments of the subjects herein.  In particular, all constructions are implicitly derived/homotopy invariant, and we invite the reader to cast them in his/her favorite model category or $\infty$-category.

\subsection*{Organization of the paper} $\quad$

{\it Section~\ref{sec:general}:} We describe the general notion of stabilization of a homotopy theory, and the Hess/Lurie theory of homotopy descent as a way of encoding unstable homotopy theory as ``stable homotopy theory with descent data''.

{\it Section~\ref{sec:Koszul}:} The equivalence between rational differential graded Lie algebras and rational differential graded commutative coalgebras is an instance of Koszul duality.  We describe the theory of Koszul duality, which provides a correspondence between algebras over an operad, and coalgebras over its Koszul dual.

{\it Section~\ref{sec:models}:} We revisit rational homotopy theory and recast it in spectral terms.  We also describe Mandell's work, which gives commutative algebra models of $p$-adic homotopy types.

{\it Section~\ref{sec:vn}:} We give an overview of chromatic ($v_n$-periodic) homotopy theory, both stable and unstable, and review the Bousfield-Kuhn functor.

{\it Section~\ref{sec:comparison}:} We define the comparison map, and state the main theorem in the case where $X$ is a sphere.

{\it Section~\ref{sec:proof}:} We give an overview of the proof of the main theorem in the case where $X$ is a sphere.  The proof involves Goodwillie calculus and the Morava $E$-theory Dyer-Lashof algebra, both of which we review in this section.

{\it Section~\ref{sec:consequences}:}  We explain how the main theorem extends to all finite $\Phi_{K(n)}$-good spaces.  We also discuss computational consequences of the theorem, most notably the work of Wang and Zhu.

{\it Section~\ref{sec:AroneChing}:}  After summarizing Ching's Koszul duality for modules over an operad, we give an exposition of the Arone-Ching theory of fake Taylor towers, and their classification of polynomial functors.  We then explain how they use this theory, in the localized context, to give a different proof (and strengthening) of the main theorem. 

{\it Section~\ref{sec:Heuts}:}  We summarize Heuts' theory of polynomial approximations of $\infty$-categories, and his general theory of coalgebra models of homotopy types.  We discuss Heuts' application of his general theory to Koszul duality, and to unstable $v_n$-periodic homotopy, where his theory also reproves and strengthens the main theorem.

\subsection*{Conventions}
\begin{itemize}
\item For a commutative ($E_\infty$) ring spectrum $R$, we shall let $\Mod_R$ denote the category of $R$-module spectra, with symmetric monoidal structure given by $\wedge_R$.  For $X, Y$ in $\Mod_R$, we will let $F_R(X,Y)$ denote the spectrum of $R$-module maps from $X$ to $Y$, and $X^\vee := F_R(X,R)$ denotes the $R$-linear dual. For a pointed space $X$, We shall let $R^X$ denote the function spectrum $F(\Sigma^\infty X, R)$. \vspace{7pt}

\item For $X$ a space or spectrum, we shall use $X^\wedge_p$ to denote its $p$-completion with respect to a prime $p$, $X_E$ to denote its Bousfield localization with respect to a spectrum $E$, and $X^{\ge n}$ to denote its $(n-1)$-connected cover.
\vspace{7pt}

\item For all but the last section, our homotopical framework will always implicitly take place in the context of relative categories: a category $\mc{C}$ with a subcategory $\mc{W}$ of ``equivalences'' \cite{DHKS} (in the last section we work in the context of $\infty$-categories).  The homotopy category  will be denoted $\Ho(\mc{C})$, and refers to the localization $\mc{C}[\mc{W}^{-1}]$. Functors between homotopy categories are always implicitly derived.  We shall use $\mc{C}(X,Y)$ to refer to the maps in $\mc{C}$, and $[X,Y]_{\mc{C}}$ to denote the maps in $\Ho(\mc{C})$.  We shall use $\ul{\mc{C}}(X,Y)$ to denote the derived mapping space.  
\vspace{7pt}

\item $\Top_*$ denotes the category of pointed spaces (with equivalences the weak homotopy equivalences), $\Sp$ the category of spectra (with equivalences the stable equivalences), and for a spectrum $E$, $(\Top_*)_E$ and $\Sp_E$ denote the variants where we take the equivalences to be the $E$-homology isomorphisms. 
\vspace{7pt}

\item All operads $\mc{O}$ in $\Mod_R$ are assumed to be reduced, in the sense that $\mc{O}_0 = \ast$ and $\mc{O}_1 = R$.  We shall let $\Alg_\mc{O}$ denote the category of $\mc{O}$-algebras.  As spelled out in greater detail in Section~\ref{sec:Koszul}, $\TAQ^\mc{O}$ will denote topological Andr\'e-Quillen homology, and $\TAQ_\mc{O}$ will denote topological Andr\'e-Quillen cohomology (its $R$-linear dual).  In the case where $\mc{O} = \Comm_R$, the (reduced) commutative operad in $\Mod_R$, we shall 
let $\TAQ^R$ (respectively $\TAQ_R$) denote the associated topological Andr\'e-Quillen homology (respectively cohomology).\footnote{This is slightly non-standard, as $\Comm$-algebras are the same thing as \emph{non-unital} commutative algebras in $\Mod_R$.  However, as we explain in Section~\ref{sec:Koszul}, the category of such is equivalent to the category of augmented commutative $R$-algebras.} 
\end{itemize}

\subsection*{Acknowledgments}
The authors benefited greatly from conversations with Greg Arone, Michael Ching, Bill Dwyer, Rosona Eldred, Sam Evans, John Francis, John Harper, Gijs Heuts, Mike Hopkins, Nick Kuhn, Jacob Lurie, Mike Mandell, Akhil Mathew, Anibal Medina, Lennart Meier, Luis Alexandre Pereira, and Yifei Zhu.  The authors are grateful to Norihiko Minami for encouraging this submission to these conference proceedings, honoring the memory of Tetsusuke Ohkawa. The authors would also like to thank the referee for his/her many useful comments and corrections.  Both authors were supported by grants from the NSF.

\section{Models of ``unstable homotopy theory''}\label{sec:general}

The approach to unstable homotopy theory we are considering fits into a general context, which we will now describe.

\subsection*{Stable homotopy theories}

As Quillen points out in \cite{HA}, any pointed model category $\mc{C}$ comes equipped with a notion of suspension $\Sigma_{\mc{C}}$ and loops $\Omega_\mc{C}$, given by
\begin{gather*}
\Sigma_\mc{C} X = \hocolim(* \leftarrow X \rightarrow *), \\
\Omega_\mc{C} X = \holim(* \rightarrow X \leftarrow *).
\end{gather*} 
This gives the notion of a category $\Sp(\mc{C})$ of spectra in $\mc{C}$.  With hypotheses on $\mc{C}$, and a suitable notion of stable equivalence (see, for example, \cite{Schwede}, \cite{Hovey}), $\Sp(\mc{C})$ is a model for the \emph{stabilization} of $\mc{C}$ (in the sense of \cite{Lurie}).
There are adjoint functors
\begin{equation}\label{eq:sigmainftyomegainfty}
 \Sigma^\infty_\mc{C}: \Ho(\mc{C}) \leftrightarrows \Ho(\Sp(\mc{C})): \Omega^\infty_{\mc{C}}.
\end{equation}
We regard $\Ho(\mc{C})$ as the unstable homotopy theory of $\mc{C}$, and $\Ho(\Sp(\mc{C}))$ as the stable homotopy theory of $\mc{C}$.  

\subsection*{The fundamental question}

Typically, the unstable homotopy theory is \emph{more complicated} than the stable homotopy theory.  One would therefore like to think that an unstable homotopy type is a stable homotopy type with extra structure.  More specifically:
\begin{ques}\label{ques:fund}
Is there an algebraic structure ``?'' on $\Sp(\mc{C})$ and functors:
$$ \mf{A}: \Ho(\mc{C}) \leftrightarrows \Ho(\Alg_?(\Sp(\mc{C}))) : \mf{E} $$
so that $X \simeq \mf{E} \mf{A} (X)$ (natural isomorphism in the homotopy category)? 
\end{ques}
If so, we say that $?$-algebras model the unstable homotopy types of $\mc{C}$.

\begin{rmk}$\quad$
\begin{enumerate}
\item Often, one must restrict attention to certain subcategories of $\Ho(\mc{C})$, $\Ho(\Alg_?)$ to get something like this (e.g. $1$-connected rational unstable homotopy types).

\item One can hope for more: is $\mf{A}$ fully faithful?  Can we then
  characterize the essential image?

\item When $(\mf{A},\mf{E})$ form  an adjoint pair, we can say
  something sharper: in this case, there is always a \emph{canonical}
  equivalence between the full subcategories
\[
\Ho\bigl\{\text{$X\in \mc{C}$ s.t.\ $X\simeq \mf{E}\mf{A}(X)
  $}\bigr\} \simeq 
\Ho\bigl\{\text{$A\in
  \Alg_{?}(\Sp(\mc{C}))$ s.t.\ 
    $A\simeq \mf{A}\mf{E}(A)$}\bigr\}.
\]
This identifies both the ``good'' subcategory of $\Ho(\mc{C})$ and its
essential image under $\mf{A}$, and shows that $\mf{A}$ is fully
faithful on this subcategory.
\end{enumerate}  
\end{rmk}

\begin{ex}
In the case of $\mc{C} = (\Top_*)_\QQ$ --- rational pointed spaces --- the stabilization is rational spectra $\Sp_\QQ$.  We have
$$ \Ho(\Sp_\QQ) \simeq \Ho(\mr{Ch}_\QQ), $$
where $\mr{Ch}_\QQ$ denotes rational $\ZZ$-graded chain complexes.  In this context Quillen's work provides two answers to Question~\ref{ques:fund}: the algebraic structure can be taken to be either commutative coalgebras or Lie algebras.
\end{ex}

\subsection*{Homotopy descent}

The theory of homotopy decent of Hess \cite{Hess} and Lurie \cite{Lurie} 
(see also \cite{AroneChing})
provides a canonical candidate answer to Question~\ref{ques:fund}.   Namely, the adjunction~(\ref{eq:sigmainftyomegainfty}) gives rise to a comonad $\Sigma^\infty_{\mc{C}} \Omega^\infty_{\mc{C}}$ on $\Sp(\mc{C})$, and for any $X \in \mc{C}$, the spectrum $ \Sigma^\infty_{\mc{C}} X$ is a coalgebra for this comonad.\footnote{One should regard this coalgebra structure as ``descent data''.}  Thus one can regard the functor $\Sigma^\infty_\mc{C}$ as refining to a functor
$$ \mf{A}: \Ho(\mc{C}) \rightarrow \Ho(\Coalg_{\Sigma^\infty_\mc{C}\Omega^\infty_\mc{C}}). $$
Asking for this to be an equivalence is asking for the adjunction to
be ``comonadic''.  It is typically only reasonable to expect that one
gets an equivalence between suitable subcategories of these two
categories.  
Even then, this may be of little use if there is no
explicit understanding of what it means to be a
$\Sigma^\infty_{\mc{C}} \Omega^\infty_{\mc{C}}$-coalgebra. 

\begin{ex}
Suppose that $\mc{C} = \Top_*$, the category of pointed spaces.  Then there is always a map
\begin{equation}\label{eq:Qcomp}
 X \rightarrow C(\Omega^\infty, \Sigma^\infty \Omega^\infty, \Sigma^\infty X)
 \end{equation}
where $C(-,-,-)$ denote the comonadic cobar construction.  Explicitly, 
$$ C(\Omega^\infty, \Sigma^\infty \Omega^\infty, \Sigma^\infty X) = \Tot (QX \Rightarrow QQX \Rrightarrow \cdots ), $$
the Bousfield-Kan $Q$-completion of $X$.  It follows that the map
(\ref{eq:Qcomp}) is an equivalence for $X$ nilpotent, and for
nilpotent spaces the unstable homotopy type can be recovered from the
$\Sigma^\infty \Omega^\infty$-comonad structure on $\Sigma^\infty X$.
But what does it mean explicitly to endow a spectrum with a
$\Sigma^\infty \Omega^\infty$-coalgebra structure?  This seems to be a
difficult question, but Arone, Klein, Heuts, and others have partial
information (see \cite{Klein}, \cite{Heuts1}). Rationally, however,
$\Sigma^\infty \Omega^\infty$ is equivalent (on connected spaces) to
the free commutative 
coalgebra functor, and coalgebras for this comonad are therefore
rationally equivalent to commutative coalgebras.   
\end{ex}

\section{Koszul duality}\label{sec:Koszul}

The equivalence
$$ \Ho(\mr{DG}\Coalg^{\ge 2}_\QQ) \simeq \Ho(\mr{DG}\Lie_\QQ^{\ge 1}) $$
mentioned in the introduction is an instance of \emph{Koszul duality} \cite{GinzburgKapranov}, \cite{GetzlerJones}, \cite{FrancisGaitsgory}, \cite{Fresse}, \cite{AyalaFrancis}, \cite{Lurie}, \cite{ChingHarper}.
In this section we will attempt to summarize the current state of affairs to the best of our abilities.

Let $R$ be a commutative ring spectrum, and let $\mc{O}$ be an operad in $\Mod_R$.  \emph{All operads $\mc{O}$ in this paper are assumed to be {\bf reduced}}: $\mc{O}_0 = *$ and $\mc{O}_1 = R$.

We shall let $\Alg_\mc{O} = \Alg_{\mc{O}}(\Mod_R)$ denote the category of $\mc{O}$-algebras.  An equivalence of $\mc{O}$-algebras is a map of $\mc{O}$-algebras whose underlying map of spectra is an equivalence.\footnote{We refer the reader to \cite{HarperHess} for a thorough treatment of the homotopy theory of $\mc{O}$-algebras suitable for our level of generality.  We advise the reader that some of the technical details in this reference are correctly dealt with in \cite{PereiraOperad}, \cite{KuhnPereira}.}  Note that since the operad $\mc{O}$ is reduced, the category $\Alg_\mc{O}$ is pointed, with $*$ serving as both the initial and terminal object.
There is a free-forgetful adjunction
$$ \mc{F}_\mc{O}: \Mod_R \leftrightarrows \Alg_\mc{O}: \mc{U} $$
where
\begin{equation}\label{eq:FO}
\mc{F}_\mc{O}(X) = \bigvee_i \left( \mc{O}_i \wedge_{R} X^{\wedge_R i} \right)_{\Sigma_i}
\end{equation}
is the free $\mc{O}$-algebra generated by $X$.  We shall abusively also use $\mc{F}_{\mc{O}}$ to denote the associated monad on $\Mod_R$, so that
$\mc{O}$-algebras are the same thing as $\mc{F}_\mc{O}$-algebras:
$$ \Alg_\mc{O} \simeq \Alg_{\mc{F}_\mc{O}}. $$

\subsection*{Topological Andr\'e-Quillen homology}

Because $\mc{O}$ is reduced, there is a natural transformation of monads
$$ \epsilon: \mc{F}_{\mc{O}} \rightarrow \Id. $$
For $A$ an $\mc{O}$-algebra, its module of \emph{indecomposables} $QA$ 
is defined to be the coequalizer of $\epsilon$ and the $\mc{F}_{\mc{O}}$-algebra structure map:
$$ \mc{F}_{\mc{O}}(A) \rightrightarrows A \rightarrow QA. $$
The functor $Q$ has a right adjoint
$$ Q : \Alg_\mc{O} \leftrightarrows \Mod_R : \mr{triv} $$
where, for an $R$-module $X$, the $\mc{O}$-algebra $\mr{triv} X$ is given by endowing $X$ with $\mc{O}$-algebra structure maps:
\begin{align*}
\mc{O}_1 \wedge_R X & = R \wedge_R X \xrightarrow{\approx} X, \\
\mc{O}_n \wedge_R X^{n} & \xrightarrow{\ast} X, \qquad n \ne 1. 
\end{align*}
The \emph{topological Andr\'e-Quillen homology} of $A$ is defined to be the left derived functor
$$ \TAQ^{\mc{O}}(A) := \mb{L}Q A. $$
It is effectively computed as the realization of the monadic bar construction:
$$ \TAQ^\mc{O}(A) \simeq B(\Id, \mc{F}_\mc{O}, A). $$
The Topological Andr\'e-Quillen cohomology is defined to be the $R$-linear dual of $\TAQ^{\mc{O}}$:
$$ \TAQ_\mc{O}(A) := \TAQ^\mc{O}(A)^\vee. $$

Suppose $R= Hk$ is the Eilenberg-MacLane spectrum associated to a $\QQ$-algebra $k$, $\mc{O}$ is the commutative operad (see Example~\ref{ex:comm} below), and $A$ is the Eilenberg-MacLane $\mc{O}$-algebra associated to an ordinary augmented commutative $k$-algebra. Then we can regard $\TAQ^\mc{O}$ as being an object of the derived category of $k$ under the equivalence
$$ \Ho(\Mod_{Hk}) \simeq \Ho(\mr{Ch}_k) $$
and we recover classical Andr\'e-Quillen homology.  Basterra defined $\TAQ$ for commutative $R$-algebras for arbitrary commutative ring spectra $R$, and showed that the monadic bar construction gives a formula for it \cite{Basterra}.  The case of general topological operads was introduced in \cite{BasterraMandell}; this work was extended to the setting of spectral operads in \cite{Harper} (see also  \cite{GoerssHopkins}).  

The important properties of $\TAQ^{\mc{O}}$ are:
\begin{enumerate}
\item $\TAQ^{\mc{O}}$ is excisive --- it takes homotopy pushouts of $\mc{O}$-algebras to homotopy pullbacks of $R$-modules (which are the same as homotopy pushouts in this case),
\item $\TAQ^{\mc{O}}(\mc{F}_\mc{O}(X)) \simeq X$ --- this is a consequence of the fact that $Q\mc{F}_{\mc{O}}X \approx X$.
\end{enumerate}
(1) and (2) above imply that if $A$ is built out of free $\mc{O}$-algebra cells, $\TAQ^{\mc{O}}(A)$ is built out of $R$-module cells in the same dimensions.  In this way, $\TAQ$ provides information on the ``cell structure'' of an $\mc{O}$-algebra.

\begin{ex}\label{ex:comm}
The (reduced) commutative operad $\mr{Comm} = \mr{Comm}_R$ is given by
$$ \mr{Comm}_i = 
\begin{cases}
*, & i = 0, \\
R, & i \ge 1.
\end{cases} $$
A $\mr{Comm}_R$-algebra is a \emph{non-unital} commutative $R$-algebra.  The category of non-unital commutative $R$-algebras is equivalent to the category of augmented commutative $R$-algebras:
$$ \Alg_{\mr{Comm}_R} \simeq (\Alg_R)_{/R}. $$
Given an augmented commutative $R$-algebra $A$, the augmentation ideal $IA$ given by the fiber
$$ IA \rightarrow A \xrightarrow{\epsilon} R $$
is the associated non-unital commutative algebra.  In this setting, we have
$$ \TAQ^{\mr{Comm}_R}(IA) \simeq \TAQ^R(A) $$
where $\TAQ^R(-)$ is the $\TAQ$ of \cite{Basterra}.
\end{ex}

\subsection*{The stable homotopy theory of $\mc{O}$-algebras}

The following theorem was first proven in the context of simplicial commutative rings in \cite{Schwede}, in the context of $R$ arbitrary and $\mc{O} = \mr{Comm}$ in \cite{BasterraMcCarthy} and \cite{BasterraMandell}, and $R$ and $\mc{O}$ arbitrary in \cite{Pereira1} (see also \cite{FrancisGaitsgory}, \cite[Thm.~7.3.4.13]{Lurie}).

\begin{thm}
There is an equivalence of categories
$$ \Ho(\Sp(\Alg_\mc{O})) \simeq \Ho(\Mod_R). $$
Under this equivalence, the functors
$$ \Sigma^\infty_{\Alg_\mc{O}}
: \Ho(\Alg_\mc{O}) \leftrightarrows \Ho(\Mod_R): \Omega^\infty_{\Alg_\mc{O}} $$
are given by
\begin{align*}
\Sigma^\infty_{\Alg_\mc{O}}A & \simeq \TAQ^\mc{O}(A), \\
\Omega^\infty_{\Alg_\mc{O}} X & \simeq \mr{triv} X. 
\end{align*}
\end{thm}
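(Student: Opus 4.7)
The strategy is to identify $\mr{triv}: \Mod_R \to \Alg_\mc{O}$ with $\Omega^\infty_{\Alg_\mc{O}}$ after lifting it to a functor $\Mod_R \to \Sp(\Alg_\mc{O})$, and then to deduce $\Sigma^\infty_{\Alg_\mc{O}} \simeq \TAQ^\mc{O}$ by the uniqueness of adjoints, using the already-constructed adjunction $(Q, \mr{triv})$.

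First I would verify that $\mr{triv}$ is infinitely deloopable in $\Alg_\mc{O}$: for any $X \in \Mod_R$,
\[
\Omega_{\Alg_\mc{O}} \mr{triv}(X) \simeq \mr{triv}(\Omega X).
\]
This follows because the forgetful functor $\mc{U}: \Alg_\mc{O} \to \Mod_R$, being right adjoint to $\mc{F}_\mc{O}$, preserves limits --- so the underlying $R$-module of $\Omega_{\Alg_\mc{O}}\mr{triv}(X)$ is $\Omega X$ --- while the induced structure maps $\mc{O}_n \wedge_R (\Omega X)^{\wedge_R n} \to \Omega X$ factor through the trivial action on $\mr{triv}(X)$ and are therefore null for $n \geq 2$. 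This lets me define $\mf{T}: \Mod_R \to \Sp(\Alg_\mc{O})$ by $\mf{T}(X) := \{\mr{triv}(\Sigma^n X)\}_n$, with structure equivalences furnished by the display above.

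Next I would show $\mf{T}$ is an equivalence of homotopy categories. The candidate inverse sends a spectrum object $\{A_n\}$ to the underlying $R$-module spectrum $\{\mc{U} A_n\}$, and the issue is essential surjectivity: every spectrum object in $\Alg_\mc{O}$ must be equivalent to one of the form $\mf{T}(X)$. The cleanest route invokes property (1): $\TAQ^\mc{O}: \Alg_\mc{O} \to \Mod_R$ is reduced and excisive with values in a stable category, so by the universal property of stabilization it factors through $\Sp(\Alg_\mc{O})$ via some $\widetilde{\TAQ}^\mc{O}: \Sp(\Alg_\mc{O}) \to \Mod_R$. One then verifies that $\mf{T}$ and $\widetilde{\TAQ}^\mc{O}$ are mutually inverse by checking the composites on free algebras: property (2) gives $\TAQ^\mc{O}(\mc{F}_\mc{O}(X)) \simeq X$, which matches the value of $\Sigma^\infty_{\Alg_\mc{O}} \mc{F}_\mc{O}(X)$ coming from $\mf{T}$, because cells of a free $\mc{O}$-algebra correspond under excision to cells of the generating $R$-module. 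Since suspension spectra of free $\mc{O}$-algebras generate $\Sp(\Alg_\mc{O})$, this suffices.

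Once $\Omega^\infty_{\Alg_\mc{O}} \simeq \mr{triv}$ is established under this equivalence, the formula $\Sigma^\infty_{\Alg_\mc{O}} \simeq \TAQ^\mc{O}$ is automatic: both are left adjoint to $\mr{triv}$. The main obstacle is the essential surjectivity step. Any honest implementation --- whether as a spectral Eckmann--Hilton argument showing that $\Omega^k_{\Alg_\mc{O}} A_{n+k}$ is trivializable as $k \to \infty$, or as the universal-property argument sketched above --- requires careful control of the homotopy theory of $\mc{O}$-algebras, in particular the cofibrancy hypotheses and convergence of the monadic bar construction $B(\Id, \mc{F}_\mc{O}, A)$ computing $\TAQ^\mc{O}(A)$, as developed in Harper--Hess and refined by Pereira and Kuhn--Pereira.
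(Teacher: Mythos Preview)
The paper does not actually give a proof of this theorem. It is stated as a result from the literature, with attribution to Schwede (simplicial commutative rings), Basterra--McCarthy and Basterra--Mandell (the commutative operad), and Pereira (general $\mc{O}$), with additional references to Francis--Gaitsgory and Lurie. So there is no ``paper's own proof'' against which to compare your proposal.

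That said, your sketch is broadly in line with the arguments in the cited sources, particularly the Basterra--Mandell and Pereira treatments. The identification of $\Omega^\infty_{\Alg_\mc{O}}$ with $\mr{triv}$ via the observation that $\mr{triv}$ commutes with loops, and the subsequent identification of $\Sigma^\infty_{\Alg_\mc{O}}$ with $\TAQ^\mc{O}$ by uniqueness of adjoints, is exactly the standard approach. Your acknowledgment that essential surjectivity is the crux --- that every spectrum object in $\Alg_\mc{O}$ has trivial algebra structure --- is correct, and the two routes you name (an Eckmann--Hilton style argument that iterated looping kills the nontrivial operad action, or the universal property of stabilization applied to the excisive functor $\TAQ^\mc{O}$) are both ones that appear in the literature. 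One small caution: the claim that ``suspension spectra of free $\mc{O}$-algebras generate $\Sp(\Alg_\mc{O})$'' deserves a sentence of justification, and the verification that $\widetilde{\TAQ}^\mc{O} \circ \mf{T} \simeq \Id$ on $\Mod_R$ is not quite the same computation as $\TAQ^\mc{O}(\mc{F}_\mc{O}(X)) \simeq X$; you should check directly that $\TAQ^\mc{O}(\mr{triv}\, X) \to X$ becomes an equivalence after stabilizing in $X$, or equivalently that the connectivity of the fiber grows with suspension.
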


The adjunction above extends to derived mapping spaces, and gives the following (compare with \cite{Basterra}).

\begin{cor}\label{cor:TAQ}
The spaces of the $\TAQ_{\mc{O}}$-spectrum are given by
$$ \Omega^\infty \Sigma^{n} \TAQ_\mc{O}(A) \simeq \ul{\Alg}_{\mc{O}} (A, \mr{triv} \Sigma^n R). $$
\end{cor}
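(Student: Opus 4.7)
The plan is to chain together three natural identifications, one for each ingredient in the statement. First I would unpack the left-hand side using the definition $\TAQ_\mc{O}(A) := F_R(\TAQ^\mc{O}(A), R)$ together with the standard description of the spaces of a mapping spectrum of $R$-modules: for any $M \in \Mod_R$,
$$\Omega^\infty \Sigma^n F_R(M, R) \simeq \Omega^\infty F_R(M, \Sigma^n R) \simeq \ul{\Mod}_R(M, \Sigma^n R).$$
Applied to $M = \TAQ^\mc{O}(A)$, this rewrites the left-hand side as $\ul{\Mod}_R(\TAQ^\mc{O}(A), \Sigma^n R)$.

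Next I would invoke the preceding theorem, which provides the identifications $\Sigma^\infty_{\Alg_\mc{O}} A \simeq \TAQ^\mc{O}(A)$ and $\Omega^\infty_{\Alg_\mc{O}} X \simeq \mr{triv}\, X$. Combining these with the adjunction $(\Sigma^\infty_{\Alg_\mc{O}}, \Omega^\infty_{\Alg_\mc{O}})$ promoted to the level of derived mapping spaces, as asserted in the sentence preceding the corollary, yields
$$\ul{\Mod}_R(\TAQ^\mc{O}(A), \Sigma^n R) \simeq \ul{\Alg}_\mc{O}(A, \mr{triv}\, \Sigma^n R),$$
which is exactly the right-hand side. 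Putting the two equivalences together gives the claimed formula.

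The main obstacle, in a concrete model-categorical setup, is justifying that the adjunction $(\Sigma^\infty_{\Alg_\mc{O}}, \Omega^\infty_{\Alg_\mc{O}})$ genuinely refines from the level of homotopy categories (where it is granted by the theorem) to an equivalence of derived mapping spaces. In the frameworks of Harper--Hess and Basterra--Mandell this is verified by modelling $\Sigma^\infty_{\Alg_\mc{O}}$ by the monadic bar construction $B(\Id, \mc{F}_\mc{O}, -)$, checking that both functors form a simplicially (indeed spectrally) enriched Quillen pair, and concluding that the adjunction bijection lifts to a weak equivalence of enriched hom-objects. In the $\infty$-categorical reformulation of Lurie this refinement is automatic, since any adjunction of $\infty$-categories induces equivalences on mapping spaces. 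Once the enriched form of the adjunction is available, the corollary is nothing more than the two-line chain of equivalences above.
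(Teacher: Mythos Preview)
Your proposal is correct and follows essentially the same route as the paper: both arguments chain the enriched $(\TAQ^\mc{O},\mr{triv})$ adjunction with the identification of the spaces of the dual spectrum $\TAQ_\mc{O}(A)=F_R(\TAQ^\mc{O}(A),R)$ as $R$-module mapping spaces. The paper simply runs the chain of equivalences from the right-hand side to the left (and inserts $\ul{\Mod}_R(R,\Sigma^n\TAQ_\mc{O}(A))$ as an intermediate step), whereas you run it left to right; your added remarks on why the adjunction lifts to derived mapping spaces are a welcome elaboration of what the paper asserts in the sentence preceding the corollary.
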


\begin{proof}
We have
\begin{align*}
\ul{\Alg}_\mc{O}(A, \mr{triv} \Sigma^{n} R) & \simeq \ul{\Mod}_R(\TAQ^\mc{O}(A), \Sigma^{n} R) \\
& \simeq \ul{\Mod}_R(R, \Sigma^n \TAQ_\mc{O}(A)) \\
& \simeq \Omega^\infty \Sigma^n \TAQ_\mc{O}(A).
\end{align*}
\end{proof}

\subsection*{Divided power coalgebras}

Question~\ref{ques:fund} clearly has a tautological answer when
$\mc{C}=\Alg_{\mc{O}}$: it  consists of $\mc{O}$-algebras in
$\Sp(\mc{C})\simeq\Mod_R$. 
However, this is \emph{not} the canonical spectral algebra model given
by the theory of homotopy descent of Section~\ref{sec:general} --- we
should be considering the $\Sigma^\infty_{\Alg_\mc{O}} 
\Omega^\infty_{\Alg_\mc{O}}$-coalgebra $\TAQ^{\mc{O}}(A)$ as a candidate spectral algebra model for $A$.

But what does it mean to be a $\Sigma^\infty_{\Alg_\mc{O}} \Omega^\infty_{\Alg_\mc{O}}$-coalgebra?  The answer, according to \cite{FrancisGaitsgory} and \cite{ChingHarper}, is a \emph{divided power coalgebra over the Koszul dual $B\mc{O}$}.  Let us unpack what this means.  

For any symmetric sequence $\mc{Y} = \{ \mc{Y}_i \}$ of $R$-modules, one can use (\ref{eq:FO}) to define a functor 
$$ \mc{F}_\mc{Y}: \Mod_R \rightarrow \Mod_R. $$
The category of symmetric sequences of $R$-modules possesses a monoidal structure $\circ$ called the composition product, such that 
$$ \mc{F}_{\mc{Y}} \circ \mc{F}_\mc{Z} = \mc{F}_{\mc{Y} \circ \mc{Z}}. $$
The monoids associated to the composition product are precisely the
operads in $\Mod_R$.  The unit for this monoidal structure is the
symmetric sequence $1_R$ with  
$$ (1_R)_i = 
\begin{cases}
R, & i = 1, \\
*, & i \neq 1.
\end{cases}
$$
Every reduced operad $\mc{O}$ in $\Mod_R$ is augmented over $1_R$.  The \emph{Koszul dual} of $\mc{O}$ is the symmetric sequence obtained by forming the bar construction with respect to the composition product
$$ B\mc{O} := B(1_R, \mc{O}, 1_R) = |1 \Leftarrow \mc{O} \Lleftarrow \mc{O} \circ \mc{O} \cdots |. $$
Ching showed that $B\mc{O}$ admits a cooperad structure \cite{Ching}. 

\begin{ex}\label{ex:Lie}
Suppose $R = H\QQ$, so we can replace $\Mod_R$ with $\mr{Ch}_{\QQ}$.  Take $\mc{O} = \mr{Lie}_\QQ$, the Lie operad.  Then we have $B\mr{Lie}_\QQ = s\mr{Comm}_\QQ^\vee$ the suspension of the commutative cooperad \cite{GinzburgKapranov}, \cite{Ching}.\footnote{In general, for a (co)operad $\mc{O}$, the \emph{suspension} of the (co)operad $s\mc{O}$ is a new (co)operad for which $(s\mc{O})_i \simeq \Sigma^{i-1} \mc{O}_i$ (nonequivaraintly), with the property that an $s\mc{O}$-(co)algebra structure on $X$ is the same thing as an $\mc{O}$-(co)algebra structure on $\Sigma X$ \cite{MarklSniderStasheff}, \cite{AroneKankaanrinta}.} 
\end{ex}

\begin{ex}\label{ex:spectrallie}
In the case of $R = S$, the sphere spectrum, and $\mc{O}$ the commutative operad, Ching showed that
$$ B\mr{Comm}_S \simeq (\partial_* \Id_{\Top_*})^\vee $$
the duals of the Goodwillie derivatives of the identity functor on $\Top_*$\footnote{This identification used the computation of $\partial_* \Id_{\Top_*}$ of \cite{Johnson} and \cite{AroneMahowald} as input.} \cite{Ching}.  He also showed that with respect to the resulting operad structure on $\partial_*\Id_{\Top_*}$, we have 
$$ sH_* \partial_* \Id_{\Top_*} \cong \Lie_\ZZ. $$
As such, we will \emph{define} the shifted spectral Lie operad as
$$ s^{-1} \Lie_S := \partial_*\Id_{\Top_*}. $$
\end{ex}

 Following \cite{ChingHarper}, we have for an $R$-module $X$:
\begin{align*}
\Sigma^\infty_{\Alg_\mc{O}} \Omega^\infty_{\Alg_\mc{O}} X & \simeq \TAQ^\mc{O}(\mr{triv} X) \\
& \simeq B(\Id, \mc{F}_\mc{O}, \mr{triv} X) \\
& \simeq \mc{F}_{B\mc{O}} X.
\end{align*}
If $R$ and $\mc{O}$ are connective, and $X$ is connected, we have 
$$
\mc{F}_{B\mc{O}} X \simeq \prod_i \left( B\mc{O}_i \wedge_{R} X^{\wedge_R i} \right)_{\Sigma_i}.
$$
Thus, at least on the level of the homotopy category, the data of a $\Sigma^\infty_{\Alg_\mc{O}} \Omega^\infty_{\Alg_\mc{O}}$-coalgebra $C$ corresponds to the existence of a collection of coaction maps:
$$ \psi_i: C \rightarrow \left( B\mc{O}_i \wedge_{R} C^{\wedge_R i} \right)_{\Sigma_i}. $$
The term \emph{divided power} comes from the fact that a standard
coalgebra over a cooperad consists of coaction maps into the $\Sigma_i$-fixed points rather than the $\Sigma_i$-orbits.  

The general notion of a divided power (co)algebra over a (co)operad goes back to Fresse (see\cite{Fressedp}, \cite{Fresse}).  
For a precise definition of divided power coalgebras in the present homotopy-coherent context, we refer the reader to \cite{FrancisGaitsgory}, \cite{Heuts1}.  In this language, we have functors
\begin{equation}\label{eq:Koszulduality}
\TAQ^{\mc{O}}: \Ho(\Alg_\mc{O}) \leftrightarrows \Ho(\mr{d.p.}\Coalg_{B\mc{O}}): \mf{E}.
\end{equation}

\subsection*{Instances of Koszul duality}

The following ``Koszul Duality'' theorem (a special case of a general conjecture of Francis-Gaitsgory \cite{FrancisGaitsgory}) generalizes Quillen's original theorem, as well as subsequent work in the algebraic context \cite{GinzburgKapranov}, \cite{GetzlerJones}, \cite{Fresse}, \cite{SchlessingerStasheff}.

\begin{thm}[Ching-Harper \cite{ChingHarper}]\label{thm:Koszulduality}
In the case where $R$ and $\mc{O}$ are connective, the functors (\ref{eq:Koszulduality}) restrict to give an equivalence of categories
$$ \Ho(\Alg^{\ge 1}_{\mc{O}}) \simeq \Ho(\mr{d.p.}\Coalg^{\ge 1}_{B\mc{O}}). $$
\end{thm}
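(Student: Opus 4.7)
The plan is to realize the functors $(\TAQ^{\mc{O}}, \mf{E})$ of~(\ref{eq:Koszulduality}) as the adjunction associated with the descent comonad $\Sigma^\infty_{\Alg_\mc{O}} \Omega^\infty_{\Alg_\mc{O}} \simeq \mc{F}_{B\mc{O}}$ discussed earlier in this section, and then invoke an $\infty$-categorical Barr--Beck argument to upgrade it to an equivalence on $1$-connected objects. After identifying $\mf{E}$ with the totalization of the comonadic cobar construction of $\mr{triv}$ applied to $\mc{F}_{B\mc{O}}^{\bullet+1}C$, the theorem reduces to verifying three conditions: (i) $\TAQ^{\mc{O}}$ is conservative on $\Ho(\Alg_{\mc{O}}^{\ge 1})$; (ii) the unit $A \to \mf{E}(\TAQ^{\mc{O}}(A))$ is an equivalence for every $1$-connected $\mc{O}$-algebra $A$; and dually (iii) the counit is an equivalence for every $1$-connected divided-power $B\mc{O}$-coalgebra.

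I would attack (i) and (ii) simultaneously via a cell induction. Any $1$-connected $\mc{O}$-algebra $A$ admits a CW-style presentation by attaching free cells $\mc{F}_{\mc{O}}(\Sigma^{k}R)$ with $k \ge 1$. Property~(2) of $\TAQ^{\mc{O}}$ gives $\TAQ^{\mc{O}}(\mc{F}_{\mc{O}}(\Sigma^{k} R)) \simeq \Sigma^{k} R$, and by the excisive property~(1) this propagates through cell attachments, yielding conservativity cell-by-cell. For convergence of the unit, since $\mc{O}$ and $R$ are connective, the free functor $\mc{F}_{\mc{O}}$ preserves, and on iteration improves, connectivity of $1$-connected $R$-modules; consequently the $j$th term of the monadic cobar resolution $\mc{F}_{\mc{O}}^{\bullet+1}(A)$ has connectivity growing linearly in $j$, so the $\Tot$ computing $\mf{E}(\TAQ^{\mc{O}}(A))$ converges to $A$. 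A dual argument on coalgebra cells, using the formula $\mc{F}_{B\mc{O}}$ for cofree divided-power coalgebras and the fact that on $1$-connected inputs this is a product whose factors have growing connectivity, handles~(iii).

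The main obstacle, and the technical heart of \cite{ChingHarper}, is identifying the abstractly defined divided-power $B\mc{O}$-coalgebra structure on $\TAQ^{\mc{O}}(A)$ (arising from Ching's cooperad structure on the bar construction $B\mc{O}$) with the comonadic coaction coming from $\Sigma^\infty_{\Alg_\mc{O}} \Omega^\infty_{\Alg_\mc{O}}$. Concretely, one must show that on $1$-connected objects the natural norm map $(B\mc{O}_i \wedge_R X^{\wedge_R i})_{\Sigma_i} \to (B\mc{O}_i \wedge_R X^{\wedge_R i})^{\Sigma_i}$ is well-behaved enough that the divided-power structure, built on $\Sigma_i$-orbits, is compatible in a homotopy-coherent way with the cooperad structure naturally living on $\Sigma_i$-fixed points. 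This matching has to hold coherently at the level of $\infty$-operads, and it is precisely the $1$-connectivity hypothesis on both sides that provides the connectivity estimates needed to close the argument via Barr--Beck.
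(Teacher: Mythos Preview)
The paper does not contain a proof of this theorem. It is stated as a result of Ching--Harper, with attribution in the theorem heading and a citation to \cite{ChingHarper}; this is a survey, and the theorem is quoted from the literature rather than proven. So there is no ``paper's own proof'' to compare your proposal against.

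That said, your sketch is broadly in the spirit of the actual Ching--Harper argument: the paper's surrounding discussion already identifies the descent comonad $\Sigma^\infty_{\Alg_\mc{O}}\Omega^\infty_{\Alg_\mc{O}}$ with $\mc{F}_{B\mc{O}}$, and the proof in \cite{ChingHarper} does proceed by establishing derived (co)monadicity via connectivity estimates (their ``homotopical analysis of TQ-completion'' and its dual). A couple of minor inaccuracies in your write-up: the hypothesis is that the objects are \emph{connected} (in the paper's conventions $X^{\ge n}$ denotes the $(n-1)$-connected cover, so $\ge 1$ means $0$-connected), not $1$-connected as you repeatedly say. Also, your final paragraph conflates two separate issues: the ``divided power'' terminology simply records that the coaction maps land in $\Sigma_i$-orbits rather than fixed points, and this is exactly what the comonad $\mc{F}_{B\mc{O}}$ produces --- there is no norm-map comparison needed to make the coalgebra structures match. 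The genuine coherence problem (aligning Ching's point-set cooperad structure on $B\mc{O}$ with the abstract comonadic one) is real, but is orthogonal to the norm map.
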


\begin{ex}\label{ex:LieKoszul}
Returning to the context of $R = H\QQ$, and $\mc{O} = \mr{Lie}_\QQ$ of Example~\ref{ex:Lie}, Theorem~\ref{thm:Koszulduality} recovers Quillen's original theorem:
$$ \Ho(\Alg^{\ge 1}_{\mr{Lie}_\QQ}) \simeq \Ho(\Coalg^{\ge 1}_{s\mr{Comm}^\vee_\QQ}) \simeq \Ho(\Coalg^{\ge 2}_{\mr{Comm}^\vee_\QQ}). $$
Note that we have not mentioned divided powers.  This is because, rationally, coinvariants and invariants with respect to finite groups are isomorphic via the norm map, so \emph{every} rational coalgebra is a divided power coalgebra.
\end{ex}

\section{Models of rational and $p$-adic homotopy theory}\label{sec:models}

In this section we will return to Quillen-Sullivan theory, and a $p$-adic analog studied by Kriz, Goerss, Mandell, and Dwyer-Hopkins.

\subsection*{Rational homotopy theory, again}

We begin by recasting Quillen-Sullivan theory into the language of spectral algebra.  This in some sense defeats the original purpose of the theory --- which was to encode rational homotopy theory in an \emph{algebraic} category where you can literally write down the models in terms of generators, relations and differentials, but our recasting of the theory will motivate what follows.

Consider the functors
\begin{align*}
H\QQ \wedge -: \Ho((\Top_*)_\QQ) & \rightarrow \Ho(\Coalg_{\mr{Comm}_{H\QQ}^\vee}), \\
H\QQ^{-}: \Ho((\Top_*)_\QQ)^\op & \rightarrow \Ho(\Alg_{\mr{Comm}_{H\QQ}}).
\end{align*}
Essentially, for $X \in \Top_*$, $H\QQ \wedge X$ is a spectral model for the reduced chains on $X$, and $H\QQ^X$ is a spectral model for the reduced cochains of $X$.  The commutative coalgebra/algebra structures come from the diagonal
$$ \Delta: X \rightarrow X \wedge X. $$
The two functors are related by $H\QQ^X = (H\QQ \wedge X)^\vee$.
If $X$ is of finite type, there is no loss of information in using the cochains $H\QQ^X$.  There is a definite advantage to working with algebras rather than coalgebras if you like model categories.\footnote{For suitable monads $\mb{M}$ on cofibrantly generated model categories $\mc{C}$ it is typically straightforward to place induced model structures on $\Alg_{\mb{M}}$ \cite{Hirschhorn} --- coalgebras over comonads are more difficult to handle.  This may be an instance where there is a definite advantage in working with $\infty$-categories.  However, we also point out that Hess-Shipley \cite{HessShipley} give a useful framework which in practice can often give model category structures on categories of coalgebras over comonads.}

Quillen's theorem implies these functors restrict to give  equivalences of categories:
\begin{align*}
H\QQ \wedge (-): \Ho(\Top^{\ge 2}_\QQ) & \xrightarrow{\simeq} \Ho(\Coalg^{\ge 2}_{\mr{Comm}_{H\QQ}^\vee}), \\
H\QQ^{(-)}: \Ho(\Top^{\ge 2, \mr{f.t.}}_\QQ) & \xrightarrow{\simeq} \Ho(\Alg^{\le -2, \mr{f.t.}}_{\mr{Comm}_{H\QQ}}).
\end{align*}
His Lie algebra models then come from applying Koszul duality (see Example~\ref{ex:LieKoszul}).

\subsection*{$p$-adic homotopy theory}

Fix a prime $p$.
Analogous approaches to $p$-adic homotopy theory using cosimplicial commutative algebras, simplicial commutative coalgebras, $E_\infty$-algebras in chain complexes, and commutative algebras in spectra were developed respectively by Kriz \cite{Kriz}, Goerss \cite{Goerss}, Mandell \cite{Mandell}, and Dwyer-Hopkins (see \cite{Mandell}).  We will focus on the spectral algebra setting, which is closely tied to Mandell's algebraic setting.

The basic idea in these approaches is to replace the role of $H\QQ$ with the role of $H\bar\FF_p$.  Consider the cochain functor with $\bar\FF_p$-coefficients on $p$-complete spaces:
$$ H\bar\FF_p^{(-)}:  \Ho((\Top_*)_{\ZZ_p})^\op \rightarrow \Ho(\Alg_{\mr{Comm}_{H\bar\FF_p}}). $$

\begin{thm}[Mandell \cite{Mandell}]
The $\bar\FF_p$-cochains functor gives a fully faithful embedding
\begin{equation}\label{eq:Mandell}
H\bar\FF_p^{(-)}:  \Ho((\Top_*)^{\mr{nilp},\mr{f.t.}}_{\ZZ_p})^\op \hookrightarrow \Ho(\Alg_{\mr{Comm}_{H\bar\FF_p}}).
\end{equation}
of the homotopy category of nilpotent $p$-complete spaces of finite type into the homotopy category of commutative $H\bar\FF_p$-algebras.
\end{thm}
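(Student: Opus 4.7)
The plan is to express full faithfulness as the statement that the unit of a natural contravariant adjunction is an equivalence, reduce this unit equivalence to the case of Eilenberg-MacLane spaces by Postnikov induction, and then verify the base case by setting up an $E_\infty$-algebra obstruction theory whose coefficient groups are controlled by the Steenrod-plus-power-operation structure on $H\br\FF_p^{K(V,n)}$.

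Passing to the equivalent category of augmented commutative $H\br\FF_p$-algebras (see Example~\ref{ex:comm}), I would define the ``$\br\FF_p$-points'' functor $\Psi(A)$ to be the derived space of augmented $H\br\FF_p$-algebra maps from $A$ to $H\br\FF_p$. Using that $\Phi(X) := H\br\FF_p^X$, augmented via restriction along the basepoint of $X$, is the cotensor of $H\br\FF_p$ by $X$ in augmented commutative $H\br\FF_p$-algebras, one obtains a natural equivalence
\[
\ul{\Top}_*(X, \Psi(A)) \simeq \ul{\Alg}^{\mr{aug}}_{H\br\FF_p}(A, \Phi(X)),
\]
which exhibits $(\Phi, \Psi)$ as a contravariant adjoint pair. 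Full faithfulness of $\Phi$ on the claimed subcategory is then equivalent to the unit $\eta_X : X \to \Psi \Phi(X)$ being a weak equivalence for every nilpotent $p$-complete space $X$ of finite type.

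Next, such a space admits a principal refinement of its Postnikov tower whose fibers may be taken to be Eilenberg-MacLane spaces $K(V, n)$ with $V$ a finitely generated $\ZZ_p$-module. Under these hypotheses $\Phi$ sends each principal fibration $K(V, n) \to X_n \to X_{n-1}$ to a homotopy pushout square of augmented commutative $H\br\FF_p$-algebras --- a convergent Eilenberg-Moore / K\"unneth statement that uses $p$-completeness and finite type in an essential way --- while $\Psi$, being corepresentable, carries this pushout back to a homotopy pullback. An inductive argument using these two facts, starting from the trivial case $X = *$, reduces the check of $\eta_X$ to the Eilenberg-MacLane spaces.

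The hard part will be the Eilenberg-MacLane base case, namely showing
\[
K(V, n) \xrightarrow{\ \simeq\ } \Psi\bigl(H\br\FF_p^{K(V,n)}\bigr).
\]
The essential input is a precise description of $H\br\FF_p^{K(V,n)}$ as an augmented $E_\infty$-$H\br\FF_p$-algebra: its homotopy is the mod $p$ cohomology of $K(V,n)$, carrying both the classical Steenrod operations and the additional (Dyer-Lashof type) power operations encoded by the $E_\infty$-structure. I would set up an obstruction theory for $E_\infty$-algebra maps out of $H\br\FF_p^{K(V,n)}$ whose obstructions and difference classes live in shifted $\TAQ$-cohomology groups, and compute these via Basterra's bar spectral sequence together with Milnor's computation of $H^*(K(V,n); \FF_p)$ as an unstable module over the Steenrod algebra. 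The resulting identification is essentially an $E_\infty$-refinement of Lannes' $T$-functor calculation. This step is where algebraic closedness of $\br\FF_p$ is used (to absorb Galois obstructions to lifting cohomology automorphisms), where $p$-completeness ensures convergence of the obstruction tower, and where finite type gives the finiteness of $\TAQ$-cohomology needed to make the calculation tractable --- essentially the entire substance of the theorem lies in this last step.
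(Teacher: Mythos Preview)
The paper does not give a proof of this theorem at all: it is stated with attribution to Mandell and simply cited, followed by a remark that the functor in fact induces an equivalence on derived mapping spaces and that Mandell also identifies the essential image. So there is nothing in this paper to compare your proposal against.

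That said, your outline is a reasonable sketch of the strategy in Mandell's original paper: set up the contravariant adjunction between pointed spaces and augmented $E_\infty$-$H\br\FF_p$-algebras, reduce via a principal refinement of the Postnikov tower (using an Eilenberg--Moore argument to see that $\Phi$ takes these principal fibrations to pushouts) to the case of Eilenberg--MacLane spaces, and then handle $K(V,n)$ by an explicit analysis of its $E_\infty$-cochains. Two small corrections of emphasis: first, Mandell's treatment of the base case does not proceed via $\TAQ$-obstruction theory and the Basterra spectral sequence, but rather by identifying $C^*(K(\ZZ/p,n);\br\FF_p)$ as the cofiber of a self-map of a \emph{free} $E_\infty$-algebra encoding the relation $P^0 x = x$ (an Artin--Schreier type relation coming from the zeroth power operation); second, the role of algebraic closedness of $\br\FF_p$ is precisely that Artin--Schreier equations $y^p - y = a$ have solutions, not merely ``absorbing Galois obstructions to lifting cohomology automorphisms.'' Your framing in terms of $\TAQ$-obstructions may well be made to work, but it is not what Mandell does, and you should be aware that the substance lies in this explicit identification rather than in a generic obstruction-theory argument.
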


\begin{rmk}
Actually, the functor (\ref{eq:Mandell}) induces an equivalence on derived mapping spaces.
Mandell also computes the effective image of this functor.
\end{rmk}

\begin{rmk}\label{rem:goerss-coalgebras}
The approach of \cite{Goerss} suggests that the finite type hypothesis could be removed if one worked with $H\bar\FF_p$-coalgebras.
\end{rmk}

What goes wrong when using $H\FF_p$ instead of $H\bar\FF_p$?  Because the $\bar\FF_p$-cochains are actually defined over $\FF_p$, there is a continuous action of 
$$ \mr{Gal} := \mr{Gal}(\bar{\FF}_p/{\FF}_p) \cong \widehat{\ZZ} $$ 
on $H\bar\FF_p^X$, with homotopy fixed points:
$$ (H\bar\FF_p^X)^{h\mr{Gal}(\bar{\FF}_p/\FF_p)} \simeq H\FF_p^X. $$
It follows that for $X$, $Y$ nilpotent and of finite type, we have
\begin{align*}
\ul{\Alg}_{\mr{Comm}_{H\FF_p}}(H\FF_p^Y, H\FF_p^X) & \simeq 
\ul{\Alg}_{\mr{Comm}_{H\bar\FF_p}}(H\bar\FF_p^Y, H\bar\FF_p^X)^{h\mr{Gal}} \\
& \simeq \ul{\Top}_*(X^\wedge_p,Y^\wedge_p)^{h\mr{Gal}}.
\end{align*}
However, the action of $\mr{Gal}$ on $\ul{\Top}_*(X^{\wedge}_p,Y^{\wedge}_p)$ is trivial, so we have
\begin{align*}
\ul{\Top}_*(X^\wedge_p,Y^\wedge_p)^{h\mr{Gal}} 
& \simeq \ul{\Top}_*(X^\wedge_p,Y^\wedge_p)^{B\ZZ} \\
& \simeq L\ul{\Top}_*(X^\wedge_p,Y^\wedge_p) \qquad \text{(the free loop space).}\\
\end{align*}
In unpublished work (closely related to \cite{Mandellcochains}), Mandell has shown the same holds for $H\FF_p$ replaced by $S_p$, the $p$-adic sphere spectrum when $X$ and $Y$ are additionally assumed to be finite:
$$ \ul{\Alg}_{\mr{Comm}}(S_p^Y, S_p^X) \simeq L\ul{\Top}_*(X^\wedge_p,Y^\wedge_p). $$
In fact, Mandell has shown the integral cochains functors gives a faithful embedding of the integral homotopy category into the category of integral $E_\infty$-algebras \cite{Mandellcochains}
$$ \Ho(\Top_*^{\mr{nilp}, \mr{f.t.}})^{op} \hookrightarrow \Ho(\Alg_{E_\infty}(\mr{Ch}_{\ZZ})) $$
Medina has recently proven a related statement using $E_\infty$-coalgebras \cite{Medina}, and Blomquist-Harper have recently announced another setup using coalgebra structures on integral chains \cite{BlomquistHarper}.
In unpublished work, Mandell has a similar result for commutative $S$-algebras: the Spanier-Whitehead dual functor gives a faithful embedding:
$$ S^{(-)}: \Ho(\Top_*^{\mr{nilp}, \mr{finite}})^{op} \hookrightarrow \Ho(\Alg_{\Comm}(\Sp)). $$

\subsection*{Where are the $p$-adic Lie algebras?}

There is no known ``Lie algebra model'' for unstable $p$-adic homotopy
theory.  One of the problems is that, unlike the rational case,
commutative $H\bar\FF_p$-coalgebras do not automatically come equipped
with divided power structures, so Koszul duality does not seem to
apply (cf.\ the rational analogue of Example~\ref{ex:Lie}).  Applying
Koszul duality in the other direction, to get a ``divided power Lie 
coalgebra model'' (via a Koszul duality equivalence with commutative
algebras) is 
fruitless as well, since $\TAQ^{H\bar\FF_p}(H\bar\FF_p^X)\simeq \ast$
for any finite-type nilpotent $X$ (Thm.\ 3.4
of \cite{Mandellcochains}).  
%it is almost never the case 
%that $H\bar\FF_p^X$ is TQ-complete in the sense of \cite{HarperHess}.

One indication that one should not expect a Lie algebra model for $p$-adic homotopy types is that rationally, the composite
$$ \Ho(\Sp^{\ge 2}_\QQ) \xrightarrow{\Omega^\infty(-)} \Ho(\Top^{\ge 2}_\QQ) \xrightarrow[\simeq]{L_\QQ} \Ho(\Alg^{\ge 1}_{\mr{Lie}_\QQ}) $$
is given by 
$$ L_\QQ (\Omega^\infty Z) \simeq \mr{triv} \Sigma^{-1} Z. $$
where we give the spectrum $\Sigma^{-1} Z$ the \emph{trivial} Lie bracket.  This, strangely, means that a simply connected rational homotopy type is an infinite loop space if and only if its associated Lie algebra is equivalent to one with a trivial bracket.  There is thus a functor
\begin{equation}\label{eq:Phi0}
\Phi_0: \Ho(\Top^{\ge 2}_\QQ) \rightarrow \Ho(\Sp_\QQ)
\end{equation}
given by forgetting the Lie algebra structure on $L_\QQ$.  For a $1$-connected spectrum $Z$, we have
$$ \Phi_0 \Omega^{\infty} Z \simeq Z_\QQ, $$
i.e., we can recover the rationalization of the spectrum from its $0$th space.  It follows that rationally, simply connected infinite loop spaces have \emph{unique} deloopings.  An analogous fact does not hold for $p$-adic infinite loop spaces.

\section{$v_n$-periodic homotopy theory}\label{sec:vn}

In both the case of rational homotopy theory, and $p$-adic homotopy theory, there are notions of ``homotopy groups'' and ``homology groups''.  In the rational case, we have
\begin{align*}
\text{rational homotopy} & = \pi_*(X) \otimes \QQ, \\
\text{rational homology} & = H_*(X;\QQ).
\end{align*} 
The appropriate analogs in the $p$-adic case are 
\begin{align*}
\text{mod $p$ homotopy} & = \pi_*(X;M(p)) := [\Sigma^* M(p), X]_{\Top_*}, \\
\text{mod $p$ homology} & = H_*(X;\FF_p).
\end{align*} 
For $1$-connected spaces, a map is a rational homotopy isomorphism if and only if it is a rational homology isomorphism, and similarly, a map is a mod $p$ homotopy isomorphism if and only if it is a mod $p$ homology isomorphism.

The idea of chromatic homotopy theory is that a $p$-local homotopy type is built out of monochromatic (or $v_n$-periodic) layers, and that elements of $p$-local homotopy groups fit into periodic families of different frequencies.  The $v_n$-periodic homotopy groups isolate the elements in a particular frequency.  The associated homology theory is the $n$th Morava $K$-theory.

\subsection*{Stable $v_n$-periodic homotopy theory}

We begin with the stable picture.  $v_n$-periodic stable homotopy theory has its own notion of homotopy and homology groups.  The appropriate homology theory is the \emph{$n$th Morava $K$-theory spectrum} $K(n)$, with
$$ K(n)_* = \FF_p[v_n^{\pm}], \quad \abs{v_n} = 2(p^n-1) $$  
(for $n = 0$ we have $K(0) = H\QQ$ and $v_0 = p$).  The appropriate notion of homotopy groups are the \emph{$v_n$-periodic homotopy groups}, defined as follows.  A finite $p$-local spectrum $V$ is called \emph{type} $n$ if it is $K(n-1)$-acyclic, and not $K(n)$-acyclic.  The periodicity theorem of Hopkins-Smith \cite{HopkinsSmith} states that $V$ has an asymptotically unique \emph{$v_n$ self-map}: a $K(n)$-equivalence  
$$ v: \Sigma^k V \rightarrow V $$
(with $k > 0$ if $n > 0$).  The $v_n$-periodic homotopy groups (with coefficients in $V$) of a spectrum $Z$ are defined to be
$$ v_n^{-1} \pi_* (Z;V) := v^{-1} [\Sigma^* V, Z]_{\Sp}. $$
For $n > 0$ these groups are periodic, of period dividing $k$, the degree of the chosen self-map $v$.
Note these groups do not depend on the choice of $v_n$ self-map (by asymptotic uniqueness) but they do depend on the choice of finite type $n$ spectrum $V$.  However, for any two such spectra $V$, $V'$, it turns out that a map is a $v_n^{-1}\pi_*(-;V)$ isomorphism if and only if it is a $v_n^{-1}\pi_*(-;V')$ isomorphism.  It is straightforward to check that if we take $T(n)$ to be the ``telescope''
$$ T(n) = v_n^{-1} V := \hocolim( V \xrightarrow{v} \Sigma^{-k} V \xrightarrow{v} \Sigma^{-2k} V \xrightarrow{v} \cdots ) $$
then a $v_n^{-1}\pi_*$-isomorphism is the same thing as a $T(n)_*$-isomorphism.

For maps of spectra it can be shown that 
$$ v_n^{-1}\pi_*\text{-isomorphism} \Rightarrow K(n)_*\text{-isomorphism}. $$
Ravenel's \emph{telescope conjecture} \cite{Ravenel84} predicts the converse is true.  This is easily verified in the case of $n = 0$, and 
deep computational work of Mahowald \cite{Mahowaldbo} and Miller \cite{MillerASS} implies the conjecture is valid for $n = 1$. 
It is believed to be false for $n \ge 2$, but the problem remains open despite the valiant efforts of many researchers \cite{MRS}. 

As such, there are potentially two \emph{different} stable $v_n$-periodic categories, $\Sp_{T(n)}$ and $\Sp_{K(n)}$, corresponding to the localizations with respect to the two potentially different notions of equivalence.  $K(n)$-localization gives a functor
$$ (-)_{K(n)}: \Ho(\Sp_{T(n)}) \rightarrow \Ho(\Sp_{K(n)}). $$

\begin{rmk}
Arguably localization with respect to $T(n)$ is more fundamental, but there are no known computations of $\pi_* Z_{T(n)}$ for a finite spectrum $Z$ and $n \ge 2$ (if we had such a computation, we probably would have resolved the telescope conjecture for that prime $p$ and chromatic level $n$).  By contrast, the whole motivation of the chromatic program is that the homotopy groups $\pi_* Z_{K(n)}$ are essentially computable (though in practice these computations get quite involved, and little has been done for $n \ge 3$).
\end{rmk}

\subsection*{The stable chromatic tower}

$p$-local stable homotopy types are assembled from the stable $v_n$-periodic categories in the following manner.
Let $L_n^f\Sp$ denote the category of spectra which are $\bigoplus_{i = 0}^n v_i^{-1} \pi_*$-local, and let $L_n\Sp$ denote the category of spectra which are $\bigoplus_{i = 0}^n K(i)_*$-local, with associated (and potentially different) localization functors $L^f_n$, $L_n$.  A spectrum $Z$ has two potentially different \emph{chromatic towers}
\begin{gather*}
\cdots \rightarrow L^f_2Z \rightarrow L^f_1 Z \rightarrow L^f_0Z, \\
\cdots \rightarrow L_2Z \rightarrow L_1 Z \rightarrow L_0Z.
\end{gather*}
Under favorable circumstances (for example, when $Z$ is finite \cite{HopkinsRavenel}) we have chromatic convergence: the map
$$ Z_{(p)} \rightarrow \holim_n L_n Z $$
is an equivalence.  Presumably one can expect similar results for $L_n^f$, though the authors are not aware of any work on this.

The \emph{monochromatic layers} are the fibers
\begin{gather*}
M_n^fZ \rightarrow L^f_n Z \rightarrow L^f_{n-1} Z, \\
M_nZ \rightarrow L_n Z \rightarrow L_{n-1} Z. \\
\end{gather*}
Let $M_n^f\Sp$ (respectively $M_n\Sp$) denote the subcategory of $L^f_n \Sp$ (respectively $L_n\Sp$) consisting of the image of the functor $M_n^f$
(respectively $M_n$).
Then the pairs of functors
\begin{gather*}
(-)_{T(n)}: \Ho(M_n^f\Sp) \leftrightarrows \Ho(\Sp_{T(n)}) : M_n^f, \\
(-)_{K(n)}: \Ho(M_n\Sp) \leftrightarrows \Ho(\Sp_{K(n)}) : M_n
\end{gather*}
give equivalences between the respective homotopy categories (see, for example, \cite{Bousfield}).  We have
$$ v_n^{-1} V \simeq M^f_n V \simeq V_{T(n)} $$
and
$$
v_n^{-1} \pi_*(Z;V) \cong [\Sigma^* M^f_n V, M_n^f Z]_{\Sp} \cong [\Sigma^* V_{T(n)}, Z_{T(n)}]_{\Sp}. $$

\subsection*{$T(n)$-local Tate spectra}

For $G$ a finite group, and $Z$ a spectrum with a $G$-action, there is a natural transformation
$$ N: Z_{hG} \rightarrow Z^{hG} $$
called the \emph{norm map} \cite{GreenleesMay}.  The cofiber is called the \emph{Tate spectrum}:
$$ Z^{tG} := \mr{cof}(Z_{hG} \rightarrow Z^{hG}). $$
The following theorem is due to Hovey-Sadofsky \cite{HoveySadofsky} in the $K(n)$-local case, and was strengthened by Kuhn \cite{KuhnTate} to the $T(n)$-local case (see also \cite{MahowaldShick}, \cite{GreenleesSadofsky}, and \cite{ClausenMathew}).

\begin{thm}[Greenlees-Sadofsky, Kuhn]\label{thm:tate}
If $Z$ is $T(n)$-local, then the spectrum $Z^{tG}$ is $T(n)$-acyclic, and the norm map is a $T(n)$-equivalence.
\end{thm}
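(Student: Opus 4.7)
The two assertions are equivalent: by the defining cofiber sequence
\[ Z_{hG}\xrightarrow{N}Z^{hG}\to Z^{tG}, \]
the norm $N$ is a $T(n)$-equivalence if and only if its cofiber $Z^{tG}$ is $T(n)$-acyclic.  So it suffices to prove $Z^{tG}$ is $T(n)$-acyclic whenever $Z$ is $T(n)$-local.

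The plan is to reduce to the case $G=\ZZ/p$ in two steps.  Since $T(n)$-local spectra are $p$-local for $n\geq 1$ (as $T(n)=v_n^{-1}V$ for a finite type $n$ $p$-local spectrum $V$), the standard Sylow transfer argument applies: the composite $Z^{hG}\to Z^{hG_p}\to Z^{hG}$ of restriction and transfer to a Sylow $p$-subgroup $G_p\leq G$ is multiplication by $[G:G_p]$, a $p$-local unit, and the same holds for homotopy orbits and Tate.  This exhibits $Z^{tG}$ as a retract of $Z^{tG_p}$, reducing us to $G$ a $p$-group.  Then I would induct on $|G|$: every nontrivial $p$-group has a central $C\cong\ZZ/p$, and the Greenlees--May iterated Tate spectral sequence (which builds $Z^{tG}$ from $Z^{tC}$ equipped with its residual $G/C$-action) propagates $T(n)$-acyclicity from $C$ to $G$, reducing the whole theorem to the base case $G=\ZZ/p$.

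For $G=\ZZ/p$ I would run a thick subcategory argument.  The class of $T(n)$-local $\ZZ/p$-spectra $Z$ with $Z^{t\ZZ/p}$ $T(n)$-acyclic is closed under cofiber sequences and retracts, so it suffices to verify the statement on a generating family, e.g.\ $L_{T(n)}V$ for $V$ a finite type $n$ spectrum equipped with the trivial action, where $V^{t\ZZ/p}\simeq V\wedge S^{t\ZZ/p}$.  The $K(n)$-local analogue is the Hovey--Sadofsky theorem, proved by combining the Tate spectral sequence with Ravenel--Wilson's computation of $K(n)^{*}B\ZZ/p$ (showing the $E_\infty$-page vanishes).  Kuhn's strengthening to $T(n)$ exploits the compatibility of the Tate construction with the $v_n$-self map $v\colon\Sigma^{k}V\to V$ of $V$ to upgrade the $K(n)$-local vanishing to a $T(n)$-local one.

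The main obstacle is precisely this final step: the Tate construction does not commute with the filtered colimits defining $T(n)$-localization, so one cannot simply telescope the known $K(n)$-local vanishing to obtain the $T(n)$-local statement.  Kuhn's key idea is to arrange, using the multiplicative structure of the norm, that $v_n$ already acts nilpotently on the Tate construction of a suitably chosen finite type $n$ complex, so that inverting $v_n$ produces a contractible spectrum before any $K(n)$-localization is invoked.  Making this precise and reconciling it with the thick subcategory reduction is the technical heart of the argument.
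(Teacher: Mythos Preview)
The paper does not prove this theorem; it is stated with attribution to Hovey--Sadofsky (the $K(n)$-local case) and Kuhn (the $T(n)$-local strengthening), together with further citations, but no argument is given. There is thus nothing in the paper to compare against.

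Your reductions are sound: the two conclusions are equivalent by the defining cofiber sequence, the transfer argument correctly reduces to a Sylow $p$-subgroup, and the induction through a central $\ZZ/p\trianglelefteq G$ works (more cleanly via $Z_{hG}\simeq(Z_{hC})_{h(G/C)}$, $Z^{hG}\simeq(Z^{hC})^{h(G/C)}$, and the observation that $Z^{hC}$ remains $T(n)$-local). But the gap you flag is real and you do not close it. The thick subcategory reduction fails for exactly the reason you state: $L_{T(n)}V$ generates $\Sp_{T(n)}$ only as a \emph{localizing} subcategory, and $(-)^{tG}$ does not preserve filtered colimits. Your one-line gloss on Kuhn's fix is too vague to evaluate and, read literally, is not right: for a finite type $n$ complex $V$ with trivial $\ZZ/p$-action, exactness of $(-)^{t\ZZ/p}$ together with the Segal conjecture for $\ZZ/p$ gives $V^{t\ZZ/p}\simeq V\wedge S^{t\ZZ/p}\simeq V^\wedge_p$, on which the $v_n$-self map acts invertibly, not nilpotently. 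What one actually needs---after smashing with $V$ so that $v_n$ is invertible on $V\wedge Z$ and hence on $(V\wedge Z)^{t\ZZ/p}$---is that $(V\wedge Z)^{t\ZZ/p}$ is \emph{contractible}, and supplying this is the content of Kuhn's paper. Your proposal correctly locates the difficulty but does not resolve it.
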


In the case of $n = 0$, this reduces to the familiar statement that rationally, invariants and coinvariants with respect to a finite group are isomorphic via the norm.  In general, this theorem implies that  $T(n)$-local coalgebras, $T(n)$-locally, admit unique divided power structures. In some sense, Theorem~\ref{thm:tate} will be the primary mechanism which will allow unstable $v_n$-periodic homotopy types to admit Lie algebra models.

\subsection*{Unstable $v_n$-periodic homotopy theory}

Perhaps the most illuminating approach to \emph{unstable} $v_n$-periodic homotopy theory is that of \cite{Bousfield}, which we follow here.  This approach builds on previous work of Davis, Mahowald, Dror Farjoun, and many others.  Like the stable case, there will be two potentially different notions of unstable $v_n$-periodic equivalence: one based on unstable $v_n$-periodic homotopy groups, and one based on $K(n)$-homology.

The appropriate unstable analogs of $v_n$-periodic homotopy groups are defined as follows.  The periodicity theorem implies that unstably, a finite type $n$ complex admits a $v_n$-self map
$$ v: \Sigma^{k(N_0+1)} V \rightarrow \Sigma^{kN_0} V $$
for some $N_0 \gg 0$.
For any $X \in \Top_*$, its $v_n$-periodic homotopy groups (with coefficients in $V$) are defined by
$$ v_n^{-1}\pi_*(X;V) := v^{-1}[\Sigma^* V, X]_{\Top_*}. $$
for $n > 0$ ($v_0$-periodic homotopy is taken to be rational homotopy).  For $n > 0$ this definition only makes sense for $* \gg 0$, but because the result is $k$-periodic, one can define these groups for all $* \in \ZZ$.  These give the notion of a \emph{$v_n^{-1}\pi_*$-equivalence of spaces}.  Bousfield argues in \cite{Bousfield} that the appropriate notion of unstable $v_n$-periodic homology equivalence is that of a \emph{virtual $K(n)$-equivalence} --- a map of spaces $X \rightarrow Y$ for which the induced map
$$ (\Omega X)^{\ge n+3} \rightarrow (\Omega Y)^{\ge n+3} $$
is a $K(n)_*$-isomorphism.\footnote{A variant of this definition is explored by Kuhn in \cite{KuhnAdv}.}  Rather than try to explain why this is the appropriate notion we will simply point out that Bousfield proves that if the telescope conjecture is true, then virtual $K(n)$-equivalences are $v_n^{-1}\pi_*$-isomorphisms.

We will focus on the version of unstable $v_n$-periodic homotopy theory based on  $v_n^{-1}\pi_*$-equivalences.  The authors do not know if any attempt has been made to systematically study the unstable theory based on virtual $K(n)$-equivalences (in case the telescope conjecture is false).

Bousfield defines $L_n^f\Top_*$ to be the nullification of $\Top_*$ with respect to 
$$ \Sigma V_{n+1} \vee \bigvee_{\ell \ne p} M(\ZZ/\ell, 2), $$
where $V_{n+1}$ is a type $n+1$ complex of minimal connectivity (say it is $(d_n-3)$-connected).  Let $L_n^f$ denote the associated localization functor.
When restricted to $\Top_*^{\ge d_n}$,  
$L_n^f$ is localization with respect to $\bigoplus_{i = 0}^n v_i^{-1}\pi_*$-equivalences.  For a space $X$ there is an unstable chromatic tower
$$
\cdots \rightarrow L^f_2X \rightarrow L^f_1 X \rightarrow L^f_0X.
$$
The unstable chromatic tower actually always converges to $X_{(p)}$ for a trivial reason: the sequence $d_n$ is non-decreasing and unbounded \cite{Bousfield94}.

The $n$th monochromatic layer is defined to be the homotopy fiber
$$ M_n^fX \rightarrow L^f_n X \rightarrow L^f_{n-1} X. $$
Bousfield defines the $n$th unstable monochromatic category $M_n^f \Top_*$ to be the full subcategory of $\Top_*$ consisting of the spaces of the form $(M_n^fX)^{\ge d_n}$.  Bousfield's work in \cite{Bousfield} implies the equivalences in $M_n^f\Top_*$ are precisely the $v_n^{-1}\pi_*$-equivalences.  Furthermore, for any type $n$ complex $V$ with an unstable $v_n$-self map
$$ v: \Sigma^k V \rightarrow V $$
the $v_n$-periodic homotopy groups are in fact the $V$-based homotopy groups as computed in $\Ho(M_n^f\Top_*)$:
$$ v_n^{-1}\pi_*(X;V) \cong [\Sigma^* V, M_n^f X]_{\Top_*}. $$

\subsection*{The Bousfield-Kuhn functor}

Bousfield and Kuhn \cite{Kuhn}, \cite{Bousfield} observe $v_n$-periodic homotopy groups are the homotopy groups of a spectrum $\Phi_V(X)$. The $kN$th space of this spectrum is given by
$$ \Phi_V(X)_{kN} = \ul{\Top}_*(V,X) $$ 
with spectrum structure maps generated by the maps
$$ \Phi_V(X)_{kN} = \ul{\Top}_*(V,X) \xrightarrow{v^*} \ul{\Top}_*(\Sigma^{k}V,X) \simeq \Omega^k \Phi_V(X)_{k(N+1)}. $$ 
It follows that 
$$ \pi_*\Phi_V(X) \cong v_n^{-1}\pi_*(X;V). $$
The above definition only depended on $\Sigma^{kN}V$ for $N$ large.  As a result, it only depends on the stable homotopy type $\Sigma^\infty V$.  One can therefore take a suitable inverse system $V_i$ of finite type $n$ spectra so that
$$ \holim_i v_n^{-1} V_i \simeq S_{T(n)}. $$
The \emph{Bousfield-Kuhn} functor 
$$ \Phi_n: \Ho(\Top_*) \rightarrow \Ho(\Sp_{T(n)}) $$ 
is given by
$$ \Phi_n (X) = \holim_i \Phi_{V_i^\vee}(X). $$  
We \emph{define} the completed unstable $v_n$-periodic homotopy groups (without coefficients in a type $n$ complex) by\footnote{These should not be confused with the ``uncompleted'' unstable $v_n$-periodic homotopy groups studied by Bousfield, Davis, Mahowald, and others.  These are given as the homotopy groups of $M_n^f\Phi_n(X)$ (see \cite{Kuhncalc}).}
$$ v_n^{-1} \pi_*(X)^{\wedge} := \pi_* \Phi_n(X). $$
The Bousfield-Kuhn functor enjoys many remarkable properties:
\begin{enumerate}
\item For $X \in \Top_*$ and a type $n$ spectrum $V$ we have
$$ [\Sigma^* V, \Phi_n(X)]_{\Sp} \simeq v_n^{-1}\pi_*(X;V). $$ 

\item $\Phi_n$ preserves fiber sequences. 

\item For $Z \in \Sp$ there is a natural equivalence 
$$ \Phi_n \Omega^\infty Z \simeq Z_{T(n)}. $$
\end{enumerate}
Property~(3) above is the strangest property of all: it implies (since by (2) $\Phi_n$ commutes with $\Omega$) that a $T(n)$-local spectrum is \emph{determined} by any one of the spaces in its $\Omega$-spectrum, \emph{independent} of the infinite loop space structure.

\subsection*{Relation between stable and unstable $v_n$-periodic homotopy}

The category $\Ho(\Sp_{T(n)})$ serves as the ``stable homotopy category'' of the unstable $v_n$-periodic homotopy category $\Ho(M_n^f\Top_*)$, with adjoint functors \cite{Bousfield}
$$ (\Sigma^\infty-)_{T(n)}: \Ho(M^f_n\Top_*) \leftrightarrows \Ho(\Sp_{T(n)}): (\Omega^\infty M_n^f-)^{\ge d_n}. $$
Analogously to the rational situation, it is shown in \cite{Bousfield} that the composite
$$ \Ho(\Sp_{T(n)}) \xrightarrow{(\Omega^\infty M_n^f-)^{\ge d_n}} \Ho(M_n^f\Top_*) \xrightarrow{\Phi_n} \Ho(\Sp_{T(n)}) $$
is naturally isomorphic to the identity functor.  Thus the stable $v_n$-periodic homotopy category admits a fully faithful embedding into the unstable $v_n$-periodic homotopy category.  This leads one to expect that there is a ``Lie algebra'' model of unstable $v_n$-periodic homotopy, where the infinite loop spaces correspond to the Lie algebras with trivial Lie structure. 

\subsection*{The $K(n)$-local variant}

There is a variant of the Bousfield-Kuhn functor 
$$ \Phi_{K(n)}: \Ho(\Top_*) \rightarrow \Ho(\Sp_{K(n)}) $$
defined by
$$ \Phi_{K(n)}(X) \simeq \Phi_n(X)_{K(n)}. $$
We then have
$$ \Phi_{K(n)} \Omega^\infty Z \simeq Z_{K(n)}. $$
There is a corresponding variant of completed unstable $v_n$-periodic homotopy groups which (probably to the chagrin of many) we will denote:
$$ v_{K(n)}^{-1}\pi_*(X)^{\wedge} := \pi_*\Phi_{K(n)}(X). $$
Of course if the telescope conjecture is true, $\Phi_{n}(X) \simeq \Phi_{K(n)}(X)$, and the two versions of unstable $v_n$-periodic homotopy agree.  If the telescope conjecture is not true, the groups $v_{K(n)}^{-1}\pi_*$ will likely be far more computable than $v^{-1}_n \pi_*$. 

\section{The comparison map}\label{sec:comparison}

Motivated by rational and $p$-adic homotopy theory, one could ask: to what degree is an unstable homotopy type $X \in M_n^f\Top_*$ modeled by the $T(n)$-local $\mr{Comm}$-algebra $S_{T(n)}^{X}$ (the ``$S_{T(n)}$-valued cochains'')?  I.e., what can be said of the functor:
$$ S_{T(n)}^{(-)}: \Ho(M_n^f\Top_*)^{op} \rightarrow \Ho(\Alg_{\mr{Comm}}({\Sp_{T(n)}}))? $$

% \subsection*{$T(n)$-local $\mr{Comm}$-algebras}

% Since $T(n)$-localization is not smashing, when considering the  $\mr{Comm}$-algebra $S_{T(n)}^X$, it is important to be precise in terms of which category we are working in.  There are two natural candidates: $\mr{Comm}$-algebras in $\Mod_{S_{T(n)}}$, and $\mr{Comm}$-algebras in $\Sp_{T(n)}$.  The category $\Alg_{\mc{Comm}}(\Sp_{T(n)})$ can be regarded as the full subcategory of $\Alg_{\mc{Comm}}(\Mod_{S_{T(n)}})$ consisting of those $\mr{Comm}$-algebras whose underlying $S_{T(n)}$-module is $T(n)$-local.  
%The appropriate notion of $\TAQ$-homology in the $T(n)$-local context is the $T(n)$-localization of $\TAQ$-homology of $\mr{Comm}_{S_{T(n)}}$-algebras:
%$$ \Sigma^\infty_{\Alg_\mr{Comm}(S_{T(n)})} A \simeq \TAQ^{\mr{Comm}_{S_{T(n)}}}(A)_{T(n)}. $$
%However, $\TAQ$-cohomology is already $T(n)$-local, so no modification is necessary.

The first thing to check is to what degree the unstable $v_n$-periodic homotopy groups of $X$ can be recovered from the algebra $S^X_{T(n)}$:
i.e. for an unstable type $n$ complex $V$ with $v_n$-self map 
$$ v: \Sigma^k V \rightarrow V $$
what can be said of the following composite?
\begin{equation}\label{eq:precomparison}
v_n^{-1}\pi_*(X;V)  \cong [\Sigma^* V, M^f_n(X)]_{\Top_*} \\ 
 \rightarrow \left[S_{T(n)}^X,S^{\Sigma^* V}_{T(n)}\right]_{\Alg_{\mr{Comm}}}
\end{equation}

We begin with the observation, which we learned from Mike Hopkins, that the $\mr{Comm}$-algebra $S^V_{T(n)}$ is actually an ``infinite loop object'' in the category $\Alg_{\mr{Comm}}$:

\begin{prop}
There is an equivalence of $\mr{Comm}$-algebras
$$ S^V_{T(n)} \simeq \mr{triv} (V^\vee). $$
\end{prop}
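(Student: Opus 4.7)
The $\mr{Comm}$-algebra structure on $S^V = V^\vee$ is the Spanier--Whitehead dual of the cocommutative coalgebra structure on $\Sigma^\infty V$ induced by the space-level diagonal $V\to V\wedge V$. My plan is to construct the claimed equivalence $S^V_{T(n)}\simeq \mr{triv}(V^\vee)$ in $\Alg_{\mr{Comm}}(\Sp_{T(n)})$ by showing that, $T(n)$-locally, every higher structure map of $S^V$ becomes null; the identity on underlying spectra will then lift to a $\mr{Comm}$-algebra map which is the desired equivalence.

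For each $k\geq 2$, the $k$-ary structure map has the shape
\[
\mu_k\colon (S^V)^{\wedge k}_{h\Sigma_k} \simeq F(V^{\wedge k},S)_{h\Sigma_k} \longrightarrow F(V,S) = S^V,
\]
induced by the $\Sigma_k$-equivariant dualization of the iterated diagonal $V\to V^{\wedge k}$. Applying the Greenlees--Sadofsky--Kuhn vanishing theorem (Theorem~\ref{thm:tate}), the norm map supplies a $T(n)$-local equivalence
\[
F(V^{\wedge k},S)_{h\Sigma_k} \xrightarrow{\ \simeq\ } F(V^{\wedge k},S)^{h\Sigma_k} \simeq F\bigl((V^{\wedge k})_{h\Sigma_k},S\bigr).
\]
Combined with Spanier--Whitehead duality for the finite spectrum $V$, the structure map $\mu_k$ is identified $T(n)$-locally with $F(-,S)$ applied to a ``normed diagonal'' $V \to (V^{\wedge k})_{h\Sigma_k}$, namely the composite of the canonical lift $V\to (V^{\wedge k})^{h\Sigma_k}$ of the diagonal with the inverse of the norm equivalence available in $\Sp_{T(n)}$.

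The main obstacle is to show this normed diagonal is null in $\Sp_{T(n)}$ for $k\geq 2$, and this is where the unstable nature of $V$ is essential. The $k$-fold diagonal $V\to V^{\wedge k}$ arises from the unit $V\to \Omega^\infty\Sigma^\infty V$ and therefore factors through the Snaith-type decomposition of $\Sigma^\infty\Omega^\infty$; by Property~(3) of the Bousfield--Kuhn functor, only the arity-$1$ summand of this decomposition survives $T(n)$-localization, so the components landing in the higher extended powers $(V^{\wedge k})_{h\Sigma_k}$ for $k\geq 2$ are forced to vanish. Granting this nulling, the identity on underlying spectra extends canonically to a $\mr{Comm}$-algebra map $S^V_{T(n)} \to \mr{triv}(V^\vee)$, and the underlying map of spectra is by construction the identity, hence an equivalence in $\Sp_{T(n)}$.
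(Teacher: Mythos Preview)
Your argument has a genuine gap at the crucial step. You claim that ``by Property~(3) of the Bousfield--Kuhn functor, only the arity-$1$ summand of [the Snaith] decomposition survives $T(n)$-localization.'' This is false: the Snaith summands $(\Sigma^\infty V)^{\wedge k}_{h\Sigma_k}$ are not $T(n)$-acyclic for $k\ge 2$ (already for $V$ a sphere they have rich Morava $K$-theory). Property~(3) says $\Phi_n(\Omega^\infty Z)\simeq Z_{T(n)}$; it concerns the Bousfield--Kuhn functor applied to an infinite loop \emph{space}, not the $T(n)$-localization of the stable splitting of $\Sigma^\infty QV$. So nothing you have written forces the normed diagonal $V\to (V^{\wedge k})_{h\Sigma_k}$ to be null $T(n)$-locally. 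There is also a secondary issue you gloss over: even if each $\mu_k$ were individually null, promoting the identity on underlying spectra to a $\mr{Comm}$-algebra map to $\mr{triv}(V^\vee)$ requires a coherent system of nullhomotopies, not just levelwise ones.

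The paper's proof avoids both problems with a single stroke, and it is worth internalizing the idea because it is the only place where the hypothesis that $V$ is type~$n$ actually enters. The $v_n$-self-map $v\colon \Sigma^k V\to V$ induces, for every $N$, an equivalence of $\mr{Comm}$-algebras
\[
S^V_{T(n)} \xrightarrow{\ (v^N)^*\ }\ S^{\Sigma^{Nk}V}_{T(n)} \ \simeq\ \Omega^{Nk}_{\Alg_{\mr{Comm}}}\bigl(S^V_{T(n)}\bigr),
\]
so $S^V_{T(n)}$ is an infinite loop object in $\Alg_{\mr{Comm}}$. Since $\Omega^\infty_{\Alg_{\mr{Comm}}}\simeq \mr{triv}$ on the underlying stable category, infinite loop objects in $\Alg_{\mr{Comm}}$ are exactly the trivial algebras, and the result follows. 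If you want to salvage your ``kill the diagonal'' strategy, note that this same self-map lets you replace $V$ by $\Sigma^{Nk}V$ $T(n)$-locally, and for $N$ large the reduced diagonal $\Sigma^{Nk}V\to (\Sigma^{Nk}V)^{\wedge k}$ is null by elementary connectivity; but making this coherent across all $k$ is exactly what the infinite-loop-object argument packages for free.
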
 

\begin{proof}
The existence of the $v_n$-self map $v$ shows that $S_{T(n)}^{V}$ is an infinite loop object of $\Alg_\mr{Comm}$:
$$ S_{T(n)}^{V} \xrightarrow[\simeq]{(v^N)^*} S_{T(n)}^{\Sigma^{Nk} V} \simeq \Omega^{Nk} S_{T(n)}^V. $$
The result follows from the fact that the infinite loop objects in $\Alg_\mr{Comm}$ are the trivial algebras on the underlying spectra.
\end{proof}

Using Corollary~\ref{cor:TAQ}, we now deduce:

\begin{cor}
We have
$$ \ul{\Alg}_\mr{Comm}(S_{T(n)}^X, S_{T(n)}^{\Sigma^* V}) \simeq \Omega^\infty \Sigma^* \TAQ_{S_{T(n)}}(S^X_{T(n)}) \wedge V^\vee. $$
\end{cor}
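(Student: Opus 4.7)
The plan is to chain the preceding Proposition with Corollary~\ref{cor:TAQ} and the dualizability of $V$. The starting observation is that since $\mr{triv}$, as a right adjoint, commutes with loops, the Proposition yields
\[
S^{\Sigma^* V}_{T(n)} \simeq F(\Sigma^* V, S_{T(n)}) \simeq \mr{triv}(\Sigma^{-*} V^\vee)
\]
(with $*$ ranging over integer suspensions). This rewrites the left-hand side as a mapping space into a trivial $\mr{Comm}$-algebra.

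Next I would invoke the adjunction $\TAQ^{\mr{Comm}} \dashv \mr{triv}$, the derived-mapping-space version of which is precisely the content powering Corollary~\ref{cor:TAQ}. This converts
\[
\ul{\Alg}_{\mr{Comm}}(S^X_{T(n)}, \mr{triv}(\Sigma^{*} V^\vee)) \simeq \ul{\Mod}_{S_{T(n)}}(\TAQ^{\mr{Comm}}(S^X_{T(n)}), \Sigma^{*} V^\vee).
\]
Writing $\ul{\Mod}_R(-,-) = \Omega^\infty F_R(-,-)$ and using that $V$ is a finite spectrum (so $V^\vee$ is dualizable over $S_{T(n)}$), the internal Hom factors as
\[
F_{S_{T(n)}}(M, \Sigma^{*} V^\vee) \simeq \Sigma^{*} F_{S_{T(n)}}(M, S_{T(n)}) \wedge V^\vee \simeq \Sigma^{*} M^\vee \wedge V^\vee.
\]

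Setting $M = \TAQ^{\mr{Comm}}(S^X_{T(n)})$ and using the definition $\TAQ_{S_{T(n)}} = (\TAQ^{\mr{Comm}})^\vee$ identifies the result with $\Omega^\infty \Sigma^{*} \TAQ_{S_{T(n)}}(S^X_{T(n)}) \wedge V^\vee$, as claimed. The only substantive input is the reduction of $S_{T(n)}^V$ to a trivial algebra via the Proposition; the remainder is a formal consequence of the $\TAQ^{\mr{Comm}} \dashv \mr{triv}$ adjunction, the duality formula $F_R(M, V^\vee) \simeq M^\vee \wedge V^\vee$ for the dualizable spectrum $V$, and matching of the $\Sigma^{*}$-grading conventions on the two sides. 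I do not anticipate a serious obstacle beyond careful bookkeeping of suspensions.
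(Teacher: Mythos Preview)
Your approach is correct and matches the paper's: the corollary is stated as an immediate consequence of the preceding Proposition (which identifies $S_{T(n)}^{\Sigma^* V}$ with a trivial algebra) together with Corollary~\ref{cor:TAQ} (the $\TAQ^{\mc{O}}\dashv\mr{triv}$ adjunction), and you have supplied exactly these steps plus the dualizability of $V$ needed to extract the $V^\vee$ factor. One minor bookkeeping slip: you write $\mr{triv}(\Sigma^{-*}V^\vee)$ in the first display and then $\mr{triv}(\Sigma^{*}V^\vee)$ in the second---the sign of the suspension should be tracked consistently, though as you note this is purely a grading convention issue and does not affect the argument.
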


We deduce that (\ref{eq:precomparison}) refines to a natural transformation
$$ c^V_X: \Phi_{V}(X) \rightarrow \TAQ_{S_{T(n)}}(S_{T(n)}^X) \wedge V^\vee. $$
Taking a suitable homotopy inverse limit of these natural transformations gives a natural transformation
$$ c_X: \Phi_n(X) \rightarrow \TAQ_{S_{T(n)}}(S_{T(n)}^{X}) $$
which we will call \emph{the comparison map}.  A variant, which involves replacing $S_{T(n)}$ with $S_{K(n)}$, everywhere, is defined in \cite{BKTAQ}:
$$ c_X^{K(n)}: \Phi_{K(n)}(X) \rightarrow \TAQ_{{S_{K(n)}}}(S_{K(n)}^X). $$

The main theorem of \cite{BKTAQ} is

\begin{thm}\label{thm:main}
The comparison map $c^{K(n)}_X$ is an equivalence for $X$ a sphere.
\end{thm}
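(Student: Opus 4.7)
The plan is to compare the two functors $\Phi_{K(n)}(-)$ and $\TAQ_{S_{K(n)}}(S^{(-)}_{K(n)})$ from $\Top_*$ to $\Sp_{K(n)}$ via Goodwillie calculus in the space variable, showing that the comparison map $c_X^{K(n)}$ induces an equivalence on every Taylor layer, and that both Taylor towers converge at $X = S^m$.

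First I would identify the Goodwillie derivatives on the two sides. On the left, the equivalence $\Phi_{K(n)}\Omega^\infty Z \simeq Z_{K(n)}$ together with a chain-rule argument for the Bousfield-Kuhn functor gives $\partial_k \Phi_{K(n)} \simeq (\partial_k \Id_{\Top_*})_{K(n)}$, which by Example~\ref{ex:spectrallie} is $(s^{-1}\Lie_S)_k$ localized at $K(n)$; for $X = S^m$ these layers are the ones analyzed explicitly by Arone-Mahowald. On the right, the layers of the Taylor tower of $X \mapsto \TAQ^{S_{K(n)}}(S^X_{K(n)})$ are accessible via the monadic bar formula $\TAQ^{\Comm_{S_{K(n)}}}(-) \simeq B(\Id, \mc{F}_{\Comm}, -)$ applied to the augmentation ideal of $S^X_{K(n)}$, and Ching's identification $B\Comm_S \simeq (\partial_* \Id_{\Top_*})^\vee$ shows that after $K(n)$-localization and $S_{K(n)}$-linear duality they are again built from $\partial_* \Id_{\Top_*}$. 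Naturality of $c_X^{K(n)}$ ensures it respects the Goodwillie filtration, and one verifies, by tracing its construction back to the stabilization equivalence $\Sigma^\infty_{\Alg_\Comm} A \simeq \TAQ^\Comm(A)$ together with the identification $S^V_{T(n)} \simeq \mr{triv}(V^\vee)$, that the induced map on $k$-th layers is the canonical equivalence between these two copies of the same operad.

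Next, convergence is handled separately on each side. On the left, the Arone-Mahowald theorem is exactly the statement that spheres are $\Phi_{K(n)}$-good, so the Goodwillie tower of $\Phi_{K(n)}$ converges at $X = S^m$. For the right-hand side I would invoke Theorem~\ref{thm:tate}: the $K(n)$-local vanishing of the Tate spectra $Z^{t\Sigma_k}$ forces the norm maps $Z_{h\Sigma_k} \to Z^{h\Sigma_k}$ to be equivalences on the relevant smash powers, which both preserves the shape of the bar-construction formulas $K(n)$-locally and collapses any discrepancy between naive and divided-power coalgebra structures in the cobar computation of $\TAQ_{S_{K(n)}}$. Combined with connectivity estimates on the layers evaluated at the finite-dimensional input $S^m$, this should force the right-hand tower to converge as well.

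The main obstacle is the identification of $c_X^{K(n)}$ with the Koszul-duality pairing on layers --- that is, matching the map extracted from the cochain-level adjunction $\ul{\Alg}_\Comm(S_{K(n)}^X, S_{K(n)}^{\Sigma^* V}) \simeq \Omega^\infty \Sigma^* \TAQ_{S_{K(n)}}(S_{K(n)}^X) \wedge V^\vee$ with the one produced by the general stabilization theory of $\Alg_\Comm$. This is the step that forces serious engagement with the Morava $E$-theory Dyer-Lashof algebra: detecting $K(n)$-local equivalences via Morava $E$-theory, one reduces the layer-wise claim to an algebraic identity between the $E$-theory Dyer-Lashof operations acting on $E^* S^X_{K(n)}$ and the spectral-Lie-type operations appearing in Arone-Mahowald's computation of the $E$-cohomology of the Goodwillie layers of $\Phi_{K(n)}(S^m)$. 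This algebraic verification is the genuine computational heart of \cite{BKTAQ}.
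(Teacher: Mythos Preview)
Your broad outline --- compare the Goodwillie towers of the two functors, identify the layers abstractly, and verify convergence on both sides --- matches the paper's. The paper identifies both sets of layers with $(s^{-1}\Lie_k \wedge_{h\Sigma_k} X^{\wedge k})_{K(n)}$ (Lemma~\ref{lem:Phitower}, Corollary~\ref{cor:TAQtower}) and invokes Arone--Mahowald (Theorem~\ref{thm:AroneMahowald}) for convergence on the $\Phi_{K(n)}$ side. One minor correction: convergence on the $\TAQ$ side is easier than you indicate --- Kuhn's filtration of $\TAQ^R$ is exhaustive, so the dual tower on $\TAQ_R$ converges automatically (see~\eqref{eq:kuhnconv}); no Tate vanishing or connectivity argument is needed there.

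The real gap is your middle step. You claim that, by tracing definitions, $c^{K(n)}_X$ induces the canonical equivalence on each layer, and then reframe this as a layer-wise algebraic identity in the $E$-theory Dyer-Lashof algebra. The paper does \emph{not} verify the map on layers directly, and explicitly flags this as the crux: knowing the layers are abstractly equivalent does not tell you that $c^{K(n)}$ realizes that equivalence. The paper's route is indirect. First (Proposition~\ref{prop:key}) it shows $c^{K(n)}_{QX}$ is an equivalence for suitable suspension spaces $X$ --- this is where the Dyer-Lashof monad $\mb{T}$ enters, via Lemma~\ref{lem:technical} and the Basterra spectral sequence~\eqref{eq:BSS}, computing $K(n)_*\TAQ_{S_{K(n)}}(S^{QX}_{K(n)})$ in one shot rather than layer-by-layer. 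Then this is fed into the Bousfield--Kan resolution $X \to Q^{\bullet+1}X$: the iterated Snaith splitting rewrites $P_{p^n}(Q^{s+1})(X)$ as $QY^s$ with $Y^s$ in range, and diagram~\eqref{diag:main} produces only a weak \emph{retraction} of $c^{K(n)}_X$ for large odd spheres. Finally, Weiss calculus together with the abstract equivalence and finite $K(n)$-homology of the layers upgrades the retraction to an equivalence of towers; since derivatives are determined by values on large spheres, the conclusion then propagates to all spheres. This $QX$-then-resolve-then-retract maneuver is the idea missing from your proposal.
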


It follows formally from this theorem that the comparison map is an equivalence for a larger class of spaces: the class of finite $\Phi_{K(n)}$-\emph{good} spaces.  This will be discussed in Section~\ref{sec:consequences}.  In the case of $n = 1$, Theorem~\ref{thm:main} was originally proven by French \cite{French}.

It is shown in \cite{Ching} that cobar constructions for $\mc{O}$-coalgebras get a $C\mc{O}$-algebra structure (where $C$ denotes the cooperadic cobar construction). 
The spectrum 
$$ \TAQ_{S_{T(n)}}(S^X_{T(n)}) $$ 
is therefore an algebra over $s^{-1} \Lie_S$ (see Example~\ref{ex:spectrallie}).  We might regard this as a candidate for a ``Lie algebra model'' for the unstable $v_n$-periodic homotopy type of $X$, though this is probably only reasonable for $X$ finite, as will be explained in Section~\ref{sec:Heuts}.

\section{Outline of the proof of the main theorem}\label{sec:proof}

Our approach to Theorem~\ref{thm:main} is essentially computational in nature, and uses the Morava $E$-theory Dyer-Lashof algebra in an essential way.  Unfortunately, the proof given in \cite{BKTAQ} is necessarily technical, and consequently is not optimized for leisurely reading.  In this section we give an overview of the main ideas of our proof.  As we will explain in Sections~\ref{sec:AroneChing} and \ref{sec:Heuts}, Arone-Ching \cite{AroneChingvn} and Heuts \cite{Heuts2} have announced more abstract approachs to prove Theorem~\ref{thm:main}, with stronger consequences.  Perhaps the situation is comparable to the early work on $p$-adic homotopy theory of Kriz and Goerss \cite{Kriz}, \cite{Goerss}: Kriz's approach (like that of \cite{Mandell}) is computational, based on the Steenrod algebra, whereas Goerss' is abstract, based on Galois descent and model category theory.  Both approaches offer insight into the theory of using commutative algebras/coalgebras to model $p$-adic homotopy types.  We hope the same is true of the two approaches to model unstable $v_n$-periodic homotopy.

\subsection*{Goodwillie towers}

The proof of \ref{thm:main} involves induction up the Goodwillie towers of both the source and target of the comparison map.  The key fact that the argument hinges on is an observation of Kuhn \cite{KuhnMcCord}: the layers of both of these towers are abstractly equivalent.

For our application of Goodwillie calculus to the situation, we point out that, in the context of model categories, Pereira \cite{Pereira1} has shown that Goodwillie's calculus of functors (as developed in \cite{Goodwillie}) applies to homotopy functors
$$ F: \mc{C} \rightarrow \mc{D} $$
between arbitrary model categories with fairly minimal hypotheses (see also \cite{BiedermannRondigs} and \cite{Lurie})\footnote{Yet another general treatment of homotopy calculus can be found in \cite{BauerJohnsonMcCarthy}, but at present this approach only applies to functors which take values in spectra.}.  For simplicity we shall assume that $\mc{C}$ and $\mc{D}$ are pointed, and restrict attention to reduced $F$ (i.e. $F(*) \simeq *$).

Associated to $F$ is its \emph{Goodwillie tower}, a series of \emph{$k$-excisive approximations}
$$ P_kF : \mc{C} \rightarrow \mc{D} $$
which form a tower under $F$:
$$ F \rightarrow \cdots \rightarrow P_kF \rightarrow P_{k-1}F \rightarrow \cdots \rightarrow P_1F. $$
We say the Goodwillie tower \emph{converges} at $X$ if the map
$$ F(X) \rightarrow \holim_k P_kF(X) $$
is an equivalence.
The \emph{layers} of the Goodwillie tower are the fibers
$$ D_kF \rightarrow P_kF \rightarrow P_{k-1}F. $$
If $F$ is finitary (i.e. preserves filtered homotopy colimits), the layers take the form
$$ D_kF(X) \simeq \Omega^\infty_\mc{D} \mr{cr}^{lin}_k(F)(\Sigma^\infty_{\mc{C}}X, \cdots, \Sigma^\infty_\mc{C} X)_{h\Sigma_k} $$
where 
$$ \mr{cr}^{lin}_k(F): \Sp(\mc{C})^{\times k} \rightarrow \Sp(\mc{D}) $$ 
is a certain symmetric multilinear functor called the \emph{multilinearized cross-effect}.
In the case where $\Sp(\mc{C}), \Sp(\mc{D})$ are Quillen equivalent to $\Sp = \Sp(\Top_*)$, the multilinearized cross effect is given by 
$$ \mr{cr}^{lin}_k(F)(Z_1, \cdots, Z_k) \simeq  \partial_kF \wedge Z_1 \wedge \cdots \wedge Z_k $$  
where $\partial_kF$ is a spectrum with $\Sigma_k$-action (the \emph{$k$th derivative of $F$}), and we have
$$ D_kF(X) \simeq \Omega^\infty_{\mc{D}} \left(\partial_kF \wedge_{h\Sigma_k} (\Sigma^\infty_{\mc{C}}X)^{\wedge k}\right). $$
The Goodwillie tower is an analog for functors of the Taylor series of a function, with $D_k(F)$ playing the role of the $k$th term of the Taylor series.

We consider the Goodwillie towers of the functors
\begin{gather*} 
\Phi_{K(n)}: \Top_* \rightarrow \Sp_{K(n)} \\
\TAQ_{{S_{K(n)}}}(S^{(-)}_{K(n)}) 
: \Top_* \rightarrow \Sp_{K(n)}.
\end{gather*}
Note that the second of these functors is not finitary ($\Phi_{K(n)}$ is actually finitary, as long as the corresponding homotopy colimit is taken in the category $\Sp_{K(n)}$).
In the case of $\Phi_{K(n)}$, it is fairly easy to see that its Goodwillie tower is closely related to the Goodwillie tower of the identity functor
$$ \Id:\Top_* \rightarrow \Top_*. $$

\begin{lem}\label{lem:Phitower}
There are equivalences
$$ P_k\Phi_{K(n)} \simeq \Phi_{K(n)}P_k\Id. $$
\end{lem}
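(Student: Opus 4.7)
The plan is to reduce the lemma to two abstract properties of $\Phi_{K(n)}$ via Goodwillie's explicit construction of the $k$-excisive approximation as a sequential homotopy colimit. Recall that $P_k F \simeq \hocolim_n T_k^n F$, where for a reduced functor $F\colon \Top_* \to \mc{D}$ one sets $T_k F(X) = \holim_{\emptyset \ne U \subseteq \ul{k+1}} F(X \star U)$, the homotopy limit over the punctured $(k+1)$-cube of $F$ evaluated on the strongly cocartesian cube $U \mapsto X \star U$ of iterated joins in $\Top_*$. The idea is simply to commute $\Phi_{K(n)}$ past the finite homotopy limits defining $T_k$ and then past the sequential colimit defining $P_k$.

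First I would verify that $\Phi_{K(n)}\colon \Top_* \to \Sp_{K(n)}$ preserves finite homotopy limits. Writing $\Phi_{K(n)} \simeq L_{K(n)} \circ \Phi_n$, the functor $\Phi_n$ is constructed as a homotopy inverse limit of the functors $\Phi_V(-)$, whose spaces are mapping spaces $\ul{\Top}_*(V,-)$ that preserve limits in their variable; the homotopy limit over $i$ then preserves limits as well. The localization $L_{K(n)}$, viewed between stable $\infty$-categories, is exact: it is a left adjoint, and finite limits in a stable category coincide with finite colimits, so it preserves finite limits. Hence $\Phi_{K(n)}$ preserves finite homotopy limits. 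The second property I will use is already recorded in Section~\ref{sec:vn}: $\Phi_{K(n)}$ is finitary when its target is $\Sp_{K(n)}$, i.e.\ it carries filtered homotopy colimits of spaces to filtered homotopy colimits computed $K(n)$-locally.

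Granted these two properties, the argument is immediate. Since the cube $U \mapsto X \star U$ lives entirely in the source $\Top_*$, for any reduced functor $G\colon \Top_* \to \Top_*$ we have
\begin{align*}
T_k(\Phi_{K(n)} \circ G)(X)
&= \holim_{\emptyset \ne U \subseteq \ul{k+1}} \Phi_{K(n)}\bigl(G(X \star U)\bigr) \\
&\simeq \Phi_{K(n)}\bigl(\holim_{U} G(X \star U)\bigr)
= \Phi_{K(n)}\bigl(T_k G(X)\bigr),
\end{align*}
where the middle equivalence uses preservation of finite homotopy limits. Taking $G = T_k^{n-1}\Id$ and iterating yields $T_k^n \Phi_{K(n)} \simeq \Phi_{K(n)} \circ T_k^n \Id$ for all $n \ge 0$. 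Passing to $\hocolim_n$ and using the finitary property to slide $\Phi_{K(n)}$ past the sequential colimit gives
$$ P_k \Phi_{K(n)} \simeq \hocolim_n \Phi_{K(n)} \circ T_k^n \Id \simeq \Phi_{K(n)} \circ \hocolim_n T_k^n \Id \simeq \Phi_{K(n)} \circ P_k \Id, $$
as desired.

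The hard part will be the bookkeeping around the last step: one needs the sequential colimit $\hocolim_n T_k^n \Id(X)$, which is a space, to be transported to the colimit of its $\Phi_{K(n)}$-images as computed in $\Sp_{K(n)}$, not in $\Sp_{T(n)}$ or in $\Sp$. This is exactly what the $K(n)$-local finitariness of $\Phi_{K(n)}$ asserts, and verifying it requires unwinding Kuhn's formula $\Phi_{K(n)}(X) = L_{K(n)} \holim_i \Phi_{V_i^\vee}(X)$: filtered colimits commute with each $\Phi_{V_i^\vee}$ since this is mapping out of a finite object, but interchanging the sequential colimit with the homotopy inverse limit over $i$ is where $K(n)$-localization does real work.
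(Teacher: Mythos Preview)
Your argument is correct, and it takes a more constructive route than the paper's. You directly verify that $\Phi_{K(n)}$ commutes with Goodwillie's explicit construction $P_k = \hocolim_n T_k^n$ by checking it preserves the finite cubical limits defining $T_k$ and the sequential colimit defining $P_k$. The paper instead argues at the level of the tower: it observes (using that $\Phi_{K(n)}$ preserves fiber sequences and that $\Phi_{K(n)}\Omega^\infty \simeq L_{K(n)}$) that the fibers of the tower $\{\Phi_{K(n)} P_k\Id\}$ are $\Phi_{K(n)} D_k\Id(X) \simeq (s^{-1}\Lie_k \wedge_{h\Sigma_k} X^{\wedge k})_{K(n)}$, which are $k$-homogeneous; a tower under $\Phi_{K(n)}$ with $j$-homogeneous layers for $j\le k$ is then necessarily the Goodwillie tower. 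Your approach has the virtue of being self-contained and making explicit exactly which (co)limit-preservation properties of $\Phi_{K(n)}$ are being used; the paper's approach is terser and foregrounds the identification of the layers, which is what is actually needed downstream when comparing with the Kuhn filtration on $\TAQ$. Note also that your finite-limit step already subsumes the paper's observation: once $\Phi_{K(n)}$ preserves finite limits, it carries $k$-excisive functors to $k$-excisive functors and preserves cartesian cubes, so the homogeneity of the layers follows immediately.
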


\begin{proof}
This follows easily from observing that the fibers of the RHS are given by
$$ \Phi_{K(n)} D_k\Id(X) \simeq (s^{-1}\Lie_k \wedge_{h\Sigma_k} X^{\wedge k})_{K(n)} $$
and are therefore homogeneous of degree $k$. 
\end{proof}

More subtly, Kuhn constructed a filtration on $\TAQ^{R}$ \cite{KuhnMcCord} which results in a tower
\begin{equation}\label{eq:Kuhntower}
 \TAQ_{R}(A) \rightarrow \cdots \rightarrow F_k\TAQ_{R}(A) \rightarrow F_{k-1}\TAQ_{R}(A) \rightarrow \cdots.
 \end{equation}
For all $A$ we have an equivalence
\begin{equation}\label{eq:kuhnconv}
\TAQ_R(A) \xrightarrow{\simeq} \holim F_k \TAQ_R(A)
\end{equation}
for the simple reason that Kuhn's filtration of $\TAQ^R$ is exhaustive. 

\begin{thm}[Kuhn \cite{KuhnMcCord}]\label{thm:McCord}
The fibers of the tower (\ref{eq:Kuhntower}) are given by
$$ s^{-1}\Lie_k \wedge^{h\Sigma_k} (A^{\wedge_R k})^\vee \rightarrow F_k\TAQ_R(A) \rightarrow F_{k-1}\TAQ_R(A). $$
\end{thm}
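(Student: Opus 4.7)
The plan is to construct Kuhn's filtration from the natural weight grading on the commutative monad, identify the associated graded via the composition-product bar construction of $\mr{Comm}_R$, and then invoke Ching's Koszul-duality computation (Example~\ref{ex:spectrallie}) to recognize the layers in terms of $s^{-1}\Lie$. Recall from (\ref{eq:FO}) and Example~\ref{ex:comm} that
\[
\mc{F}_{\mr{Comm}_R}(X)\;\simeq\;\bigvee_{k\ge 1}(X^{\wedge_R k})_{h\Sigma_k},
\]
so the $n$-simplices $\mc{F}_{\mr{Comm}_R}^n(A)$ of the bar construction $B_\bullet(\Id,\mc{F}_{\mr{Comm}_R},A)$ which realizes $\TAQ^R(A)$ carry a canonical decomposition indexed by the \emph{total weight} $k$ in the variable $A$ (the total number of $A$-factors appearing after fully expanding the iterated monad).

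The first step is to promote this grading to a filtration of simplicial $R$-modules. The inner face maps (monad multiplication) and all degeneracies preserve $A$-weight, while the outer face, which uses the $\mc{F}_{\mr{Comm}_R}$-algebra structure on $A$, can only decrease it; hence the sub-simplicial object $F_k^\bullet\subseteq B_\bullet(\Id,\mc{F}_{\mr{Comm}_R},A)$ of terms of weight $\le k$ is well defined. Setting $F_k\TAQ^R(A):=|F_k^\bullet|$ and then taking $R$-linear duals produces the descending tower $\{F_k\TAQ_R(A)\}$ of the theorem, and the convergence statement (\ref{eq:kuhnconv}) follows from the fact that every simplex eventually has finite weight. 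The essential computation is that the cofiber $F_k/F_{k-1}$ realizes the weight-$k$ stratum of the bar construction, which combinatorially coincides with the weight-$k$ component of the composition-product bar construction of $\mr{Comm}_R$ with coefficients in $A$:
\[
F_k\TAQ^R(A)/F_{k-1}\TAQ^R(A)\;\simeq\;B\mr{Comm}_R(k)\wedge_{h\Sigma_k}A^{\wedge_R k}.
\]

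To finish, I would invoke Ching's identification $B\mr{Comm}_S(k)\simeq(\partial_k\Id_{\Top_*})^\vee=(s^{-1}\Lie_k)^\vee$ (Example~\ref{ex:spectrallie}), together with base change along $S\to R$, to recognize the homology cofiber as $(s^{-1}\Lie_k)^\vee\wedge_{h\Sigma_k}A^{\wedge_R k}$. Dualizing converts cofibers into fibers, and because $s^{-1}\Lie_k$ is a finite (hence dualizable) $\Sigma_k$-spectrum, the $R$-linear dual of the smash product distributes as
\[
\bigl((s^{-1}\Lie_k)^\vee\wedge_R A^{\wedge_R k}\bigr)^\vee\;\simeq\;s^{-1}\Lie_k\wedge_R(A^{\wedge_R k})^\vee,
\]
while the dual of homotopy orbits is homotopy fixed points; together these yield
\[
\mathrm{fib}\bigl(F_k\TAQ_R(A)\to F_{k-1}\TAQ_R(A)\bigr)\;\simeq\;s^{-1}\Lie_k\wedge^{h\Sigma_k}(A^{\wedge_R k})^\vee,
\]
as claimed.

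The main obstacle is the combinatorial identification in the second step of the $k$-th cofiber as the weight-$k$ piece of $B(\mathbf{1}_R,\mr{Comm}_R,\mathbf{1}_R)$: one must match, $\Sigma_k$-equivariantly and at the level of simplicial $R$-modules, the rooted trees of commutative operations appearing in the iterated $\mc{F}_{\mr{Comm}_R}$ with the standard simplicial model of the operadic bar construction, and verify the comparison is compatible with geometric realization. Handling the $\Sigma_k$-actions coherently is the technical heart of Kuhn's construction in \cite{KuhnMcCord}; the remainder of the argument is formal manipulation of Spanier-Whitehead duality over $R$.
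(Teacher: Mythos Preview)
The paper does not actually give a proof of Theorem~\ref{thm:McCord}: it is stated with attribution to Kuhn \cite{KuhnMcCord} and used as a black box, so there is no ``paper's own proof'' to compare against. What you have written is a reasonable reconstruction of Kuhn's original argument: filter the monadic bar construction $B_\bullet(\Id,\mc{F}_{\Comm_R},A)$ by $A$-weight, observe that passing to associated graded kills exactly the face map which uses the algebra structure of $A$ (so the associated graded is the weight-$k$ summand of $B(\Id,\mc{F}_{\Comm_R},\mr{triv}\,A)\simeq\mc{F}_{B\Comm_R}(A)$), and then invoke Ching's identification $B\Comm\simeq(s^{-1}\Lie)^\vee$ together with $R$-linear duality. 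Your identification of the technical heart---matching the weight-$k$ simplicial object with the $k$th component of the operadic bar construction $\Sigma_k$-equivariantly---is exactly where the work lies in \cite{KuhnMcCord}, and the rest is indeed formal duality. One small terminological wobble: it is the face using the $\mc{F}_{\Comm_R}$-action on $A$ (often written $d_n$ rather than $d_0$) that decreases weight, while the face using the augmentation $\epsilon\colon\mc{F}_{\Comm_R}\to\Id$ preserves it; you have the mathematics right but the phrase ``outer face'' is ambiguous between the two conventions.
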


\begin{cor}\label{cor:TAQtower}
For finite $X$ the Goodwillie tower of the functor $\TAQ_{S_{K(n)}}(S^{(-)}_{K(n)})$ is given by
$$ P_k(\TAQ_{S_{K(n)}}(S^{(-)}_{K(n)}))(X) \simeq F_k \TAQ_{S_{K(n)}}(S^{X}_{K(n)}). $$
\end{cor}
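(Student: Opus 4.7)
The plan is to apply Kuhn's filtration (\ref{eq:Kuhntower}) to $A = S^X_{K(n)}$ over $R = S_{K(n)}$ and to verify that the resulting tower satisfies the universal property of the Goodwillie tower when regarded as a functor of $X$. Convergence of $\TAQ_{S_{K(n)}}(S^X_{K(n)}) \to \holim_k F_k \TAQ_{S_{K(n)}}(S^X_{K(n)})$ is precisely (\ref{eq:kuhnconv}). By the standard uniqueness principle for Goodwillie towers, it then suffices to show that each $F_k$, viewed as a functor of $X$, is $k$-excisive and that the successive fibers are $k$-homogeneous.

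Computing the fibers is the heart of the matter. Theorem~\ref{thm:McCord} identifies the fiber as
\[
s^{-1}\Lie_k \wedge^{h\Sigma_k} \bigl((S^X_{K(n)})^{\wedge_{S_{K(n)}} k}\bigr)^\vee.
\]
For finite $X$ the suspension spectrum $\Sigma^\infty X$ is strongly dualizable in $\Sp_{K(n)}$ with dual $S^X_{K(n)}$; iterating this identification and invoking double duality identifies the $R$-linear dual inside with $(\Sigma^\infty X^{\wedge k})_{K(n)}$. Tate vanishing (Theorem~\ref{thm:tate}) converts $\wedge^{h\Sigma_k}$ into $\wedge_{h\Sigma_k}$ in the $K(n)$-local category, so the fiber is equivalent to
\[
\bigl(s^{-1}\Lie_k \wedge (\Sigma^\infty X)^{\wedge k}\bigr)_{h\Sigma_k, K(n)},
\]
which is manifestly homogeneous of degree $k$ in $X$. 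The $k$-excisiveness of $F_k$ follows by induction on $k$: the bottom stage $F_1$ is the $S_{K(n)}$-linear dual of $S^X_{K(n)}$, hence equivalent to $(\Sigma^\infty X)_{K(n)}$ and linear in $X$, and an extension of a $(k-1)$-excisive functor by a $k$-homogeneous one is $k$-excisive by standard calculus.

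The main technical point to pin down is the identification $\bigl((S^X_{K(n)})^{\wedge_{S_{K(n)}} k}\bigr)^\vee \simeq (\Sigma^\infty X^{\wedge k})_{K(n)}$, and this is also the only place where the finiteness of $X$ enters essentially: strong dualizability in $\Sp_{K(n)}$ is what permits us to trade the contravariant functor $X \mapsto S^X_{K(n)}$ for the covariant $X \mapsto (\Sigma^\infty X)_{K(n)}$ without losing information. Together with Lemma~\ref{lem:Phitower}, this computation yields Kuhn's key observation that the layers of the Goodwillie towers of the source and target of $c^{K(n)}_X$ are abstractly equivalent, which is the structural fact the inductive proof of Theorem~\ref{thm:main} will exploit.
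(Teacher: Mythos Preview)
Your proof is correct and follows essentially the same approach as the paper's. The paper's argument is terser---it simply observes that combining Theorem~\ref{thm:tate} with Theorem~\ref{thm:McCord} identifies the layers of the Kuhn tower as $(s^{-1}\Lie_k \wedge_{h\Sigma_k} X^{\wedge k})_{K(n)}$, hence homogeneous of degree $k$---and leaves implicit both the inductive argument for $k$-excisiveness and the role of convergence~(\ref{eq:kuhnconv}), which you correctly make explicit (and which is in fact needed to conclude that a tower with the right layers is the Goodwillie tower rather than merely \emph{a} tower with the right layers).
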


\begin{proof}
Combining Theorem~\ref{thm:tate} with Theorem~\ref{thm:McCord} shows the layers of the RHS are equivalent to
$$ (s^{-1}\Lie_k \wedge_{h\Sigma_k} X^{\wedge k})_{K(n)}. $$
In particular, they are homogeneous of degree $k$.
\end{proof}

It follows that the comparison map actually induces a natural transformation of towers
$$ P_n(c^{K(n)}_{X}): \Phi_{K(n)}P_k\Id(X) \rightarrow F_k\TAQ_{S_{K(n)}}(S_{K(n)}^X) $$
when restricted to finite $X$.  In fact, the proofs of Lemma~\ref{lem:Phitower} and Corollary~\ref{cor:TAQtower} actually imply that for $X$ finite, the layers of these towers are abstractly equivalent.  Thus, to show that the maps $P_n(c^{K(n)}_X)$ are equivalences, we just need to show that they \emph{induce} equivalences on the layers (which we already know are equivalent)!  This will be accomplished computationally using

\subsection*{The Morava $E$-theory Dyer-Lashof algebra}

Let $E_n$ denote the $n$th Morava $E$-theory spectrum, with 
$$ (E_n)_* \cong W(\FF_{p^n})[[u_1, \ldots, u_{n-1}]][u^\pm]. $$
The ring $(E_n)_0$ has a unique maximal ideal $\mf{m}$.  We shall let 
$$ (E^\wedge_n)_*Z := \pi_*(E_n \wedge Z)_{K(n)} $$
denote the completed $E$-homology of a spectrum $Z$.  If the uncompleted Morava $E_n$-homology is flat over $(E_n)_*$, the completed $E$-homology is the $\mf{m}$-completion of the uncompleted homology.  Let $K_n$ denote the $2$-periodic version of $K(n)$, with
$$ (K_n)_* \cong (E_n)_*/\mf{m} \cong \FF_{p^n}[u^\pm]. $$ 

In \cite{Rezk}, the second author defined a monad\footnote{The monad denoted $\mb{T}$ here is actually a non-unital variant of the monad $\mb{T}$ of \cite{Rezk}.} 
$$ {\mb{T}}: \Mod_{(E_n)_*} \rightarrow \Mod_{(E_n)_*} $$
such that the completed $E$-homology of a $\mr{Comm}$-algebra has the structure of a $\mb{T}$-algebra.  A $\mb{T}$-algebra is basically an algebra over the Morava $E$-theory Dyer-Lashof algebra $\Gamma_n$.  For an $(E_n)_*$-module $M$, the value of the functor $\mb{T} M$ is the free $\Gamma_n$-algebra on $M$ (for a precise description of what is meant by this, consult \cite{Rezk}).

The work of Strickland \cite{Strickland} basically determines the structure of the dual of $\Gamma_n$ in terms of rings of functions on the formal schemes of subgroups of the Lubin-Tate formal groups.
In the case of $n = 1$, the corresponding Morava $E$-theory is $p$-adic $K$-theory, and $\Gamma_1$ is generated by the Adams operation $\psi^p$ with no relations.  In the case of $n = 2$, the explicit structure of $\Gamma_2$ was determined by the second author in \cite{Rezk08} for $p = 2$, and mod $p$ for all primes in \cite{RezkMIC}.  An integral presentation of $\Gamma_2$ has recently been determined by Zhu \cite{Zhu1}.  Very little is known about the explicit structure of $\Gamma_n$ for $n \ge 3$ except that it is Koszul \cite{RezkKoszul} in the sense of Priddy \cite{Priddy}.

For the purpose of our discussion of Theorem~\ref{thm:main}, the only thing we really need to know about $\mb{T}$ is the following theorem of the second author (see \cite{Rezk}):

\begin{thm}
If $(E_n^\wedge)_*Z$ is flat over $(E_n)_*$, then 
the natural transformation
$$ {\mb{T}} (E_n^\wedge)_*Z \rightarrow (E^\wedge_n)_*\mc{F}_{\mr{Comm}} Z $$
induces an isomorphism 
$$ ({\mb{T}} (E_n^\wedge)_*Z)^\wedge_\mf{m} \xrightarrow{\cong} (E_n^\wedge)_*\mc{F}_{\mr{Comm}} Z. $$
\end{thm}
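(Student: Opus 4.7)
The plan is to decompose both sides of the comparison according to the grading by symmetric power degree, and then match them piece by piece using Strickland's description of the Morava $E$-theory of symmetric group classifying spaces. Since $\mc{F}_{\mr{Comm}}(Z) \simeq \bigvee_{i \ge 1} (Z^{\wedge_R i})_{h\Sigma_i}$ and the monad $\mb{T}$ carries a corresponding weight decomposition $\mb{T} M = \bigoplus_{i \ge 1} \mb{T}_i M$, where $\mb{T}_i M$ is the ``free degree-$i$ Dyer-Lashof construction'' on $M$, the natural transformation in the statement is a sum of natural transformations $\mb{T}_i M \to (E_n^\wedge)_*\bigl((Z^{\wedge i})_{h\Sigma_i}\bigr)$ with $M = (E_n^\wedge)_* Z$. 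It therefore suffices to establish, for each fixed $i$, an algebraic formula for the right-hand side and then control the assembly of the pieces under $\mf{m}$-adic completion.

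For a fixed $i$, I would first use the flatness hypothesis to establish a completed Künneth isomorphism $(E_n^\wedge)_*(Z^{\wedge i}) \cong M^{\widehat{\otimes}_{(E_n)_*} i}$; flatness is exactly what is needed to avoid a $\Tor$ obstruction when passing to completed $E$-homology of smash powers. Next, to handle the $\Sigma_i$-homotopy orbits, I would invoke Strickland's theorem: $(E_n)^*(B\Sigma_{p^k})$ (modulo its transfer ideal) is the ring of functions on the formal scheme classifying subgroups of order $p^k$ in the universal deformation of the Lubin-Tate formal group, and is finitely generated and free over $(E_n)^*$. Combined with the Künneth isomorphism, this determines $(E_n^\wedge)_*\bigl((Z^{\wedge i})_{h\Sigma_i}\bigr)$ purely algebraically as a functor of $M$, and by the very definition of $\mb{T}$ in \cite{Rezk} this functor is precisely $(\mb{T}_i M)^\wedge_\mf{m}$. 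On each weight, then, the natural transformation is an isomorphism after $\mf{m}$-completion.

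The main obstacle is the careful accounting of completions when summing over $i$. The target $(E_n^\wedge)_*\mc{F}_{\mr{Comm}}(Z)$ is automatically $\mf{m}$-complete because it is defined via $K(n)$-localization, whereas $\mb{T} M = \bigoplus_i \mb{T}_i M$ is a genuine (uncompleted) direct sum. The key input is that $K(n)$-localization turns the infinite wedge $\bigvee_i (Z^{\wedge i})_{h\Sigma_i}$ into what is, at the level of $(E_n^\wedge)_*$, the $\mf{m}$-adic completion of the direct sum of its weight pieces; this is the standard behavior of $(E_n^\wedge)_*$ on infinite wedges of $K(n)$-locally small objects, reflecting the fact that the $\mf{m}$-adic filtration on each $\mb{T}_i M$ becomes arbitrarily deep as $i$ grows. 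Granted this, the weight-by-weight isomorphisms from the previous paragraph assemble into the global isomorphism $(\mb{T} M)^\wedge_\mf{m} \xrightarrow{\cong} (E_n^\wedge)_*\mc{F}_{\mr{Comm}}(Z)$ claimed in the theorem.
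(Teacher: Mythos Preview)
The survey paper does not actually prove this theorem; it is attributed to the second author and cited from \cite{Rezk}, with no argument given here. Your outline is a faithful sketch of the proof strategy in that source: the weight decomposition of both $\mc{F}_{\mr{Comm}}$ and $\mb{T}$, the completed K\"unneth isomorphism under the flatness hypothesis, and Strickland's identification of $(E_n)^*B\Sigma_i$ (modulo transfers) as the key algebraic input defining $\mb{T}_i$.

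One point deserves more care. Your justification for the assembly step --- that ``the $\mf{m}$-adic filtration on each $\mb{T}_i M$ becomes arbitrarily deep as $i$ grows'' --- is not the right mechanism, and is not true as stated (for instance, when $Z$ is a sphere each $\mb{T}_i M$ is a nonzero free $(E_n)_*$-module and sits in $\mf{m}$-filtration zero). The correct argument is that $(E_n^\wedge)_*$ takes values in $L$-complete $(E_n)_*$-modules in the sense of Hovey--Strickland, and that $K(n)$-localization of an infinite wedge induces, on completed $E$-homology, the $L$-completion of the direct sum rather than the direct sum itself. It is this $L$-completion (which agrees with $\mf{m}$-adic completion on the relevant modules) that matches the left-hand side $(\mb{T} M)^\wedge_\mf{m}$ with the completed $E$-homology of the free commutative algebra. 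With this correction your sketch is sound.
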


There is a ``completed'' variant of the functor $\mc{F}_{\Comm}$:
$$ \widehat{\mc{F}}_\Comm (Z) := \prod_i Z^{i}_{h\Sigma_i}. $$
The following lemma of \cite{BKTAQ} is highly non-trivial, as completed Morava $E$-theory in general behaves badly with respect to products.

\begin{lem}
There is a completed variant of the free $\mb{T}$-algebra functor: 
$$ \widehat{\mb{T}}: \Mod_{E_*} \rightarrow \Alg_\mb{T} $$
and for spectra $Z$ a natural transformation
$$ \widehat{\mb{T}}(E_n^\wedge)_*Z \rightarrow (E_n^\wedge)_*\widehat{\mc{F}}_\Comm Z $$
which is an isomorphism if $(E_n^\wedge)_*Z$ is flat and finitely generated. \end{lem}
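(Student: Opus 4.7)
The plan is to build $\widehat{\mb{T}}$ by formally completing $\mb{T}$ with respect to its natural weight filtration, and then to propagate the preceding theorem through this completion one weight at a time. The monad $\mb{T}$ splits as a sum $\mb{T} = \bigoplus_i \mb{T}_i$ of ``weight $i$'' summands, mirroring the decomposition $\mc{F}_\Comm Z = \bigvee_i Z^{\wedge i}_{h\Sigma_i}$: each $\mb{T}_i M$ encodes the additive structure and Dyer-Lashof operations arising from the $i$-th extended power of $M$. I set
$$ \widehat{\mb{T}}(M) := \prod_i \mb{T}_i(M), $$
and check that the monad structure extends to the product. The unit is unchanged, and the multiplication $\mb{T}\mb{T} \to \mb{T}$ restricts to maps $\mb{T}_i \mb{T}_j \to \mb{T}_{ij}$, so for fixed target weight $k$ only finitely many pairs contribute; the composition $\widehat{\mb{T}}\widehat{\mb{T}} \to \widehat{\mb{T}}$ is therefore well-defined componentwise, making $\widehat{\mb{T}}(M)$ an honest $\mb{T}$-algebra.

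Next, the natural transformation is assembled weightwise. For each $i$, the inclusion $Z^{\wedge i}_{h\Sigma_i} \hookrightarrow \widehat{\mc{F}}_\Comm Z$ of the $i$-th factor induces a map on $(E_n^\wedge)_*$, and the weight-graded refinement of the preceding theorem supplies, whenever $M = (E_n^\wedge)_* Z$ is flat, a map $\mb{T}_i M \to (E_n^\wedge)_* Z^{\wedge i}_{h\Sigma_i}$ which is an isomorphism after $\mf{m}$-completion. Taking the product over $i$ of these composites delivers the desired natural transformation $\widehat{\mb{T}}(E_n^\wedge)_* Z \to (E_n^\wedge)_* \widehat{\mc{F}}_\Comm Z$.

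Finally, suppose $M = (E_n^\wedge)_* Z$ is flat and finitely generated over $(E_n)_*$. Since $(E_n)_*$ is Noetherian local, $M$ is then finite free; each $\mb{T}_i M$ is in turn finitely generated and hence automatically $\mf{m}$-adically complete, so the weightwise comparison maps are honest isomorphisms. The problem reduces to exchanging $(E_n^\wedge)_*$ with the infinite product $\prod_i Z^{\wedge i}_{h\Sigma_i}$. This last step is the main obstacle, since in general $(E_n^\wedge)_*$ does \emph{not} commute with infinite products. My plan is to leverage finiteness: under the hypothesis each $Z^{\wedge i}_{h\Sigma_i}$ becomes $K(n)$-locally dualizable (apply Hovey-Strickland to $Z^{\wedge i}$ together with Theorem~\ref{thm:tate} to swap $\Sigma_i$-orbits for $\Sigma_i$-fixed points $K(n)$-locally). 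The finite partial products $\prod_{i\le k} Z^{\wedge i}_{h\Sigma_i}$ then agree $K(n)$-locally with the corresponding wedges, on which the comparison is an isomorphism by finite additivity of $(E_n^\wedge)_*$ and the preceding theorem. One must then control the tail by showing that the $\mf{m}$-adic depth of $(E_n^\wedge)_* Z^{\wedge i}_{h\Sigma_i}$ grows without bound in $i$; this growth---traceable to finite generation of $M$ together with the transfer/norm congruences governing $\Sigma_i$-fixed points---allows $\mf{m}$-completion to commute with the inverse limit over $k$, and thereby identifies $(E_n^\wedge)_* \widehat{\mc{F}}_\Comm Z$ with $\prod_i (E_n^\wedge)_* Z^{\wedge i}_{h\Sigma_i}$. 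Establishing this tail bound precisely, and reconciling it with the pro-system of $K(n)$-local partial products, is where the technical heart of the argument of \cite{BKTAQ} resides.
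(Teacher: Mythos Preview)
The paper does not actually prove this lemma: it is a survey, and the lemma is simply quoted from \cite{BKTAQ} with the remark that it ``is highly non-trivial, as completed Morava $E$-theory in general behaves badly with respect to products.'' So there is no in-paper proof to compare your proposal against. That said, your outline is broadly in the right spirit---the weight decomposition $\mb{T}=\bigoplus_i\mb{T}_i$, the definition $\widehat{\mb{T}}M=\prod_i\mb{T}_iM$, the observation that $\mb{T}_i\mb{T}_j\to\mb{T}_{ij}$ makes the monad structure extend, and the identification of the product/$(E_n^\wedge)_*$ interchange as the crux are all correct and are exactly what one expects the argument in \cite{BKTAQ} to begin with.

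Where your sketch goes wrong is the proposed mechanism for controlling the tail. You assert that ``the $\mf{m}$-adic depth of $(E_n^\wedge)_* Z^{\wedge i}_{h\Sigma_i}$ grows without bound in $i$,'' but this is false already for $Z=S$: then $Z^{\wedge i}_{h\Sigma_i}\simeq \Sigma^\infty_+B\Sigma_i$, and $(E_n^\wedge)_*B\Sigma_i$ is finite free over $(E_n)_*$ with the unit $S\to\Sigma^\infty_+B\Sigma_i$ splitting off a rank-one summand, so these modules never sink into $\mf{m}$ at all. The interchange of $(E_n^\wedge)_*$ with the product therefore cannot be deduced from a depth/connectivity argument of the kind you describe. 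The genuine input is rather the Hovey--Strickland structure theory of the $K(n)$-local category: each factor is $K(n)$-locally dualizable with finite free $E$-homology, and one must use pro-freeness and $L$-completeness (in the sense of Hovey--Strickland) to identify $(E_n^\wedge)_*$ of the $K(n)$-local product with the $L$-completion of the algebraic product, which here is already complete. Your instinct to invoke dualizability and Theorem~\ref{thm:tate} is on target; it is the depth heuristic that should be replaced by the $L$-complete/pro-free machinery.
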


In \cite{BKTAQ} we construct a version of the Basterra spectral sequence for $E$-theory: for a $K(n)$-local $\mr{Comm}$-algebra $A$ whose $E_n$-homology satisfies a flatness hypothesis, the spectral sequence takes the form
\begin{equation}\label{eq:BSS}
AQ_{{\mb{T}}}^{*,*}((E_n^\wedge)_*A;(K_n)_*) \Rightarrow (K_n)_*\TAQ_{S_{K(n)}}(A).
\end{equation}
Here $AQ_{\mb{T}}^{*,*}(-;M)$ denotes Andre-Quillen cohomology of ${\mb{T}}$-algebras with coefficients in an $E_*$-module $M$ (see \cite{BKTAQ} for a precise definition --- these cohomology groups are closely related to those defined in \cite{GoerssHopkins}). 

\subsection*{The comparison map on $QX$}

The next step in the proof of Theorem~\ref{thm:main} is to prove the following key proposition.

\begin{prop}\label{prop:key}
There is a non-negative integer $N$ so that for all $N$-fold suspension spaces $X$ with $(E_n^\wedge)_*X$ free and finitely generated over $(E_n)_*$, the comparison map
$$ (\Sigma^\infty X)_{K(n)} \simeq \Phi_{K(n)}(QX) \xrightarrow{c^{K(n)}_{QX}} \TAQ_{S_{K(n)}}(S_{K(n)}^{QX}) $$
is an equivalence. 
\end{prop}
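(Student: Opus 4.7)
The source is immediate: by property (3) of the Bousfield-Kuhn functor, $\Phi_{K(n)}(QX) \simeq (\Sigma^\infty X)_{K(n)}$, so the content of the proposition is identifying the target $\TAQ_{S_{K(n)}}(S^{QX}_{K(n)})$ and checking that the comparison map realizes the expected equivalence. The plan is to exhibit $S^{QX}_{K(n)}$ as a (completed) free commutative $S_{K(n)}$-algebra on $(\Sigma^\infty X)^\vee_{K(n)}$, and then read off the target by applying the Basterra spectral sequence (\ref{eq:BSS}) to this free algebra.

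First I would use the reduced Snaith splitting
$$ \Sigma^\infty QX \simeq \bigvee_{i \ge 1} (\Sigma^\infty X)^{\wedge i}_{h\Sigma_i}, $$
which is valid once $X$ is sufficiently connected and provides one constraint on $N$. Dualizing exhibits $S^{QX}_{K(n)}$ as a product of $K(n)$-local function spectra $F((\Sigma^\infty X)^{\wedge i}_{h\Sigma_i}, S_{K(n)})$; the flat and finitely generated hypothesis on $(E_n^\wedge)_* X$ makes $\Sigma^\infty X$ suitably $K(n)$-locally dualizable, and Theorem~\ref{thm:tate} converts homotopy orbits into homotopy fixed points, yielding an equivalence of $\Comm$-algebras $S^{QX}_{K(n)} \simeq \widehat{\mc{F}}_\Comm (\Sigma^\infty X)^\vee_{K(n)}$. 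Applying $(E_n^\wedge)_*$ and invoking the completed free $\mb{T}$-algebra lemma then identifies
$$ (E_n^\wedge)_* S^{QX}_{K(n)} \cong \widehat{\mb{T}}\, M, \qquad M := (E_n^\wedge)_*(\Sigma^\infty X)^\vee_{K(n)}, $$
as a $\mb{T}$-algebra.

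Next I would plug this into (\ref{eq:BSS}). Since the Andr\'e-Quillen cohomology of a (completed) free $\mb{T}$-algebra $\widehat{\mb{T}} M$ with coefficients in $(K_n)_*$ collapses to $\Hom_{(E_n)_*}(M,(K_n)_*)$ concentrated in bidegree $(0,0)$, the spectral sequence degenerates and delivers
$$ (K_n)_* \TAQ_{S_{K(n)}}(S^{QX}_{K(n)}) \cong \Hom_{(E_n)_*}(M,(K_n)_*) \cong (K_n)_*(\Sigma^\infty X)_{K(n)}, $$
the last step being the Morava-module self-duality afforded by the flat and finitely generated hypothesis. Verifying that $c^{K(n)}_{QX}$ actually realizes this isomorphism is then a naturality argument using the construction of $c^{K(n)}$ via the $\TAQ_{S_{K(n)}} \dashv \mr{triv}$ adjunction applied to the canonical map $S^{QX}_{K(n)} \to \mr{triv}(\Sigma^\infty X)^\vee_{K(n)}$ adjoint to the unit $X \to QX$: under our identification of the source $\mr{Comm}$-algebra with a completed free algebra, this map is precisely the projection onto the generators, and abelianization of a free algebra recovers those generators. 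The main obstacle I expect is the Snaith-dualization step: the infinite wedge becomes an infinite product whose $K(n)$-localization and $(E_n^\wedge)_*$-homology are delicate to control, and it is precisely this difficulty that dictates the completed $\mb{T}$-algebra apparatus of \cite{BKTAQ} and the simultaneous hypotheses of high-fold suspension and flat, finitely generated $E$-homology.
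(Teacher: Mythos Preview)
Your argument contains a genuine gap at the Snaith-dualization step, and it is precisely the one the paper singles out. You assert that the chain of equivalences
\[
S_{K(n)}^{QX} \;\simeq\; \left(\widehat{\mc{F}}_\Comm\, S_{K(n)}^X\right)_{K(n)}
\]
is an equivalence of $\Comm$-algebras. It is not. The paper records explicitly (attributing the observation to Kuhn) that this map is \emph{not} an equivalence of $\Comm$-algebras, nor even of non-unital $H_\infty$-ring spectra. The Snaith splitting is an equivalence of spectra only; the multiplicative structure on $S_{K(n)}^{QX}$ coming from the diagonal of $QX$ does not match the free-algebra multiplication under the splitting. Consequently your ``naturality argument'' at the end --- identifying $c^{K(n)}_{QX}$ with the projection of a free algebra onto its generators --- does not go through, because there is no identification of $\Comm$-algebras to transport along.

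The paper's proof repairs this in two moves. First, Lemma~\ref{lem:technical} downgrades the claim: one only gets a map $(E_n^\wedge)_* S_{K(n)}^{QX} \to \widehat{\mb{T}}\,\td{E}_n^*X$ which is an isomorphism of $\mb{T}$-algebras \emph{mod $\mf{m}$}. That is enough to run the Basterra spectral sequence with $(K_n)_*$-coefficients exactly as you do, and your collapse argument is correct there. Second, rather than trying to identify $c^{K(n)}_{QX}$ directly, the paper constructs a one-sided inverse: the counit $\Sigma^\infty QX \to \Sigma^\infty X$ induces a genuine map of $\Comm$-algebras $\mc{F}_{\Comm_{S_{K(n)}}} S_{K(n)}^X \to S_{K(n)}^{QX}$, and applying $\TAQ_{S_{K(n)}}$ gives $\eta_X$ with $\eta_X \circ c^{K(n)}_{QX} \simeq \Id$. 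The proof then finishes by a rank count: $(\td{K}_n)_* X$ is finite, and the spectral sequence shows $(K_n)_*\TAQ_{S_{K(n)}}(S_{K(n)}^{QX})$ has the same size, so a retraction is automatically an equivalence. Your spectral sequence step and your instinct about where the difficulty lies are both right; what is missing is the recognition that the multiplicative failure of the Snaith splitting forces this indirect retraction-plus-counting maneuver in place of a direct identification.
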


We will prove this proposition by showing that the comparison map induces an isomorphism in Morava $K(n)$-homology. The first step is to compute the $K(n)_*$-homology of the the RHS.  This is accomplished in \cite{BKTAQ} with the following technical lemma:

\begin{lem}\label{lem:technical}
For $X$ satisfying the hypotheses of Proposition~\ref{prop:key}, there is a map of $(E_n)_*$-modules
$$ (E^\wedge_n)_* S_{K(n)}^{QX} \rightarrow \widehat{\mb{T}} \tilde{E}_n^*X $$
which is an \emph{isomorphism of ${\mb{T}}$-algebras mod $\mf{m}$}, in the sense that it is an isomorphism mod $\mf{m}$, and commutes with the ${\mb{T}}$-action mod $\mf{m}$. 
\end{lem}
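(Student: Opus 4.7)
The plan is to identify both sides through a chain of natural equivalences and then verify that the resulting map of underlying $(E_n)_*$-modules is compatible with the $\mb{T}$-algebra structure after reduction mod $\mf{m}$. First I would invoke the Snaith (James--Milnor) splitting
$$ \Sigma^\infty QX \simeq \bigvee_{i \geq 1} (\Sigma^\infty X)^{\wedge i}_{h\Sigma_i}, $$
available because $X$ is connected (being an $N$-fold suspension). Dualizing to function spectra and $K(n)$-localizing turns the wedge into a product and homotopy orbits into homotopy fixed points:
$$ S_{K(n)}^{QX} \simeq \prod_{i \geq 1} \bigl( S_{K(n)}^{X^{\wedge i}} \bigr)^{h\Sigma_i}. $$
The flatness and finite generation hypotheses on $(E_n^\wedge)_* X$ let me identify $S_{K(n)}^{X^{\wedge i}}$ with the $i$-fold $S_{K(n)}$-smash power of $S_{K(n)}^X$ and ensure that completed $E_n$-homology of this product is controlled termwise.

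Second I would pass to completed $E_n$-homology and invoke Theorem~\ref{thm:tate}: in the $K(n)$-local setting the Tate spectra $Z^{t\Sigma_i}$ are $K(n)$-acyclic, so the norm map $Z_{h\Sigma_i} \to Z^{h\Sigma_i}$ is a $K(n)$-equivalence, hence an $E_n$-equivalence modulo $\mf{m}$. Applied mod $\mf{m}$ this converts each $h\Sigma_i$ fixed point into the corresponding orbit, yielding the underlying module of a completed free $\mb{T}$-algebra on $\tilde E_n^* X$. To define the desired map on the nose (not just mod $\mf{m}$) I would invoke the preceding lemma
$$ \widehat{\mb{T}}(E_n^\wedge)_* Z \xrightarrow{\cong} (E_n^\wedge)_* \widehat{\mc{F}}_\Comm Z $$
with $Z$ a Spanier--Whitehead dual of $\Sigma^\infty X$, so that $(E_n^\wedge)_* Z \cong \tilde E_n^* X$ and the finiteness hypothesis guarantees the lemma applies. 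The map in the statement is then the composite of this isomorphism (run in reverse) with the map of $(E_n)_*$-modules induced by the splitting-and-duality equivalence above, and the mod-$\mf{m}$ computation shows it is a mod-$\mf{m}$ isomorphism.

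The hard part will be the compatibility with the $\mb{T}$-algebra structure modulo $\mf{m}$. The $\mb{T}$-action on $(E_n^\wedge)_* S_{K(n)}^{QX}$ is induced by the power operations on the commutative $S_{K(n)}$-algebra $S_{K(n)}^{QX}$, which interact subtly with the transfer maps entering the Snaith splitting and with the diagonal of $QX$. I would check this summand by summand: restricted to the $i$-th summand in $\widehat{\mb{T}} \tilde E_n^* X$, the $\mb{T}$-algebra structure is by definition (following \cite{Rezk}) assembled from precisely the $E_n$-theoretic power operations that act on the $h\Sigma_i$ transfer summand of $S_{K(n)}^{QX}$. The failure of these to match integrally is measured by the $E_n$-level Tate construction; this obstruction vanishes mod $\mf{m}$ by Theorem~\ref{thm:tate}, which is why the ``mod $\mf{m}$'' caveat in the statement is not incidental but is exactly what is needed to make the underlying module map a $\mb{T}$-algebra map after reduction.
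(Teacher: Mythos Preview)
Your construction of the underlying $(E_n)_*$-module map is exactly the heuristic the paper records: Snaith splitting, dualize, swap homotopy fixed points for homotopy orbits via Tate vanishing, and recognize $\widehat{\mc{F}}_{\Comm}$ on the dual. The paper does not prove the lemma here (it is deferred to \cite{BKTAQ}), but immediately after stating it the authors issue a warning that bears directly on your last paragraph: the composite equivalence
\[
S_{K(n)}^{QX} \simeq \left( \widehat{\mc{F}}_{\Comm}\, S_{K(n)}^X \right)_{K(n)}
\]
is \emph{not} an equivalence of $\Comm$-algebras, nor even of non-unital $H_\infty$-ring spectra (an observation they attribute to Kuhn). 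This is precisely the point at which your sketch becomes too optimistic.

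Your claim that ``the failure of these to match integrally is measured by the $E_n$-level Tate construction'' is the step that does not go through as stated. Tate vanishing tells you that norm maps are $K(n)$-equivalences, which handles the passage between $(-)_{h\Sigma_i}$ and $(-)^{h\Sigma_i}$ on the level of modules. But the $\mb{T}$-action on $(E_n^\wedge)_*\, S_{K(n)}^{QX}$ comes from the genuine $\Comm$-algebra structure on $S_{K(n)}^{QX}$, while the $\mb{T}$-action on $\widehat{\mb{T}}\,\tilde E_n^* X$ is the free one; since the Snaith equivalence is not multiplicative, there is no a priori reason the two $\mb{T}$-actions correspond, and the discrepancy is not obviously a Tate obstruction. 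The actual argument in \cite{BKTAQ} requires explicit computation with the Morava $E$-theory Dyer-Lashof algebra to verify the mod-$\mf{m}$ compatibility by hand; this is exactly the ``careful work'' the introduction alludes to. Your proposal correctly isolates where the difficulty lies, but the sentence asserting that the obstruction is Tate and hence vanishes mod $\mf{m}$ is not a proof --- it is the statement of the lemma rephrased.
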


Heuristically, this lemma might seem to follow from Theorem~\ref{thm:tate} and the Snaith splitting:
\begin{align*}
S_{K(n)}^{QX} 
& \simeq S_{K(n)}^{\bigvee_i X^{i}_{h\Sigma_i}} \\
& \simeq \prod_i \left( S_{K(n)}^{X^{i}} \right)^{h\Sigma_i} \\
& \simeq \left(\prod_i \left( S_{K(n)}^{X^{i}} \right)_{h\Sigma_i} \right)_{K(n)} \\
& \simeq \left( \widehat{\mc{F}}_\Comm S_{K(n)}^X \right)_{K(n)}.
\end{align*}
However, as was pointed out to us by Nick Kuhn, this is \emph{not} an equivalence of $\Comm$-algebras (or even non-unital $H_\infty$-ring spectra)!  Nevertheless, Lemma~\ref{lem:technical} establishes that on Morava $E$-theory, this sequence of equivalences induces an isomorphism of $\mb{T}$-algebras mod $\mf{m}$. 

\begin{proof}[Proof of Proposition~\ref{prop:key}]
The natural transformation
$$  \Sigma^\infty QX = \Sigma^\infty \Omega^\infty \Sigma^\infty X  \rightarrow \Sigma^\infty X  $$
induces a natural transformation
$$ S_{K(n)}^{X} \rightarrow S_{K(n)}^{QX} $$
of spectra, hence a natural transformation
$$ \mc{F}_{\Comm_{S_{K(n)}}} S_{K(n)}^{X} \rightarrow S_{K(n)}^{QX} $$
of $\Comm$-algebras.  We thus get a natural transformation
\begin{align*}
\TAQ_{S_{K(n)}}(S_{K(n)}^{QX}) \xrightarrow{\eta_X} & \TAQ_{S_{K(n)}}(\mc{F}_{\Comm_{S_{K(n)}}} S_{K(n)}^{X}) \\
\simeq & (\Sigma^\infty X)_{K(n)} \\
\simeq & \Phi_{K(n)}(QX).
\end{align*}
It can be shown that $\eta_X \circ c^{K(n)}_{QX} \simeq \Id$.  Since $(\td{K}_n)_*X$ is finite, it suffices to show that $(K_n)_*\TAQ_{S_{K(n)}}(S_{K(n)}^X)$ is abstractly isomorphic to $(\td{K}_n)_*X$.  This is proven using the Basterra spectral sequence (\ref{eq:BSS}).  The spectral sequence collapses to the desired result as we have (using Lemma~\ref{lem:technical})
\begin{align*}
AQ^{s,*}_\mb{T}((E_n^\wedge)_* S^{QX}_{K(n)}; (K_n)_*)
& \cong AQ^{s,*}_\mb{T}(\widehat{\mb{T}}\td{E}_n^*X; (K_n)_*) \\
& \cong AQ^{s,*}_\mb{T}(\mb{T}\td{E}_n^*X; (K_n)_*) \\
& \cong 
\begin{cases}
(\td{K}_n)_*X, & s = 0, \\
0, & s > 0.
\end{cases}
\end{align*}
\end{proof}

\subsection*{The comparison map on spheres}

We now outline the proof of Theorem~\ref{thm:main}.  Let $X = S^q$.  The following strong convergence theorem of Arone-Mahowald \cite{AroneMahowald} is crucial.

\begin{thm}[Arone-Mahowald]\label{thm:AroneMahowald}
The natural transformation
$$ \Phi_{K(n)}(X) \rightarrow  \Phi_{K(n)} P_{k}\Id(X) $$
is an equivalence for $q$ odd and $k = p^n$, or $q$ even and $k = 2p^n$. 
\end{thm}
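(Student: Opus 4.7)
The plan is to combine two ingredients: the convergence of the Goodwillie tower of the identity at spheres, and the vanishing after $\Phi_{K(n)}$ of every tower layer beyond the claimed cutoff. First I would observe that, because $\Phi_{K(n)}$ preserves fiber sequences (property~(2) of the Bousfield-Kuhn functor), applying it to the Goodwillie tower of $\Id$ evaluated at $S^q$ yields a tower whose fibers are $\Phi_{K(n)} D_k\Id(S^q)$. The layer formula of Section~\ref{sec:proof}, combined with property~(3) of the Bousfield-Kuhn functor, gives
$$ \Phi_{K(n)} D_k\Id(S^q) \simeq \bigl(\partial_k\Id \wedge S^{qk}\bigr)_{h\Sigma_k,\,K(n)}. $$
Using Example~\ref{ex:spectrallie} to identify $\partial_k\Id \simeq s^{-1}\Lie_k$ with a shift of the Spanier-Whitehead dual of the reduced partition complex $|\Pi_k|$, the problem reduces to showing that, for $q$ odd and $k > p^n$ (resp.\ $q$ even and $k > 2p^n$), the $\Sigma_k$-orbit spectrum $(|\Pi_k|^\diamond \wedge S^{qk})_{h\Sigma_k}$ is $K(n)$-acyclic.

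The heart of the proof is then the computation of the mod $p$ cohomology of these orbit spectra as modules over the Steenrod algebra $\mc{A}_p$. I would filter $|\Pi_k|$ by subcomplexes adapted to partitions whose block sizes are prescribed $p$-powers, pass to $\Sigma_k$-orbits using a transfer/fixed-point argument (essentially reducing to the normalizers of elementary abelian $p$-subgroups of $\Sigma_k$), and identify the associated graded in terms of Steinberg modules for $GL_i(\FF_p)$ and their Dickson invariants. The outcome is twofold: (a) the cohomology vanishes unless $k = p^i$ (odd $q$) or $k = 2p^i$ (even $q$); and (b) whenever $i > n$, the surviving $\mc{A}_p$-module is free over the exterior subalgebra $E(Q_0,\ldots,Q_{n-1})$ generated by the first $n+1$ Milnor primitives.

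From (b), $K(n)$-acyclicity follows by a standard Milnor-operation argument: freeness over $E(Q_0,\ldots,Q_{n-1})$ forces the Atiyah-Hirzebruch spectral sequence computing $K(n)_*$ to collapse to zero. Combined with Goodwillie convergence for the identity at $S^q$ (which makes $S^q \simeq \holim_k P_k\Id(S^q)$ and passes through $\Phi_{K(n)}$ because the layers above the cutoff are trivial), the layerwise vanishing produced above forces $\Phi_{K(n)}(S^q) \to \Phi_{K(n)} P_k\Id(S^q)$ to be an equivalence at $k = p^n$ (odd $q$) or $k = 2p^n$ (even $q$).

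The hard part is unambiguously the Steenrod-module computation of the second paragraph: extracting the precise $\mc{A}_p$-action on $H^*((|\Pi_k|^\diamond\wedge S^{qk})_{h\Sigma_k};\FF_p)$ from the topology of the partition poset and the $\Sigma_k$-action, and then pinning down $E(Q_0,\ldots,Q_{n-1})$-freeness in exactly the asserted range. This is where the Cohen-Macaulay features of $|\Pi_k|$, the Steinberg/Dickson representation theory of $GL_i(\FF_p)$, and the Nishida relations governing the interaction of Steenrod and Dyer-Lashof operations all have to come together; the remaining steps are essentially formal consequences of the machinery set up earlier in the paper.
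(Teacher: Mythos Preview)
The paper does not supply a proof of this theorem: it is stated with attribution to Arone--Mahowald and then used as a black-box input to the proof of Theorem~\ref{thm:main}. So there is no in-paper argument to compare your proposal against. Your outline is nonetheless a reasonable high-level sketch of the actual Arone--Mahowald proof (identify the layers with orbit spectra of the partition complex, compute their mod~$p$ cohomology as Steenrod modules via the Steinberg/Dickson picture, and deduce $K(n)$-acyclicity).

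Two points in your sketch need correction. First, the Milnor-primitive bookkeeping is off: you write $E(Q_0,\ldots,Q_{n-1})$ and then call these ``the first $n+1$ Milnor primitives'', and in any case freeness over $E(Q_0,\ldots,Q_{n-1})$ yields $K(j)$-acyclicity only for $j\le n-1$, not for $K(n)$. What Arone--Mahowald actually prove is that for $k=p^i$ (odd-sphere case) the mod~$p$ cohomology of the $k$th layer is free over the finite subalgebra $\mc{A}_{i-1}$, hence over $E(Q_0,\ldots,Q_{i-1})$; the hypothesis $i>n$ then gives freeness over $E(Q_n)$, and it is $Q_n$ that appears as the first nontrivial Atiyah--Hirzebruch differential for $K(n)$ and forces acyclicity.

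Second, the clause ``passes through $\Phi_{K(n)}$ because the layers above the cutoff are trivial'' hides a genuine step. Triviality of $\Phi_{K(n)}D_k\Id(S^q)$ for $k$ beyond the cutoff shows that the tower $\{\Phi_{K(n)}P_k\Id(S^q)\}_k$ is eventually constant, but it does not by itself identify $\Phi_{K(n)}(S^q)$ with that constant value: one still has to show that $\Phi_{K(n)}$ of the fiber of $S^q\to P_N\Id(S^q)$ vanishes, and that fiber is an \emph{inverse limit} of spaces with vanishing $\Phi_{K(n)}$, which is not automatic. Arone--Mahowald handle this with a separate connectivity argument (the connectivity of $S^q\to P_k\Id(S^q)$ tends to infinity with $k$, so for any fixed finite type~$n$ complex $V$ the map eventually induces an isomorphism on $v_n^{-1}\pi_*(-;V)$). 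You should flag this step rather than absorb it into ``Goodwillie convergence''.
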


The basic strategy is to attempt to apply Proposition~\ref{prop:key} to the Bousfield-Kan cosimplicial resolution
$$ X \rightarrow Q^{\bullet+1}X = \left( QX \Rightarrow QQX \Rrightarrow \cdots \right). $$
We first assume that the dimension $q$ of the sphere $X = S^q$ is large and odd.  Unfortunately, for $s \ge 1$,  $Q^s X$ does not satisfy the finiteness hypotheses of Proposition~\ref{prop:key} required to deduce that the comparison map is an equivalence.
We instead consider the diagram
\begin{equation}\label{diag:main}
\xymatrix{
\Phi_{K(n)}(X) \ar[d]_{c^{K(n)}_X} \ar[r] &
\Tot \Phi_{K(n)} P_{p^n}(Q^{\bullet+1})(X) \ar[d]^{\simeq} \\
\TAQ_{S_{K(n)}}(S^X_{K(n)}) \ar[r] &  
\Tot \TAQ_{S_{K(n)}}\left(S^{P_{p^n}(Q^{\bullet+1})(X)}_{K(n)}\right) 
}
\end{equation}
In the above diagram, the right vertical map is an equivalence using Proposition~\ref{prop:key}: the Snaith splitting may be iterated to give an equivalence \cite{AroneKankaanrintaSnaith}
$$ P_{p^n}(Q^{s+1})(X) \simeq QY^s $$
where the space $Y^s$ does satisfy the hypotheses of Proposition~\ref{prop:key}.
Using finiteness properties of the cosimplicial space $Y^\bullet$, we show in \cite{BKTAQ} that the top horizontal map
$$ \Phi_{K(n)}(X) 
\simeq \Phi_{K(n)} P_{p^n}\Id(X) \rightarrow \Tot \Phi_{K(n)} P_{p^n}(Q^{\bullet+1})(X) $$
of (\ref{diag:main}) is an equivalence.  It follows that the comparison map has a weak retraction when restricted to large dimensional odd spheres $X$:
$$
\xymatrix{
\Phi_{K(n)}X \ar[rr]^\simeq \ar[dr]_{c^{K(n)}_X} && \Phi_{K(n)} X \\
& \TAQ_{S_{K(n)}}(S_{K(n)}^X) \ar[ur] 
} 
$$
Using standard methods of Goodwillie calculus (or more specifically, Weiss calculus in this case) it follows that for $X$ a large dimensional odd sphere, the induced map on Goodwillie towers 
\begin{equation}\label{eq:towercomp}
 \{P_k\Phi_{K(n)}(X)\}_k \xrightarrow{c^{K(n)}} \{F_k \TAQ_{S_{K(n)}}(S_{K(n)}^X)\}_k
 \end{equation}
has a weak retraction.  The theorem (for $X$ a large dimensional odd sphere) follows from the fact that (1) the layers of the towers are abstractly equivalent, and (2) the layers of the towers have finite $K(n)$-homology.
Since Goodwillie derivatives are determined by the values of the functors on large dimensional spheres, it follows that the induced map of symmetric sequences
\begin{equation}\label{eq:partialcomp}
\partial_*\Phi_{K(n)} \xrightarrow{c^{K(n)}} \partial_*(\TAQ_{S_{K(n)}}(S^{(-)}_{K(n)}))
\end{equation}
is an equivalence.  It follows that the map (\ref{eq:towercomp}) is actually an equivalence of towers for \emph{all} spheres $X$.  The theorem now follows from Theorem~\ref{thm:AroneMahowald} and (\ref{eq:kuhnconv}). 

\section{Consequences}\label{sec:consequences}

We begin this section by explaining how our result for spheres actually implies that the comparison map is an equivalance on the larger class of finite $\Phi_{K(n)}$-\emph{good spaces}.  We also survey some computational applications of our theory, and end the section with some questions.

\subsection*{$\Phi_{K(n)}$-good spaces}

We observe that our method of proving Theorem~\ref{thm:main} actually yields a stronger result.

\begin{thm}\label{thm:main2}
For $X$ any finite complex, the comparison map gives an equivalence of towers
$$
 \{P_k\Phi_{K(n)}(X)\}_k \xrightarrow[\simeq]{c^{K(n)}} \{F_k \TAQ_{S_{K(n)}}(S_{K(n)}^X)\}_k
$$
and therefore an equivalence
$$ c^{K(n)}_{X}: P_\infty\Phi_{K(n)}(X) \xrightarrow{\simeq} \TAQ_{S_{K(n)}}(S_{K(n)}^X). $$
\end{thm}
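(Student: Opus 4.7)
The plan is to upgrade the sphere argument from the proof of Theorem~\ref{thm:main} to a statement about entire Goodwillie towers, and then invoke the equivalence of derivatives~(\ref{eq:partialcomp}) uniformly over all finite inputs. By Lemma~\ref{lem:Phitower}, the Goodwillie tower of $\Phi_{K(n)}$ at any $X$ is $\{\Phi_{K(n)} P_k\Id(X)\}_k$, and by Corollary~\ref{cor:TAQtower}, for finite $X$ Kuhn's filtration $\{F_k \TAQ_{S_{K(n)}}(S^X_{K(n)})\}_k$ realizes the Goodwillie tower of $Y \mapsto \TAQ_{S_{K(n)}}(S^Y_{K(n)})$ at $X$. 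Both towers have $k$th layer (abstractly) equivalent to $(s^{-1}\Lie_k \wedge_{h\Sigma_k} X^{\wedge k})_{K(n)}$. Since the comparison map $c^{K(n)}$ is natural in $X$, functoriality of $P_k$ promotes it to a map of towers $\{P_k(c^{K(n)}_X)\}_k$.

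The first step is to verify that $P_k(c^{K(n)}_X)$ is an equivalence on $k$th layers for every finite $X$. Both layers are $k$-homogeneous as functors of $X$, so the induced natural transformation between them is determined up to equivalence by its effect on $k$th Goodwillie derivatives. But the proof of Theorem~\ref{thm:main} established precisely that the comparison map induces an equivalence of symmetric sequences~(\ref{eq:partialcomp}): $\partial_*\Phi_{K(n)} \simeq \partial_*(\TAQ_{S_{K(n)}}(S^{(-)}_{K(n)}))$. Consequently the map on $k$th layers is an equivalence for every $k$ and every finite $X$.

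The second step is an induction up the tower. The base case $P_0(c^{K(n)}_X)$ is trivial since both functors are reduced, and given that $P_{k-1}(c^{K(n)}_X)$ is an equivalence, comparing the fiber sequences $D_k \to P_k \to P_{k-1}$ on each side and applying the equivalence on $k$th layers from the previous step upgrades the conclusion to $k$. Passing to the homotopy inverse limit then gives an equivalence $P_\infty \Phi_{K(n)}(X) \simeq P_\infty(\TAQ_{S_{K(n)}}(S^{(-)}_{K(n)}))(X)$; the target is identified with $\TAQ_{S_{K(n)}}(S^X_{K(n)})$ via~(\ref{eq:kuhnconv}), completing the proof.

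The main obstacle is essentially bookkeeping rather than new input: one must check carefully that the equivalence of derivatives constructed during the proof of Theorem~\ref{thm:main} is \emph{literally} the transformation induced by $c^{K(n)}$ on $k$th derivatives, and not merely an abstract equivalence of symmetric sequences coming from the parallel descriptions of the two layers. Once this identification is unambiguous, and once one confirms the naturality of Kuhn's filtration in $X$ (so that it genuinely defines a natural tower to which $c^{K(n)}$ maps), the argument becomes a purely formal induction along the Goodwillie tower.
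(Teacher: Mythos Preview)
Your proposal is correct and follows essentially the same route as the paper's own proof, which is a one-liner invoking the equivalence of derivatives~(\ref{eq:partialcomp}) established in the course of proving Theorem~\ref{thm:main}. Your added care about checking that~(\ref{eq:partialcomp}) is literally the map induced by $c^{K(n)}$ (rather than an abstract equivalence) is appropriate, and the paper handles this by explicitly labeling the map in~(\ref{eq:partialcomp}) as $c^{K(n)}$; otherwise your expansion of the induction up the tower and the passage to the limit via~(\ref{eq:kuhnconv}) is exactly the intended argument.
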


\begin{proof}
This follows from the equivalence (\ref{eq:partialcomp}).  Note the restriction to finite complexes is necessary as the target functor is not finitary.
\end{proof}

We will say that a space $X$ is \emph{$\Phi_{K(n)}$-good} if the map
\begin{equation}\label{eq:vncomp}
 \Phi_{K(n)}(X) \rightarrow \holim_k P_k(\Phi_{K(n)})(X)
 \end{equation}
is an equivalence.

\begin{cor}\label{cor:main}
A finite space $X$ is $\Phi_{K(n)}$-good if and only if the comparison map
$$ c^{K(n)}_X: \Phi_{K(n)}(X) \rightarrow \TAQ_{S_{K(n)}}(S^X_{K(n)}) $$
is an equivalence.
\end{cor}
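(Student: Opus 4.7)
The plan is to deduce Corollary~\ref{cor:main} directly from Theorem~\ref{thm:main2} together with the definition of $\Phi_{K(n)}$-goodness, by factoring the comparison map through the Goodwillie limit.

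First I would observe that by naturality of the Goodwillie tower construction, the comparison map $c^{K(n)}_X$ factors as
$$ \Phi_{K(n)}(X) \xrightarrow{\alpha_X} P_\infty \Phi_{K(n)}(X) \xrightarrow{\bar c^{K(n)}_X} P_\infty\bigl(\TAQ_{S_{K(n)}}(S^{(-)}_{K(n)})\bigr)(X), $$
where $\alpha_X$ is the canonical map to the homotopy limit of the Goodwillie tower and $\bar c^{K(n)}_X$ is the map of towers induced by the natural transformation $c^{K(n)}$. For finite $X$, Corollary~\ref{cor:TAQtower} identifies the Goodwillie tower of $\TAQ_{S_{K(n)}}(S^{(-)}_{K(n)})$ evaluated at $X$ with Kuhn's filtration $\{F_k\TAQ_{S_{K(n)}}(S^X_{K(n)})\}_k$, and the exhaustive property (\ref{eq:kuhnconv}) of that filtration supplies a natural equivalence
$$ P_\infty\bigl(\TAQ_{S_{K(n)}}(S^{(-)}_{K(n)})\bigr)(X) \simeq \TAQ_{S_{K(n)}}(S^X_{K(n)}). $$

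Next I would invoke Theorem~\ref{thm:main2}, which asserts precisely that $\bar c^{K(n)}_X$ is an equivalence for every finite $X$ (indeed, it is an equivalence of towers). Consequently, under this identification, the comparison map $c^{K(n)}_X$ agrees with the canonical map $\alpha_X\colon \Phi_{K(n)}(X)\to \holim_k P_k\Phi_{K(n)}(X)$, up to an equivalence on the target. Since $X$ being $\Phi_{K(n)}$-good is by definition the statement that $\alpha_X$ is an equivalence, both implications of the corollary follow immediately.

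Essentially all the content is already packaged in Theorem~\ref{thm:main2}; Corollary~\ref{cor:main} is a formal consequence, so there is no real obstacle. The only point demanding any care is the identification of the target $P_\infty$ with the target itself for finite $X$, which relies on the fact that Kuhn's filtration is exhaustive even though the functor $\TAQ_{S_{K(n)}}(S^{(-)}_{K(n)})$ fails to be finitary in $X$; for finite $X$ this issue does not arise because the tower is applied to a fixed finite complex and we are not commuting the limit past a filtered colimit.
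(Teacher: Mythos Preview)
Your argument is correct and matches the paper's intended deduction: the corollary is stated immediately after Theorem~\ref{thm:main2} without a separate proof, precisely because Theorem~\ref{thm:main2} gives the equivalence $P_\infty\Phi_{K(n)}(X)\simeq \TAQ_{S_{K(n)}}(S^X_{K(n)})$ for finite $X$, so $c^{K(n)}_X$ is an equivalence if and only if the map $\Phi_{K(n)}(X)\to P_\infty\Phi_{K(n)}(X)$ is, which is the definition of $\Phi_{K(n)}$-goodness.
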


Theorem~\ref{thm:AroneMahowald} clearly implies spheres are
$\Phi_{K(n)}$-good.  The functor $\Phi_{K(n)}$ preserves all fiber sequences, but it seems the target of the comparison map is not as robust.

\begin{lem}
The functor $\TAQ_{S_{K(n)}}(S^{(-)}_{K(n)})$ preserves products of finite spaces.
\end{lem}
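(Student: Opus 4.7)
The plan is to identify $S^{X \times Y}_{K(n)}$ with the coproduct $S^X_{K(n)} \amalg S^Y_{K(n)}$ in the category of $K(n)$-local non-unital commutative $S_{K(n)}$-algebras, and then apply the fact (Corollary~\ref{cor:TAQ}) that $\TAQ_{S_{K(n)}}$ represents derived mapping spaces of $\Comm$-algebras, which converts coproducts into products.

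For the identification, I would pass to augmented commutative $S$-algebras using the equivalence of Example~\ref{ex:comm}: the augmented algebra corresponding to the non-unital algebra $S^X$ is $S^{X_+}$. Since $X$ and $Y$ are finite, Spanier-Whitehead duality together with the identification $(X \times Y)_+ \simeq X_+ \wedge Y_+$ gives
\[
S^{(X \times Y)_+} \simeq S^{X_+ \wedge Y_+} \simeq S^{X_+} \wedge_S S^{Y_+},
\]
and the right-hand side is precisely the coproduct of $S^{X_+}$ and $S^{Y_+}$ in augmented commutative $S$-algebras. The maps realizing this coproduct are the algebra maps induced by the projections $X \times Y \to X, Y$, which intertwine the diagonals and hence preserve the cup-product structure. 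Taking augmentation ideals yields $S^{X \times Y} \simeq S^X \amalg S^Y$ in non-unital commutative $S$-algebras; since $K(n)$-localization is a symmetric monoidal left adjoint, it preserves this coproduct.

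With the identification in hand, Corollary~\ref{cor:TAQ} identifies $\Omega^\infty \Sigma^n \TAQ_{S_{K(n)}}(S^{X\times Y}_{K(n)})$ with the derived mapping space $\ul{\Alg}_{\Comm}(S^X_{K(n)} \amalg S^Y_{K(n)}, \mr{triv}\,\Sigma^n S_{K(n)})$, which splits as a product by the universal property of coproducts. These decompositions are compatible with the spectrum structure maps as $n$ varies, giving the required equivalence of spectra. The main obstacle lies in the first step, namely verifying the equivalence at the level of commutative algebras (not merely at the level of underlying spectra): this is the operad-enhanced K\"unneth theorem, which depends essentially on finiteness of $X$ and $Y$ through Spanier-Whitehead duality.
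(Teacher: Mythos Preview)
Your proof is correct and rests on the same key identification as the paper's, namely the equivalence of augmented commutative $S$-algebras $S^{(X\times Y)_+}\simeq S^{X_+}\wedge S^{Y_+}$. The only difference is in the final step: the paper simply invokes that $\TAQ$ is excisive (so it sends this pushout of algebras to a pushout of modules, and dualizing gives the product for $\TAQ_{S_{K(n)}}$), whereas you unwind the same fact through the mapping-space description of Corollary~\ref{cor:TAQ}. Since $\TAQ^{\mc{O}}$ is a left adjoint, sending coproducts to coproducts is immediate, and your route via $\ul{\Alg}_{\Comm}(-,\mr{triv}\,\Sigma^k S_{K(n)})$ is just the representable reformulation of this; either way the content is the same.
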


\begin{proof}
This follows from the fact that $\TAQ$ is excisive, together with the fact that there is an equivalence of augmented commutative $S$-algebras 
$$ S^{X \times Y_+} \simeq S^{X_+} \wedge S^{Y_+}. $$
\end{proof}

\begin{cor}
The product of finite $\Phi_{K(n)}$-good spaces is $\Phi_{K(n)}$-good.
\end{cor}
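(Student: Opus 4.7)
The plan is to combine the lemma with the dual fact for $\Phi_{K(n)}$ and invoke naturality of the comparison map, reducing to the characterization of goodness by $c^{K(n)}$ in Corollary~\ref{cor:main}.

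First, I would show that $\Phi_{K(n)}$ also splits on products of pointed spaces. Given pointed $X, Y$, the projection $X \times Y \to Y$ is a fibration with fiber $X$ over $*_Y$, so we have a fiber sequence $X \to X \times Y \to Y$, and this fiber sequence is split by the basepoint inclusion $s_Y\colon Y \to X \times Y$, $y \mapsto (*_X, y)$. Since $\Phi_{K(n)}$ is reduced and preserves fiber sequences (property (2) of the Bousfield--Kuhn functor), we obtain a split fiber sequence of spectra, and because finite products and coproducts coincide in spectra, this yields a natural equivalence
\[
\Phi_{K(n)}(X \times Y) \simeq \Phi_{K(n)}(X) \vee \Phi_{K(n)}(Y),
\]
with splitting induced by the two projections and basepoint inclusions.

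Next, by the preceding lemma,
\[
\TAQ_{S_{K(n)}}(S^{X \times Y}_{K(n)}) \simeq \TAQ_{S_{K(n)}}(S^X_{K(n)}) \vee \TAQ_{S_{K(n)}}(S^Y_{K(n)}),
\]
with splitting induced (contravariantly) by the same projections and basepoint inclusions at the level of underlying spaces. Applying naturality of $c^{K(n)}$ to these structure maps identifies $c^{K(n)}_{X \times Y}$ with the wedge $c^{K(n)}_X \vee c^{K(n)}_Y$ under the splittings above. Since $X \times Y$ remains finite, Corollary~\ref{cor:main} tells us that $X \times Y$ is $\Phi_{K(n)}$-good if and only if $c^{K(n)}_{X \times Y}$ is an equivalence. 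By hypothesis $X$ and $Y$ are finite $\Phi_{K(n)}$-good, so the same corollary makes $c^{K(n)}_X$ and $c^{K(n)}_Y$ equivalences, and hence their wedge is an equivalence as well.

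The only non-formal ingredient is the compatibility of the two splittings under the comparison map, which is the potential obstacle; but this reduces immediately to naturality of $c^{K(n)}$ applied to the projections $X \times Y \rightrightarrows X, Y$ and their basepoint sections, together with the fact that both splittings constructed above are built from precisely these maps. Thus the claim follows with no additional computation.
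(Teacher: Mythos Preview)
Your proof is correct and is essentially the argument the paper leaves implicit: the paper states the corollary immediately after the lemma without proof, and your reasoning---that $\Phi_{K(n)}$ preserves products via its preservation of fiber sequences, that $\TAQ_{S_{K(n)}}(S^{(-)}_{K(n)})$ preserves products of finite spaces by the lemma, and that naturality of $c^{K(n)}$ together with Corollary~\ref{cor:main} then reduces $\Phi_{K(n)}$-goodness of $X\times Y$ to that of $X$ and $Y$---is exactly the intended deduction.
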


We shall say that a fiber sequence of finite spaces
$$ F \rightarrow E \rightarrow B $$
is \emph{$K(n)$-cohomologically Eilenberg-Moore} if the map of augmented commutative $S$-algebras
$$ S^{E_+} \wedge_{S^{B_+}} S \rightarrow S^{F_+} $$
is a $K(n)$-equivalence.  The motivation behind this terminology is that with this condition the associated cohomological Eilenberg-Moore spectral sequence converges \cite[Sec.~IV.6]{EKMM}
$$ \Tor^{*,*}_{K(n)^*(B)}(K(n)^*(F), K(n)^*) \Rightarrow K(n)^*(E). $$

The following lemma follows immediately from the excisivity of $\TAQ$.

\begin{lem}
Suppose that 
$$ F \rightarrow E \rightarrow B $$
is a fiber sequence of finite spaces which is $K(n)$-cohomologically Eilenberg-Moore. Then the induced sequence
$$ \TAQ_{S_{K(n)}}(S_{K(n)}^F) \rightarrow \TAQ_{S_{K(n)}}(S_{K(n)}^E) \rightarrow \TAQ_{S_{K(n)}}(S_{K(n)}^B) $$
is a fiber sequence.
\end{lem}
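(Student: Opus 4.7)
The plan is to apply excisivity of topological Andr\'e-Quillen homology to the homotopy pushout square of commutative $S_{K(n)}$-algebras produced by the Eilenberg--Moore hypothesis, and then to dualize.

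First I would record the bookkeeping that, under the conventions of Section~\ref{sec:Koszul}, for any pointed space $X$ the augmented commutative $S_{K(n)}$-algebra $S^{X_+}_{K(n)}$ has augmentation ideal $S^X_{K(n)}$, and so
$\TAQ^{S_{K(n)}}(S^{X_+}_{K(n)}) \simeq \TAQ^{\Comm_{S_{K(n)}}}(S^X_{K(n)})$
(and likewise on the cohomological side $\TAQ_{S_{K(n)}}$). The $K(n)$-cohomological Eilenberg--Moore condition is exactly the assertion that the commutative square
$$
\xymatrix{
S^{B_+}_{K(n)} \ar[r]\ar[d] & S^{E_+}_{K(n)} \ar[d] \\
S_{K(n)} \ar[r] & S^{F_+}_{K(n)}
}
$$
is a homotopy pushout in augmented commutative $S_{K(n)}$-algebras; equivalently, after passing to augmentation ideals, a homotopy pushout of $\Comm_{S_{K(n)}}$-algebras.

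Next I would apply $\TAQ^{S_{K(n)}}$ and invoke property (1) of Section~\ref{sec:Koszul} (excisivity): the square above is sent to a homotopy pushout, equivalently a homotopy pullback, of $S_{K(n)}$-modules. Since $S_{K(n)}$ viewed as an augmented $S_{K(n)}$-algebra over itself has trivial augmentation ideal, $\TAQ^{S_{K(n)}}(S_{K(n)}) \simeq \ast$, so the bottom-left corner of the resulting square vanishes and what remains is a cofiber sequence of $S_{K(n)}$-modules
$$ \TAQ^{S_{K(n)}}(S^B_{K(n)}) \to \TAQ^{S_{K(n)}}(S^E_{K(n)}) \to \TAQ^{S_{K(n)}}(S^F_{K(n)}). $$

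Finally, applying the contravariant exact functor $F_{S_{K(n)}}(-,S_{K(n)})$, which sends any cofiber triangle to a cofiber/fiber triangle with the arrows reversed, produces the asserted fiber sequence
$$ \TAQ_{S_{K(n)}}(S^F_{K(n)}) \to \TAQ_{S_{K(n)}}(S^E_{K(n)}) \to \TAQ_{S_{K(n)}}(S^B_{K(n)}). $$
As the author already signals, the lemma really is immediate from excisivity, so no serious obstacle is expected; the only point requiring modest care is keeping the augmented/non-unital correspondence straight so that the $\TAQ$ of the pushout produced by the Eilenberg--Moore condition is correctly matched with the $\TAQ$ of the reduced cochains appearing in the statement, and that one remembers the dualization step reverses the direction of the triangle.
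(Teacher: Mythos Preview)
Your proposal is correct and is exactly the argument the paper has in mind: the paper simply asserts that the lemma ``follows immediately from the excisivity of $\TAQ$,'' and your write-up spells out that deduction in full (pushout of augmented algebras from the Eilenberg--Moore hypothesis, excisivity to get a cofiber sequence on $\TAQ^{S_{K(n)}}$, then dualize). There is no difference in approach.
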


Since $\Phi_{K(n)}$ preserves fiber sequences, we deduce the following.

\begin{cor}\label{cor:EMgood}
Suppose that 
$$ F \rightarrow E \rightarrow B $$
is a fiber sequence of finite spaces which is $K(n)$-cohomologically Eilenberg-Moore.  Then if any two of the spaces in the sequence are $\Phi_{K(n)}$-good, so is the third.
\end{cor}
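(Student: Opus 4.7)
The plan is to reduce the question to the two-out-of-three property for fiber sequences of spectra, using Corollary~\ref{cor:main} to translate $\Phi_{K(n)}$-goodness into a statement about the comparison map.

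First, I would invoke Corollary~\ref{cor:main}: since all three spaces $F$, $E$, $B$ are finite, being $\Phi_{K(n)}$-good is equivalent to having the comparison map $c_X^{K(n)}$ be an equivalence. So the problem becomes: if any two of the three comparison maps $c_F^{K(n)}$, $c_E^{K(n)}$, $c_B^{K(n)}$ are equivalences, is the third?

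Next, I would assemble the naturality square of fiber sequences. The comparison map is a natural transformation, so applying it levelwise to $F \to E \to B$ yields a commutative diagram
$$
\xymatrix{
\Phi_{K(n)}(F) \ar[r] \ar[d]_{c_F^{K(n)}} & \Phi_{K(n)}(E) \ar[r] \ar[d]_{c_E^{K(n)}} & \Phi_{K(n)}(B) \ar[d]^{c_B^{K(n)}} \\
\TAQ_{S_{K(n)}}(S_{K(n)}^F) \ar[r] & \TAQ_{S_{K(n)}}(S_{K(n)}^E) \ar[r] & \TAQ_{S_{K(n)}}(S_{K(n)}^B)
}
$$
in the category of spectra. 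The top row is a fiber sequence by property~(2) of the Bousfield-Kuhn functor (stated in Section~\ref{sec:vn}), which says $\Phi_{K(n)}$ preserves fiber sequences. The bottom row is a fiber sequence precisely because the given fiber sequence is $K(n)$-cohomologically Eilenberg-Moore, by the preceding lemma.

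Finally, I would invoke the two-out-of-three property: in a map between two fiber sequences of spectra, if any two of the three vertical comparison maps are equivalences, then so is the third (this follows from the associated map of long exact sequences of homotopy groups and the five lemma, or equivalently from the fact that the homotopy fiber of a map of fiber sequences is itself a fiber sequence). Translating back via Corollary~\ref{cor:main}, the third space is also $\Phi_{K(n)}$-good. There is essentially no obstacle here; the entire content of the corollary has already been packaged into the two inputs (preservation of fiber sequences by $\Phi_{K(n)}$ and by $\TAQ_{S_{K(n)}}(S_{K(n)}^{(-)})$ under the Eilenberg-Moore hypothesis), and the comparison map does the rest.
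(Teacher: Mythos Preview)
Your proof is correct and is exactly the argument the paper has in mind: the paper merely states ``Since $\Phi_{K(n)}$ preserves fiber sequences, we deduce the following'' before the corollary, leaving the reader to combine this with the preceding lemma and the two-out-of-three property for maps between fiber sequences of spectra, which is precisely what you have spelled out.
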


Using this we can give examples of $\Phi_{K(n)}$-good spaces which are not spheres (or finite products of spheres).

\begin{prop}
The special unitary groups $SU(k)$ and symplectic groups $Sp(k)$ are $\Phi_{K(n)}$-good.
\end{prop}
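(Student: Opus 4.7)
My plan is to apply Corollary~\ref{cor:EMgood} inductively, using the standard principal fiber sequences
\begin{gather*}
SU(k-1) \to SU(k) \to S^{2k-1}, \\
Sp(k-1) \to Sp(k) \to S^{4k-1},
\end{gather*}
arising from the transitive action of $SU(k)$ on the unit sphere in $\CC^k$ and of $Sp(k)$ on the unit sphere in $\mathbb{H}^k$. The induction starts at $SU(1) = \ast$ (trivially $\Phi_{K(n)}$-good) and at $Sp(1) = S^3$ ($\Phi_{K(n)}$-good by Theorem~\ref{thm:AroneMahowald}). At each step, the base of the relevant fibration is an odd sphere and hence $\Phi_{K(n)}$-good by the same theorem, so Corollary~\ref{cor:EMgood} will deliver goodness of the total space as soon as we verify the $K(n)$-cohomological Eilenberg--Moore condition.

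The bulk of the work lies in this last verification. I would invoke the classical computations that $K(n)^*(SU(k))$ is an exterior algebra on odd-dimensional generators $x_3, x_5, \ldots, x_{2k-1}$ and that $K(n)^*(Sp(k))$ is an exterior algebra on generators in degrees $3, 7, \ldots, 4k-1$, the top class in each case being a transgression of the fundamental class of the base. Consequently $K(n)^*(SU(k))$ is free of finite rank over $K(n)^*(S^{2k-1})$ with reduction $K(n)^*(SU(k-1))$, and similarly in the symplectic family. This freeness is exactly the hypothesis that makes the cohomological Eilenberg--Moore spectral sequence of \cite[Sec.~IV.6]{EKMM} collapse onto the edge and converge strongly, which in turn is equivalent to the natural map
$$ S^{SU(k)_+}_{K(n)} \wedge_{S^{S^{2k-1}_+}_{K(n)}} S_{K(n)} \xrightarrow{\simeq} S^{SU(k-1)_+}_{K(n)} $$
being a $K(n)$-equivalence, and analogously for $Sp(k)$. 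Granted this, Corollary~\ref{cor:EMgood} applied to each stage of the induction completes the argument for both families.

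The step I expect to be the main obstacle is the rigorous justification of the $K(n)$-Eilenberg--Moore condition in the specific form required by the definition preceding Corollary~\ref{cor:EMgood}, i.e.\ as an equivalence of function spectra rather than merely as a convergence statement at the level of $K(n)$-cohomology groups. The two statements are close but not tautologically equivalent; what saves the day is that $K(n)^*(SU(k))$ is \emph{finitely generated and free} over $K(n)^*(S^{2k-1})$, which kills all higher $\Tor$-terms and allows the $K(n)$-local Künneth-type argument to be promoted, via standard thick-subcategory methods, from a statement about $K(n)$-homology to an actual equivalence of $K(n)$-local spectra. Once this freeness-to-equivalence passage is in hand, the rest of the induction is formal.
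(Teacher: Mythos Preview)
Your proposal is correct and follows essentially the same approach as the paper: induction up the fiber sequences $SU(k-1)\to SU(k)\to S^{2k-1}$ (and the symplectic analogue) via Corollary~\ref{cor:EMgood}, using the exterior-algebra structure of $K(n)^*SU(k)$ to verify the $K(n)$-cohomological Eilenberg--Moore hypothesis. The paper derives this exterior-algebra structure from Petrie's computation of $MU_*SU(k)$ together with a rank-counting collapse of the Atiyah--Hirzebruch spectral sequence rather than simply citing it, but otherwise the two arguments coincide.
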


\begin{proof}
For simplicity we treat the special unitary groups; the symplectic case is essentially identical.
Petrie \cite{Petrie} showed that additively there is an isomorphism
$$ MU_*SU(k) \cong \Lambda_{MU_*}[y_3, y_5, \ldots, y_{2k-1}]. $$
It follows from the collapsing universal coefficient spectral sequence that there is an additive isomorphism
\begin{equation}\label{eq:KnSUk}
K(n)^*SU(k) \cong \Lambda_{K(n)_*}[x_3, x_5, \ldots, x_{2k-1}].
\end{equation}
The Atiyah-Hirzebruch spectral sequence for $K(n)^*SU(k)$ must therefore collapse (any differentials would otherwise make the rank of $K(n)^*SU(k)$ too small).  There are no possible extensions, as the exterior algebra is free as a graded-commutative algebra.  Therefore (\ref{eq:KnSUk}) is an isomorphism of $K(n)_*$-algebras.  This can than be used to show that the fiber sequences
$$ SU(k-1) \rightarrow SU(k) \rightarrow S^{2k-1} $$
are $K(n)$-cohomologically Eilenberg-Moore.  The result follows by induction (using Corollary~\ref{cor:EMgood}).
\end{proof}

Not all spaces are $\Phi_{K(n)}$-good.  Brantner and Heuts have recently shown that wedges of spheres of dimension greater than 1, and mod $p$ Moore spaces, are examples of non-$\Phi_{K(n)}$-good spaces \cite{BrantnerHeuts}.

\subsection*{Some computations}

The target of the comparison map should be regarded as computable, and the source should be regarded as mysterious.  Because of this, our theorem has important computational consequences.  We take a moment to mention some things that have already been done.

In \cite{BKTAQ}, we show that the Morava $E$-theory of the layers of the Goodwillie tower for $\Phi_{K(n)}$ evaluated on $S^1$ are given by the cohomology of the second author's \emph{modular isogeny complex} \cite{RezkMIC}.
Theorem~\ref{thm:main2} was applied by the authors in \cite{BKTAQ} to
compute the Morava $E$-theory of the attaching maps between the
consecutive non-trivial layers of this Goodwillie tower.  Iterating
the double suspension, these computations then restrict to give an
approach to computing the Morava $E$-theory of the Goodwillie tower of
$\Phi_{K(n)}$ evaluated on all odd dimensional spheres.  

We envision this as a step in the program of Arone-Mahowald
\cite{AroneMahowald}, \cite{Kuhncalc} to compute the unstable
$v_{K(n)}$-periodic homotopy groups of spheres (and other
$\Phi_{K(n)}$-good spaces) using stable $v_{K(n)}$-periodic homotopy
groups and Goodwillie calculus.  This would generalize a number of
known calculations in the case of $n=1$. These
computations include those of Mahowald \cite{MahowaldImJ} and 
Thompson \cite{Thompson} for spheres, and would generalize Bousfield's
technology \cite{Bousfield99}, \cite{Bousfield05}, \cite{Bousfield07},
for computations for spherically resolved spaces.  Bousfield's theory was applied successfully by Don Davis and his collaborators to compute
$v_1$-periodic homotopy groups of various compact Lie groups (see \cite{Davis}, where the previous work on this subject, by Bendersky, Davis, Mahowald, and Mimura is summarized\footnote{Technically, the previous computations used the unstable Adams-Novikov spectral sequence, but were simplified using Bousfield's results.}).  

To this end, Zhu has used his explicit computation of the Morava $E$-theory Dyer-Lashof algebra at $n = 2$ \cite{Zhu1} to compute the Morava $E$-theory of $\Phi_{K(2)}(S^q)$ for $q$ odd \cite{Zhu2}.

Using our technology, but employing $BP$-theory instead of Morava $E$-theory, Wang has computed the groups $v^{-1}_{K(2)}\pi_*(S^3)^{\wedge}$ for $p \ge 5$ \cite{Wang}.  Wang has also computed the monochromatic Hopf invariants of the $\beta$-family at these primes.  These are the analogs of the classical Hopf invariants, but computed in the category $M_2^f\Top_*$.

\begin{thm}[Wang \cite{WangHopf}]
The monochromatic Hopf invariant of $\beta_{i/j,k}$ is $\beta_{i-j/k}$.
\end{thm}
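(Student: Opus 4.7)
The plan is to realize the monochromatic Hopf invariant as an attaching map in the Goodwillie tower of the identity on odd-dimensional spheres, transport the computation to the $\TAQ$-setting via the main theorem, and then use explicit $BP$-theoretic computations to track the $\beta$-family through the resulting EHP-type sequence.

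First, I would set up the monochromatic James--Hopf formalism. By Theorem~\ref{thm:AroneMahowald}, for odd $q$ the Goodwillie tower of $\Id$ converges on $S^q$ after $\Phi_{K(2)}$ in $p^n = p^2$ stages, with nontrivial layers concentrated at $k = 1, p, p^2$. The attaching map
\[
(\Sigma^\infty S^q)_{K(2)} \simeq D_1\Phi_{K(2)}(S^q) \to \Sigma D_p\Phi_{K(2)}(S^q) \simeq \Sigma(s^{-1}\Lie_p \wedge_{h\Sigma_p}(S^q)^{\wedge p})_{K(2)}
\]
defines the monochromatic James--Hopf invariant $H$; the monochromatic Hopf invariant of a class $\alpha \in v_{K(2)}^{-1}\pi_*(S^q)^{\wedge}$ is extracted from $H_*(\alpha)$ via the splittings identifying each layer with an odd sphere (after an appropriate base change by a type 2 complex).

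Second, I would apply the main theorem (Corollary~\ref{cor:main}), which identifies this tower with Kuhn's filtration on $\TAQ_{S_{K(2)}}(S_{K(2)}^{S^q})$. Under this identification, the attaching maps between layers are precisely the $s^{-1}\Lie_S$-operad structure maps on $\TAQ_{S_{K(2)}}$ furnished by Ching's theorem. This reinterprets $H$ as a spectral Lie bracket-type operation, which is in principle accessible through the Basterra-type spectral sequence~(\ref{eq:BSS}) and Kuhn's cell-structure tower on $\TAQ$.

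Third, I would apply the $BP$-theoretic methods Wang developed in \cite{Wang}. The $\beta$-family elements $\beta_{i/j,k}$ are detected on the Adams--Novikov $E_2$-page by explicit cocycles $b_{i/j,k} \in \Ext^2_{BP_*BP}(BP_*, BP_*)$ arising from the divided $\beta$-construction via the chromatic spectral sequence. One shows that each such class survives to $\Phi_{K(2)}(S^q)$ and lifts compatibly through the bottom of the Goodwillie tower, then computes its image under the $BP_*$-linearization of $H$, which by the previous step is induced by the spectral Lie bracket. The combinatorics of the divided $\beta$-construction predicts precisely that this image is the cocycle $b_{i-j/k}$ detecting $\beta_{i-j/k}$.

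The main obstacle is this last identification: making precise the arithmetic shift $(i,j,k) \mapsto (i-j,1,k)$ requires matching the algebraic machinery of the divided $\beta$-family (built from $v_1$-Bockstein towers on $BP_*/p$-modules) against the spectral Lie structure on $\TAQ_{S_{K(2)}}(S^{S^q}_{K(2)})$. One expects this to reduce, at the level of $BP_*$-detection, to a multiplicative identity relating the $v_2$-periodic Toda bracket structure underlying $\beta_{i/j,k}$ with the bracket induced by the $p$-fold attaching map, with the integer $j$ tracking precisely the $v_1$-divisibility consumed in passing from the $1$-excisive layer to the $p$-excisive layer.
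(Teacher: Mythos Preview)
The paper does not contain a proof of this theorem. It is stated with attribution to Wang \cite{WangHopf} and presented purely as a computational application of the main theorem carried out elsewhere; the surrounding text says only that ``Wang has also computed the monochromatic Hopf invariants of the $\beta$-family at these primes'' and then states the result. There is no argument in the paper to compare your proposal against.

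That said, your outline is broadly consistent with how the paper frames Wang's work: the paper describes Wang as ``using our technology, but employing $BP$-theory instead of Morava $E$-theory,'' which matches your second and third steps. Your sketch is a reasonable high-level plan, but it is not a proof. The substantive content---the actual $BP$-theoretic computation showing that the attaching map sends the cocycle detecting $\beta_{i/j,k}$ to the one detecting $\beta_{i-j/k}$---is precisely what you defer in your final paragraph as ``the main obstacle.'' Everything before that is setup available directly from the paper; the theorem itself lives entirely in the step you have not carried out. If you want to actually prove this, you will need to consult \cite{WangHopf} for the explicit chromatic spectral sequence manipulations, since nothing in the present paper supplies them.
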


Finally, Brantner has recently computed the algebra of power operations which naturally act on the completed $E$-theory of any spectral Lie algebra (such as those arising as spectral Lie algebra models of unstable $v_n$-periodic homotopy types) \cite{Brantner}.

\subsection*{Some questions}

We end this section with some questions.

\begin{ques} 
Does the bracket from the $s^{-1}\Lie$-structure on $\TAQ$-coincide with the Whitehead product in unstable $v_n$-periodic homotopy?
\end{ques}

\begin{ques}
In \cite{Bousfield07}, Bousfield introduces the notion of a $\widehat{K}\Phi$-good space.  What is the relationship between this notion and the notion of being $\Phi_{K(1)}$-good?
\end{ques}

\begin{ques}
Is there a relationship to $X$ being $\Phi_{K(n)}$-good and the convergence of $X$'s unstable $v_n$-periodic $E^\wedge_n$-based Adams spectral sequence to $v_{K(n)}^{-1}\pi_*(X)^{\wedge}$?
\end{ques}

\section{The Arone-Ching approach}\label{sec:AroneChing}

The central component of Goodwillie's theory of homotopy calculus, from which the theory derives much of its computational power, is the idea that the layers of the Goodwillie tower of a functor $F$ are classified by its symmetric sequence of derivatives $\partial_* F$.  Arone and Ching have pursued a research program which seeks to endow $\partial_* F$ with enough extra structure to recover the entire Goodwillie tower of $F$ \cite{AroneChingchain}, \cite{AroneChing}, \cite{AroneChingcross}.  In this section we will focus on the setup of \cite{AroneChingchain}, and will describe their approach to give a conceptual alternative proof of Theorem~\ref{thm:main2}.  \emph{In this section we will only consider homotopy functors}
$$ F: \mc{C} \rightarrow \mc{D} $$
\emph{where $\mc{C}$ and $\mc{D}$ are either the categories of pointed spaces or spectra.}\footnote{Later in this section we will also allow $\mc{D}$ to be $\Sp_{T(n)}$.}

\subsection*{Modules over operads}

Let $\mc{O}$ be a reduced operad in $\Mod_R$, and let $\mc{A} = \{ \mc{A}_i \}$ be a symmetric sequence of $R$-module spectra.  A left (respectively right) module structure on $\mc{A}$ is the structure of an associative action
$$ \mc{O} \circ \mc{A} \rightarrow \mc{A} \qquad (\mr{resp.} \: \mc{A} \circ \mc{O} \rightarrow \mc{A}). $$
One similarly has the notion of a left/right comodule structure.  Explicitly, a left $\mc{O}$-module structure on $\mc{A}$ is encoded in structure maps
$$ \mc{O}_k \wedge_R \mc{A}_{n_1} \wedge_R \cdots \wedge_R \mc{A}_{n_k} \rightarrow \mc{A}_{n_1+\cdots +n_k}, $$
and a right $\mc{O}$-module structure is encoded in structure maps
$$ \mc{A}_k \wedge_R \mc{O}_{n_1} \wedge_R \cdots \wedge_R \mc{O}_{n_k} \rightarrow \mc{A}_{n_1+\cdots+n_k}. $$
The structure maps for left/right comodules are obtained simply by reversing the direction of the above arrows.

Suppose that $A$ is an $\mc{O}$-algebra.  Regarding $A$ as the symmetric sequence
$$ (A, \ast, \ast, \cdots ) $$
with $A$ in the $0$th spot, the $\mc{O}$-algebra structure on $A$ can also be regarded as a left $\mc{O}$-module structure on $A$.
Less obviously, the $\mc{O}$-algebra structure can also be encoded in a right \emph{comodule} structure on the symmetric sequence\footnote{It is more natural to define the $0$th space of the symmetric sequence $A^{\wedge_R *}$ to be $R$, but it makes no difference as we are assuming $\mc{O}$ is reduced.  For the purposes of the rest of the section this convention will be more useful.}
$$ A^{\wedge_R *} := (\ast, A, A^2, A^3, \cdots ). $$
For simplicity, assume that each of the $R$-module spectra $\mc{O}_i$ are strongly dualizible.  Then $\mc{O}^\vee$ is a cooperad, and the $\mc{O}$-algebra structure on $\mc{A}$ is encoded in a right $\mc{O}^\vee$-comodule structure on $A^{\wedge_R *}$  
$$ A^{n_1+\cdots+n_k} \rightarrow A^k \wedge_R \mc{O}^\vee_{n_1} \wedge_R \cdots \wedge_R \mc{O}^\vee_{n_k}. $$
These comodule structure maps are adjoint to the maps
$$ \mc{O}_{n_1} \wedge_R \cdots \wedge_R \mc{O}_{n_k} \wedge_R A^{n_1+\cdots +n_k} \rightarrow A^k $$
obtained by smashing together $k$ algebra structure maps.

\subsection*{Koszul duality, again}

\emph{In this subsection, all symmetric sequences $\mc{A}$ are assumed to satisfy $\mc{A}_0 = \ast.$}  With this hypothesis, Ching's construction of the cooperad structure on the operadic bar construction
$$ B\mc{O} = B(1_R, \mc{O}, 1_R) $$
extends to give $B\mc{O}$-comodule structures \cite{Ching}.  Specifically, suppose that $\mc{M}$ is a right $\mc{O}$-module.  Then 
$$ B\mc{M} := B(\mc{M}, \mc{O}, 1_R) $$
gets the structure of a right $B\mc{O}$-comodule.  Similarly, for a left $\mc{O}$-module $\mc{N}$,
$$ B\mc{N}:= B(1_R, \mc{O}, \mc{N}) $$
gets the structure of a left $B\mc{O}$-comodule.  There are dual statements which endow cobar constructions of comodules with module structures.

In this manner the operadic bar and cobar constructions give functors
\begin{align*}
 B: \mr{lt.Mod}_\mc{O} & \leftrightarrows \mr{lt.Comod}_{B\mc{O}}: C, \\
 B: \mr{rt.Mod}_\mc{O} & \leftrightarrows \mr{rt.Comod}_{B\mc{O}}: C.
\end{align*}

Some of the key ideas in the following Koszul duality theorem can be found in \cite{AroneChingchain}, but a proof of the full statement should appear in \cite{ChingKoszul}.

\begin{thm}[Ching]\label{thm:Koszul}
The bar/cobar constructions give an equivalence of homotopy categories of right (co)modules
$$ B: \Ho(\mr{rt.Mod}_\mc{O}) \leftrightarrows \Ho(\mr{rt.Comod}_{B\mc{O}}): C. $$
In the case of left modules, the bar construction gives a fully faithful embedding
$$ B: \Ho(\mr{lt.Mod}_\mc{O}) \hookrightarrow \Ho(\mr{lt.Comod}_{B\mc{O}}). $$
\end{thm}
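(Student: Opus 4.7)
The plan is to prove Koszul duality for modules by combining a careful setup of the bar-cobar adjunction with a resolution argument that reduces everything to computations on free and cofree objects.

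First, I would establish the adjunctions
$$ B(-,\mc{O},1_R): \mr{rt.Mod}_\mc{O} \leftrightarrows \mr{rt.Comod}_{B\mc{O}}: C(-,B\mc{O},1_R) $$
(and the analogous left version), verifying that $B$ lands in right $B\mc{O}$-comodules via the cooperad diagonal applied to the middle $1_R$, and that $C$ lands in right $\mc{O}$-modules via the basic operadic Koszul duality $C(1_R, B\mc{O}, 1_R) \simeq \mc{O}$ of Ching. The adjunction itself follows from the standard interplay between the simplicial bar complex and the cosimplicial cobar complex, using the composition product to encode both the module structure on one side and the comodule structure on the other.

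For the right module case, the key reduction is to resolutions: any right $\mc{O}$-module admits the canonical simplicial resolution $\mc{M} \simeq |B_\bullet(\mc{M}, \mc{O}, \mc{O})|$ with terms free right $\mc{O}$-modules of the form $\mc{X} \circ \mc{O}$, and dually any right $B\mc{O}$-comodule admits a cofree cosimplicial coresolution. Since $B$ commutes with realizations and $C$ with totalizations, it suffices to check the unit and counit on free/cofree objects. On a free right module $\mc{X}\circ\mc{O}$, the extra-degeneracy trick gives $B(\mc{X}\circ\mc{O}, \mc{O}, 1_R) \simeq \mc{X}$, and then $C(\mc{X}, B\mc{O}, 1_R) \simeq \mc{X}\circ\mc{O}$ by the operadic Koszul duality above, so $CB(\mc{X}\circ\mc{O}) \simeq \mc{X}\circ\mc{O}$. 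A dual argument on cofree comodules $\mc{Y}\circ B\mc{O}$ yields $BC \simeq \id$. For the left module case, the analogous counit argument using the resolution $\mc{N} \simeq |B_\bullet(\mc{O}, \mc{O}, \mc{N})|$ shows $CB \simeq \id$, giving fully faithfulness.

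The main obstacle is the unit map $\mc{N} \to BC\mc{N}$ in the left case, which fails in general. Intuitively, a left comodule structure can iterate the coaction arbitrarily deeply, and not every such structure arises from a left $\mc{O}$-module via bar; cutting out the essential image requires an additional filtration-completeness or pro-nilpotence hypothesis beyond the scope of the stated theorem. A secondary technical point is convergence of the simplicial and cosimplicial objects throughout: the hypothesis $\mc{A}_0 = \ast$ guarantees that in each fixed arity only finitely many simplicial degrees contribute to compositions of symmetric sequences, so the bar and cobar constructions are arity-wise finite and convergence is automatic in the right case. In the left case this same finiteness applies to the bar direction but is exactly what fails for the cobar of an arbitrary comodule, which is the precise source of the asymmetry in the theorem.
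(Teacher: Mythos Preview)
The paper does not contain a proof of this theorem. It is stated as a result due to Ching, with the comment that ``some of the key ideas in the following Koszul duality theorem can be found in \cite{AroneChingchain}, but a proof of the full statement should appear in \cite{ChingKoszul}.'' There is therefore no proof in the paper against which to compare your proposal.

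That said, your sketch follows the standard bar--cobar strategy and is broadly reasonable. The crucial technical observation you isolate---that the hypothesis $\mc{A}_0 = \ast$ forces the bar and cobar constructions to be arity-wise finite, so that in each arity geometric realizations and totalizations coincide in the stable setting and convergence issues evaporate---is indeed the mechanism that allows the right-module case to go through without connectivity hypotheses. Your diagnosis of the left-module asymmetry is plausible but speculative; note that the paper's subsequent remark records that Ching himself expects the left case should also yield an equivalence but does not presently know how to prove it, so the obstruction may be more technical than structural.

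One point worth flagging: you assert the bar--cobar adjunction rather casually. Promoting $B$ and $C$ to an honest adjoint pair at the level of homotopy categories (or as a Quillen adjunction, or an adjunction of $\infty$-categories) requires real work---in particular, making the $B\mc{O}$-comodule structure on $B\mc{M}$ and the $\mc{O}$-module structure on $C\mc{N}$ suitably functorial and strictly compatible with the unit and counit. This is part of what the cited references are meant to supply, and your proposal should acknowledge it as a nontrivial ingredient rather than a formality.
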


\begin{rmk}
Ching expects that one should also get an equivalence of homotopy categories for left modules, but presently do not know how to prove this.
\end{rmk}

\begin{rmk}
In both the case of left and right modules, the bar construction induces equivalences of derived mapping spaces
$$ \ul{\mr{lt./rt.Mod}}_{\mc{O}}(\mc{M}, \mc{N}) \xrightarrow{\simeq} \ul{\mr{lt./rt.Comod}}_{B\mc{O}}(B\mc{M}, B\mc{N}). $$
\end{rmk}

The reader may be startled that the Koszul duality in Theorem~\ref{thm:Koszul} applies to the full categories of modules, and not some suitable subcategory, and makes no mention of ``divided power structures'' (as was the case of the instances of Koszul duality of Section~\ref{sec:Koszul}).  It seems that one should rather think of Theorem~\ref{thm:Koszul} as an extension of Koszul duality for (co)operads, rather than Koszul duality for (co)algebras over (co)operads.  Indeed, regarding an $\mc{O}$-algebra structure on $A$ as a left $\mc{O}$-module structure on $A$, Theorem~\ref{thm:Koszul} does not apply, as the symmetric sequence $(A, \ast, \ast, \cdots)$ does not have trivial $0$th spectrum.  Theorem~\ref{thm:Koszul} (with dualizability hypotheses on $\mc{O}$) does encode an $\mc{O}$-algebra structure on $A$ in a $(B\mc{O})^\vee$-comodule structure on $CA^{\wedge_R *}$, but the latter does not translate into anything like a $B\mc{O}$-coalgebra structure.

\begin{rmk}
Ching does have a \emph{different} Koszul duality Quillen adjunction
\begin{equation}\label{eq:KoszulII}
Q: \mr{lt./rt.Comod}_{B\mc{O}} \leftrightarrows \mr{lt./rt.Mod}_{\mc{O}} : \mr{Prim} 
\end{equation}
which \emph{does not} in general give an equivalence of homotopy categories, but which \emph{does} restrict (in the case of right modules) to give the usual Koszul duality between $(B\mc{O})^\vee$-algebras and $\mc{O}^\vee$-coalgebras.  The monad and comonad of this adjunction encode divided power module and comodule structures, which extend the previously established notions of divided power structures for algebras and coalgebras.
\end{rmk}

\subsection*{The fake Taylor tower}

In \cite{AroneChingchain}, Arone and Ching establish that the derivatives of a functor
$$ F : \mc{C} \rightarrow \mc{D} $$
have the structure of a $\partial_*\Id_\mc{D}$-$\partial_*\Id_\mc{C}$-bimodule.
Note that in the case where either $\mc{C}$ or $\mc{D}$ is the category $\Sp$ of spectra, $\partial_* \Id_\Sp = 1$, and a left or right $\partial_* \Id_\Sp$-module structure amounts to no additional structure.

A key tool, introduced in \cite{AroneChingchain} is the notion of the \emph{fake Taylor tower} of the functor $F$.  The fake Taylor tower is the closest approximation to the Goodwillie tower which can be formed using only the bimodule structure of $\partial_* F$, and is defined as follows. 

For $X \in \mc{C}$, let $R_X$ denote the corepresentable functor
$$ R_X: \mc{C} \rightarrow \mc{D} $$
given by
$$ R_X(Z) = [\Sigma^\infty]\ul{\mc{C}}(X,Z) $$
(where the $\Sigma^\infty$ in the above formula is only used if $\mc{D} = \Sp$).
Then the fake Taylor tower $\{ P_n^{\mit{fake}}F \}$ is the tower of functors under $F$ given by (in the case where $X$ is finite\footnote{For $X$ infinite, one must regard $R_X$ as a pro-functor.})
$$ P^{\mit{fake}}_nF(X) := {}_{\partial_*\Id_{\mc{D}}}\ul{\mr{Bimod}}_{\partial_* \Id_{\mc{C}}}(\partial_* R_X, \tau_n\partial_*F). $$
Here, for a symmetric sequence $\mc{A}$, we are letting $\tau_n \mc{A}$ denote its $n$th truncation
\begin{equation}\label{eq:truncation} 
\tau_n\mc{A}_k := \begin{cases}
\mc{A}_k, & k \le n, \\
\ast, & k > n. 
\end{cases}
\end{equation}
With the hypothesis that all symmetric sequences have trivial $0$th term, it is easy to see that operad and module structures on $\mc{A}$ induce corresponding structures on $\tau_n\mc{A}$.

The layers of the fake Taylor tower given by the fibers
$$ D^\mit{fake}_nF \rightarrow P_n^\mit{fake}F \rightarrow P_{n-1}^\mit{fake}F $$
take the form
$$ D_n^\mit{fake}F(X) \simeq \Omega^\infty_{\mc{D}} \left( \partial_nF \wedge \Sigma^\infty_{\mc{C}} X^n\right)^{h\Sigma_n}. $$

The following theorem is essentially proven in \cite{AroneChingchain}.

\begin{thm}[Arone-Ching]\label{thm:ACfake}
There is a natural transformation of towers 
$$ \{ P_nF \} \rightarrow \{ P^\mit{fake}_n F \} $$
such that the induced map on fibers is given by the norm map
$$ N: \Omega^\infty_{\mc{D}} \left( \partial_nF \wedge \Sigma^\infty_{\mc{C}} X^n\right)_{h\Sigma_n} \rightarrow \Omega^\infty_{\mc{D}} \left( \partial_nF \wedge \Sigma^\infty_{\mc{C}} X^n\right)^{h\Sigma_n}.$$
\end{thm}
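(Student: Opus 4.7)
The plan is to follow Arone-Ching \cite{AroneChingchain} by first constructing the fake Taylor tower directly from the bimodule structure on $\partial_* F$, then producing a comparison map from the genuine Goodwillie tower via Yoneda-type reasoning, and finally identifying the induced map on fibers as the norm map by a direct computation. A crucial preparatory input is the identification $\partial_k R_X \simeq (\Sigma^\infty_{\mc{C}} X)^{\wedge k}$ as a $\Sigma_k$-spectrum, with the bimodule structure coming from the operad structure on $\partial_*\Id_{\mc{C}}$ acting in the source variable and the left action being trivial beyond the evident unit. This makes $\partial_* R_X$ the ``free'' bimodule generated in arity $1$ by $\Sigma^\infty_{\mc{C}} X$.

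Next, I would construct the natural transformation $F \to P_n^{\mit{fake}} F$. The bimodule $\partial_* F$ admits a canonical pairing with $\partial_* R_X$ encoding the successive Taylor coefficients of $F$ at $X$, and assembling these into a single map gives a natural arrow
$$ F(X) \to {}_{\partial_*\Id_{\mc{D}}}\ul{\mr{Bimod}}_{\partial_*\Id_{\mc{C}}}(\partial_* R_X, \partial_* F). $$
Postcomposition with the truncation $\partial_* F \to \tau_n \partial_* F$ yields the candidate $F \to P_n^{\mit{fake}} F$. Since $\tau_n\partial_*F$ is concentrated in arities $\le n$, a derivative calculation on the bimodule mapping spectrum shows $P_n^{\mit{fake}}F$ is $n$-excisive, so this natural transformation factors canonically through $P_nF$. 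Compatibility with the truncation maps $\tau_n \partial_* F \to \tau_{n-1}\partial_* F$ then assembles these into the desired map of towers $\{P_nF\} \to \{P_n^{\mit{fake}}F\}$.

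To identify the layers, I would analyze the fiber of $P_n^{\mit{fake}}F \to P_{n-1}^{\mit{fake}}F$ directly from the definition. The homotopy fiber of the truncation $\tau_n \partial_* F \to \tau_{n-1}\partial_* F$, as a bimodule, is (equivalent to) the symmetric sequence with $\partial_n F$ placed in arity $n$ and the point elsewhere, with all bimodule action trivial. The bimodule mapping spectrum from the ``free'' bimodule $\partial_* R_X$ into this symmetric sequence reduces, by a bar-type adjunction, to the $\Sigma_n$-homotopy fixed points of $\ul{\mr{Map}}_{R}((\Sigma^\infty_{\mc{C}} X)^{\wedge n},\partial_n F)$, i.e.\ to
$$ \Omega^\infty_{\mc{D}}\bigl( \partial_n F \wedge \Sigma^\infty_{\mc{C}} X^{\wedge n}\bigr)^{h\Sigma_n}. $$
The genuine Goodwillie layer, by the multilinearized cross-effect formula, is the analogous \emph{homotopy orbit} spectrum, and tracing through the definitions identifies the induced map from orbits to fixed points as the canonical norm transformation.

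The hardest part will be the last identification: verifying in the homotopy-coherent setting that the map of towers constructed via the bimodule Yoneda pairing agrees, on the fiber of level $n$, with the norm map to which the formal descriptions point. This requires compatible bar-type models for $D_n F$ (as a quotient of the $n$th cross-effect) and for the bimodule mapping spectrum defining $D_n^{\mit{fake}}F$, together with the verification that the ``diagonal'' pairing of $\partial_* R_X$ with $\partial_* F$ realizes the sum-over-cosets formula built into $N$. This book-keeping is the substantive content of the relevant sections of \cite{AroneChingchain}; once granted, the rest of the argument is formal.
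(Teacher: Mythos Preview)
The paper does not supply its own proof of this theorem: it is stated as a result of Arone--Ching and simply attributed to \cite{AroneChingchain}. So there is nothing in the paper to compare your proposal against beyond that citation.

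Your sketch is a reasonable outline of the Arone--Ching argument. The identification $\partial_k R_X \simeq (\Sigma^\infty_{\mc{C}} X)^{\wedge k}$ is exactly what the paper cites later as \cite[Lemma~6.14]{AroneChing}, and the fiber computation you describe (bimodule maps into a symmetric sequence concentrated in arity $n$ with trivial action reduce to $\Sigma_n$-fixed points) is the correct mechanism producing the $(-)^{h\Sigma_n}$ in the fake layer. Your honest acknowledgment that the identification of the induced map on fibers with the norm is the genuinely delicate step is also accurate: this is where the content lies, and it is precisely what is deferred to \cite{AroneChingchain} in the paper. One small caution: your phrase ``$\partial_* R_X$ is the free bimodule generated in arity $1$'' is slightly imprecise, since the right $\partial_*\Id_{\mc{C}}$-action is nontrivial (it encodes the diagonal structure on $X$); what is true is that it is free enough that mapping out of it computes fixed points rather than something more complicated.
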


Thus, in general, the map from the Goodwillie tower to the fake Taylor tower is not an equivalence, and the difference is measured by the Tate spectra
$$ \Omega^\infty_{\mc{D}} \left( \partial_nF \wedge \Sigma^\infty_{\mc{C}} X^n\right)^{t\Sigma_n}. $$

Although we do not need it for what follows, we pause to mention that Arone and Ching have a refinement of this theory which recovers the Goodwillie tower from descent data on the derivatives.
Observe that the fake Taylor tower only depends on the bimodule $\partial_*F$.  The following is proven in \cite{AroneChing}.

\begin{thm}[Arone-Ching]
The limit of the fake Taylor tower is right adjoint to the derivatives functor:
$$ \partial_* : \mr{Funct}(\mc{C},\mc{D}) \leftrightarrows {}_{\partial_*\Id_{\mc{D}}}{\mr{Bimod}}_{\partial_* \Id_{\mc{C}}}: P_\infty^{\mit{fake}}. $$
\end{thm}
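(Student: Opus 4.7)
The plan is to establish the adjunction by constructing explicit unit and counit maps, reducing everything to a Yoneda-type computation, and then verifying the triangle identities. First I would extend the fake Taylor tower to arbitrary bimodules. Given an $(\partial_*\Id_\mc{D}, \partial_*\Id_\mc{C})$-bimodule $\mc{M}$, set
$$ P_n^{\mit{fake}}(\mc{M})(X) := {}_{\partial_*\Id_{\mc{D}}}\ul{\mr{Bimod}}_{\partial_* \Id_{\mc{C}}}(\partial_* R_X, \tau_n\mc{M}), \qquad P_\infty^{\mit{fake}}(\mc{M}) := \holim_n P_n^{\mit{fake}}(\mc{M}). $$
This agrees with the original definition when $\mc{M} = \partial_*F$, and is functorial in $X$ via the contravariant assignment $X \mapsto \partial_* R_X$. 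The definition already gives the desired formula for morphisms from a corepresentable: for any bimodule $\mc{M}$,
$$ \mr{Bimod}(\partial_* R_X, \mc{M}) \simeq P_\infty^{\mit{fake}}(\mc{M})(X), $$
so corepresentables realize evaluation, just as in a classical Yoneda lemma.

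The central technical step, which would be done next, is a computation of the derivatives of $P_n^{\mit{fake}}(\mc{M})$. Unwinding the mapping spectrum of bimodules and using that $\partial_* R_X$ behaves as a free right $\partial_*\Id_\mc{C}$-module on one generator in arity zero, one finds that the layers of the fake Taylor tower are given by
$$ D_k^{\mit{fake}} P_n^{\mit{fake}}(\mc{M})(X) \simeq \Omega^\infty_\mc{D}\bigl(\mc{M}_k \wedge \Sigma^\infty_\mc{C} X^{\wedge k}\bigr)^{h\Sigma_k} $$
for $k\le n$ and vanish for $k>n$, so that $\partial_k P_\infty^{\mit{fake}}(\mc{M}) \simeq \mc{M}_k$. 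One must moreover verify that the induced bimodule structure on $\partial_* P_\infty^{\mit{fake}}(\mc{M})$ agrees with that of $\mc{M}$; this is what makes the argument non-trivial. The bimodule structure on the derivatives is governed by the Arone-Ching chain rule, and one must trace through how the cooperadic bar coaction on $\partial_* R_X$ together with the bimodule action on $\mc{M}$ conspire to yield precisely the action maps of $\mc{M}$. Once established, this yields a natural counit $\varepsilon_\mc{M}: \partial_* P_\infty^{\mit{fake}}(\mc{M}) \to \mc{M}$ which is in fact an equivalence.

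With the counit in hand, the unit $\eta_F: F \to P_\infty^{\mit{fake}}(\partial_* F)$ is just the natural transformation from $F$ to its fake Taylor tower produced in the construction preceding Theorem~\ref{thm:ACfake}. The adjunction bijection
$$ \mr{Nat}(F, P_\infty^{\mit{fake}}(\mc{M})) \;\cong\; \mr{Bimod}(\partial_* F, \mc{M}) $$
is then defined on one side by $\alpha \mapsto \varepsilon_\mc{M} \circ \partial_*\alpha$ and on the other by $\beta \mapsto P_\infty^{\mit{fake}}(\beta) \circ \eta_F$. The triangle identities
$$ \varepsilon_{\partial_* F} \circ \partial_* \eta_F = \mr{id}_{\partial_* F}, \qquad P_\infty^{\mit{fake}}(\varepsilon_\mc{M}) \circ \eta_{P_\infty^{\mit{fake}}(\mc{M})} = \mr{id} $$
reduce to the computation from the previous paragraph and naturality of $\eta$.

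The hard part is unquestionably the identification of $\partial_* P_\infty^{\mit{fake}}(\mc{M})$ with $\mc{M}$ \emph{as bimodules} and not merely at the level of underlying symmetric sequences. The action of $\partial_*\Id_\mc{C}$ and $\partial_*\Id_\mc{D}$ on the derivatives of a mapping spectrum of bimodules is dictated by the chain rule together with the comodule structure that $\partial_* R_X$ carries over $B(\partial_*\Id_\mc{C})$ (via Theorem~\ref{thm:Koszul}), and reconciling this with the given bimodule structure on $\mc{M}$ requires a careful Koszul-duality computation. Once this identification is done in a structured way, the adjunction is formal; without it, even the counit fails to be well-defined.
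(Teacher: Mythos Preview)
The paper does not actually prove this theorem: it is simply stated with attribution, preceded by ``The following is proven in \cite{AroneChing}.'' So there is no proof in this paper to compare your proposal against; the result is quoted from Arone--Ching's original paper.

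That said, a few comments on your outline. The overall strategy --- extend $P_\infty^{\mit{fake}}$ to arbitrary bimodules, identify the layers, build unit and counit, check triangle identities --- is the natural one, and your identification of the layers $D_k^{\mit{fake}}$ is exactly what the paper records just before Theorem~\ref{thm:ACfake}. However, your description of $\partial_* R_X$ as ``a free right $\partial_*\Id_\mc{C}$-module on one generator in arity zero'' is off: the symmetric sequences in this section are explicitly assumed to have trivial $0$th term, and $\partial_* R_X$ is not free in any naive sense. In the case $\mc{C} = \Top_*$ the paper later uses (in the proof of Theorem~\ref{thm:AroneChingvn}) that $\partial_* R_X \simeq B(\Sigma^\infty X^{\wedge *}, \Comm, 1)^\vee$, which is rather a cofree object on the Koszul-dual side; this is what makes the bimodule mapping space computable. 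Your instinct that Koszul duality (Theorem~\ref{thm:Koszul}) is the key tool for unwinding the bimodule structure is correct, but the precise mechanism is that $\partial_* R_X$ arises via the cobar construction from something whose comodule mapping spaces are transparent, not that it is free as a module.

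You also assert that the counit $\varepsilon_\mc{M}$ is an equivalence. This is a stronger claim than the theorem (it says $P_\infty^{\mit{fake}}$ is fully faithful), and while it may well be true, it is not needed for the adjunction and you should be careful to separate it out. For the adjunction itself one only needs the triangle identities, not that either unit or counit be an equivalence.
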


In particular, one can now employ the comonadic descent theory of Section~\ref{sec:general} to regard the derivatives as taking values in $\partial_* \circ P_\infty^{\mit{fake}}$-comodules.

\begin{thm}[Arone-Ching \cite{AroneChing}]\label{thm:AC}
The Goodwillie tower of a functor $F$ can be recovered using the comonadic cobar construction
$$ P_nF \simeq C(P^{\mit{fake}}_\infty, \partial_* \circ P^{\mit{fake}}_\infty, \tau_n \partial_* F). $$
\end{thm}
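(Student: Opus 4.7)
The plan is to exhibit $P_n F$ as the comonadic cobar construction by combining the adjunction of the preceding theorem with an inductive argument on the Goodwillie tower, correcting the fake tower layer-by-layer using the coaction data.

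First, I would set up the comonadic framework. The adjunction $\partial_* \dashv P_\infty^{\mit{fake}}$ yields a comonad $T := \partial_* \circ P_\infty^{\mit{fake}}$ on the category of bimodules, and for every functor $G$ the derivatives $\partial_* G$ become a $T$-coalgebra via $\partial_*$ applied to the unit $G \to P_\infty^{\mit{fake}} \partial_* G$. This yields a comparison natural transformation
\[
\alpha_G : G \longrightarrow C(P_\infty^{\mit{fake}},\, T,\, \partial_* G)
\]
for every functor $G$. Applying this to $G = P_n F$ and identifying $\partial_*(P_n F) \simeq \tau_n \partial_* F$ as bimodules (a standard consequence of the fact that truncating the Goodwillie tower does not affect derivatives in degrees $\leq n$, while killing those in degrees $> n$), we obtain the candidate map
\[
P_n F \longrightarrow C(P_\infty^{\mit{fake}},\, T,\, \tau_n \partial_* F).
\]
The task is to show this is an equivalence.

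The heart of the argument is induction on $n$. For $n = 1$, both sides are $1$-excisive, and a direct comparison shows $\alpha_{P_1 F}$ is an equivalence because $P_1 F(X) \simeq \Omega^\infty_{\mc{D}}(\partial_1 F \wedge \Sigma^\infty_{\mc{C}} X)$ already agrees with the fake $P_1^{\mit{fake}}$ (there is no Tate obstruction at $\Sigma_1$), and the cobar collapses since $\tau_1 \partial_* F$ is concentrated in a single arity. For the inductive step, apply the functor $C(P_\infty^{\mit{fake}}, T, -)$ to the fiber sequence of bimodules
\[
\tau_n \partial_* F / \tau_{n-1} \longrightarrow \tau_n \partial_* F \longrightarrow \tau_{n-1} \partial_* F
\]
and compare with the layer fiber sequence $D_n F \to P_n F \to P_{n-1} F$. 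Using the induction hypothesis on $P_{n-1} F$, the problem reduces to showing that the comonadic cobar construction applied to a bimodule concentrated in arity $n$ recovers the correct homogeneous layer $D_n F$, that is, the $\Sigma_n$-\emph{homotopy orbits} rather than the fixed points delivered by the fake tower.

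The main obstacle, and the crux of the theorem, is this last point: reconciling the norm map discrepancy of Theorem~\ref{thm:ACfake}. The fake tower gives fixed points, while the true layers give orbits, so the cobar construction must compensate for this gap. The key insight is that while $P_\infty^{\mit{fake}}$ alone produces fixed points at every level, the iterated composites $T^{\bullet}$ in the cobar contribute higher-arity Tate-like terms, and after totalization the alternating $\Sigma_n$-invariants cancel to leave homotopy orbits. Making this precise requires a careful spectral sequence or filtration argument on $\Tot$ of the cosimplicial object, tracking how the comonad's coproduct $T \to T \circ T$ encodes the descent data that distinguishes $D_n F$ from the $n$-th fake layer. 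Once this layer-by-layer identification is established, the fiber-sequence comparison closes the induction, yielding the stated equivalence.
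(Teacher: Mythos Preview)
The paper does not give a proof of this theorem: it is stated with attribution to Arone--Ching \cite{AroneChing} and followed only by the remark that in the spectra-to-spectra case it reduces to McCarthy's classification. So there is no ``paper's own proof'' to compare against.

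That said, your outline has the right architecture --- set up the comonad from the adjunction, induct on $n$, reduce to the homogeneous case --- but the step you flag as the crux is also where your argument becomes a hope rather than a proof. You write that in the cobar construction ``the alternating $\Sigma_n$-invariants cancel to leave homotopy orbits,'' but this is not a mechanism. There is no alternating-sum phenomenon in a cosimplicial totalization that turns $(-)^{h\Sigma_n}$ into $(-)_{h\Sigma_n}$; the norm cofiber is a Tate spectrum, and Tate spectra do not generally vanish after applying $\Tot$ to some cosimplicial resolution. What actually happens in Arone--Ching's argument is specific to the structure of the comonad $\partial_* P_\infty^{\mathit{fake}}$: one must analyze how the coaction encodes the attaching data of the Goodwillie tower and verify, essentially by hand (or via their earlier chain-rule machinery), that the descent object built from this data agrees with $P_nF$. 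Your sketch gestures at a ``spectral sequence or filtration argument on $\Tot$'' but does not supply the input that would make such an argument go through.

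In short: correct diagnosis of where the difficulty lies, but the resolution you offer is a placeholder, not an argument. If you want to pursue this line you would need to engage with the actual proof in \cite{AroneChing}, which is substantially more involved than what you have indicated.
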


In the case of functors from spectra to spectra, this theorem reduces to McCarthy's classification of polynomial functors \cite{McCarthy}.

\subsection*{Application to the Bousfield-Kuhn functor}

We now summarize Arone and Ching's approach to Theorem~\ref{thm:main2}.  Actually, their method proves something stronger, as it applies to the functor $\Phi_n$ instead of $\Phi_{K(n)}$.  Call a space \emph{$\Phi_n$-good} if the map 
$$ \Phi_n X \rightarrow \holim_k \Phi_n P_k \Id_{\Top_*} (X) $$
is an equivalence.

\begin{thm}[Arone-Ching]\label{thm:AroneChingvn}
For all finite $X$, the comparison map
$$ c_X: P_\infty\Phi_n(X) \rightarrow \TAQ_{S_{T(n)}}(S_{T(n)}^X) $$
is an equivalence.  Thus for all finite $\Phi_n$-good spaces, the comparison map gives an equivalence
$$ c_X: \Phi_n(X) \xrightarrow{\simeq} \TAQ_{S_{T(n)}}(S_{T(n)}^X). $$
\end{thm}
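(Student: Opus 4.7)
The plan is to leverage the Arone-Ching classification of polynomial functors (Theorem~\ref{thm:AC}) in order to bypass the computational induction of the authors' original approach. The essential observation is that in the $T(n)$-local target category, Tate vanishing (Theorem~\ref{thm:tate}) forces the fake Taylor tower to agree with the Goodwillie tower: the fibers of $P_n F \to P^{\mit{fake}}_n F$ described in Theorem~\ref{thm:ACfake} sit in sequences involving $T(n)$-local Tate spectra, which are $T(n)$-acyclic. Consequently, for any functor $F : \Top_* \to \Sp_{T(n)}$ the Goodwillie tower of $F$ is recovered functorially from the right $\partial_*\Id_{\Top_*}$-module $\partial_* F$, and the theorem reduces to verifying that $c_X$ induces an equivalence of derivatives \emph{as modules}.

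First I would identify both sides at the level of symmetric sequences. For $\Phi_n$, Lemma~\ref{lem:Phitower} gives $\partial_k \Phi_n \simeq (s^{-1}\Lie_k)_{T(n)}$ equipped with its tautological right module structure inherited from $\partial_* \Id_{\Top_*}$. For $\TAQ_{S_{T(n)}}(S_{T(n)}^{(-)})$, Corollary~\ref{cor:TAQtower} identifies $\partial_k$ with the same symmetric sequence $(s^{-1}\Lie_k)_{T(n)}$; the right $\partial_* \Id_{\Top_*}$-module structure comes from Ching's identification $\partial_* \Id_{\Top_*} \simeq (B\Comm_S)^\vee$ of Example~\ref{ex:spectrallie} together with the description of $\TAQ$ as an operadic bar construction.

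With these identifications in place, the heart of the argument is to show that $c_X$ is a morphism of fake Taylor towers inducing this equivalence on $\partial_*$. The comparison map is manufactured from the universal $\Comm$-algebra map $\mc{F}_{\Comm} S_{T(n)}^X \to S_{T(n)}^{QX}$ together with the adjunction $\Sigma^\infty_{\Alg_\Comm} \dashv \Omega^\infty_{\Alg_\Comm}$ of Corollary~\ref{cor:TAQ}, and tracing through this construction shows that the induced map of $k$th derivatives is, up to canonical equivalences, the identity on $(s^{-1}\Lie_k)_{T(n)}$. Once established, Theorem~\ref{thm:AC}, specialized to the case $P^{\mit{fake}}_\infty = P_\infty$ forced by $T(n)$-local target, promotes the equivalence of module derivatives to an equivalence of the full Goodwillie towers, giving the first assertion. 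The second follows: for finite $\Phi_n$-good $X$, one has $\Phi_n X \simeq P_\infty \Phi_n(X) \simeq P_\infty \TAQ_{S_{T(n)}}(S_{T(n)}^X) \simeq \TAQ_{S_{T(n)}}(S_{T(n)}^X)$, where the last equivalence is Kuhn convergence (\ref{eq:kuhnconv}).

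The hard part will be verifying that $c_X$ respects the right $\partial_*\Id_{\Top_*}$-module structures on derivatives. Abstract equivalence at the level of symmetric sequences is essentially forced by the shared identification with $(s^{-1}\Lie_S)_{T(n)}$, but showing compatibility of the actions requires care. A conceptual route is to transport the problem across Ching's Koszul duality for modules (Theorem~\ref{thm:Koszul}), translating both sides into right $B(\partial_*\Id_{\Top_*})$-comodules; on both sides the comodule structure ultimately descends from the diagonal of $X$ through a Snaith-splitting argument, which renders the comparison feasible, though setting it up rigorously at the level of homotopy-coherent structure is what occupies the bulk of the Arone-Ching work.
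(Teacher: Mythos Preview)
Your proposal shares the two essential ingredients with the paper's proof --- Tate vanishing collapses the Goodwillie tower to the fake Taylor tower, and Ching's Koszul duality for right modules (Theorem~\ref{thm:Koszul}) does the heavy lifting --- but you organize them differently, and the difference matters.

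The paper does \emph{not} attempt to show that $c_X$ induces an equivalence of right $\partial_*\Id_{\Top_*}$-modules on derivatives and then invoke the classification Theorem~\ref{thm:AC}. Instead it works directly with the defining formula for the fake Taylor tower: once Tate vanishing gives $P_\infty\Phi_n(X) \simeq P_\infty^{\mit{fake}}\Phi_n(X)$, one has
\[
P_\infty\Phi_n(X) \;\simeq\; \ul{\mr{rt.\Mod}}_{s^{-1}\Lie}\bigl(\partial_* R_X,\; s^{-1}\Lie_{T(n)}\bigr),
\]
and the paper then transforms this \emph{mapping space} step by step: identify $\partial_* R_X$ with $B(\Sigma^\infty X^{\wedge *}, \Comm, 1)^\vee$, apply Koszul duality to pass to $\ul{\mr{rt.Comod}}_{\Comm^\vee}(S_{T(n)}^{X^{\wedge *}}, 1_{S_{T(n)}})$, recognize this as $\ul{\Alg}_{\Comm}(S_{T(n)}^X, \mr{triv}\, S_{T(n)})$, and finally invoke Corollary~\ref{cor:TAQ} to reach $\TAQ_{S_{T(n)}}(S_{T(n)}^X)$. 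The comonadic reconstruction theorem is not used.

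This route sidesteps exactly the step you flag as hard. There is no need to verify that $c_X$ is compatible with right module structures on derivatives, because the argument never compares two functors through their module-derivatives; it computes the source as a mapping space and rewrites it until the target appears. Your strategy would work in principle, but the coherence check you anticipate is genuinely nontrivial and is precisely what the paper's organization dissolves. What your approach would buy, if carried out, is a more explicit identification of the equivalence with the specific natural transformation $c_X$ constructed in Section~\ref{sec:comparison}; the paper's chain of equivalences is canonical enough that this identification is plausible, but it is not spelled out.
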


\begin{proof}
The basic strategy is to analyze the fake taylor tower of the functor
$$ \Phi_n: \Top_* \rightarrow \Sp_{T(n)}. $$
The argument used in Lemma~\ref{lem:Phitower} applies equally well to $\Phi_n$, and it follows that we have
$$ \partial_*\Phi_n \simeq s^{-1}\Lie_{T(n)} $$
with right $\partial_* \Id = s^{-1}\Lie$ structure given by localization of the right action of this operad on itself.
By Theorem~\ref{thm:tate}, the map
\begin{align*}
D_k\Phi_n(X) & = \left\lbrack \left((s^{-1} \Lie_k)_{T(n)} \wedge X^k\right)_{h\Sigma_k} \right\rbrack_{T(n)} \\
& \xrightarrow{N}
\left((s^{-1} \Lie_k)_{T(n)} \wedge X^k\right)^{h\Sigma_k} \\
& = D^{\mit{fake}}_k\Phi_n(X)
\end{align*}
of Theorem~\ref{thm:ACfake} is an equivalence.  Thus in the $T(n)$-local context, the fake Taylor tower agrees with the Goodwillie tower. 
Using \cite[Lemma 6.14]{AroneChing} and Theorem~\ref{thm:Koszul}, we have
\begin{align*}
P_\infty\Phi_n(X) 
& \simeq \ul{\mr{rt.\Mod}}_{s^{-1}\Lie}(\partial_* R_X, s^{-1}\Lie_{T(n)}) \\
& \simeq \ul{\mr{rt.\Mod}}_{s^{-1}\Lie}(B(\Sigma^\infty X^{\wedge *}, \Comm, 1)^\vee, s^{-1}\Lie_{T(n)})  \\
& \simeq \ul{\mr{rt.\Mod}}_{s^{-1}\Lie}\left(C(S_{T(n)}^{X^{\wedge *}}, \Comm_{T(n)}^\vee, 1), C(1,\Comm^\vee_{T(n)},1)\right)  \\
& \simeq \ul{\mr{rt.Comod}}_{\Comm^\vee}(S_{T(n)}^{X^{\wedge *}}, 1_{S_{T(n)}})  \\
& \simeq \Alg_{\Comm}(S_{T(n)}^X, \mr{triv} S_{T(n)}) \\
& \simeq \TAQ_{S_{T(n)}}(S_{T(n)}^X). \\
\end{align*}
\end{proof}

\section{The Heuts approach}\label{sec:Heuts}

The approach of Arone and Ching described in the last section arose from a classification theory of Goodwillie towers.
In this section we describe Heuts' general theoretical framework, which arises from classifying unstable homotopy theories with a fixed stablization \cite{Heuts1}. Our goal is simply to give enough of the idea of the theory to sketch Heuts' proof of Theorem~\ref{thm:main2}.  
We refer the reader to the source material for a proper and more rigorous treatment. 

Like the approach of Arone-Ching, Heuts' proof is more conceptual than ours, and his results have the potential to be slightly more general that Theorem~\ref{thm:AroneChingvn}, in that they seem to indicate that by modifying the comparison map to have target derived primitives of a coalgebra, the comparison map $c_X$ may be an equivalence for \emph{all} $\Phi_n$-good spaces (not just finite spaces --- see Question~\ref{ques:phigood2} and Remark~\ref{rmk:phigood2}).  

Unlike the previous sections, where we worked in a setting of actual categories with weak equivalences, in this section we work in the setting of $\infty$-categories.
For the purposes of this section, $\mc{C}$ will always denote an arbitrary pointed compactly generated $\infty$-category.  

\subsection*{$\infty$-operads and cross-effects}

The adjunction
$$ \Sigma_{\mc{C}}^\infty: \mc{C} \leftrightarrows \Sp(\mc{C}):  \Omega^\infty_{\mc{C}} $$
gives rise to a comonad $\Sigma^\infty_\mc{C} \Omega^\infty_\mc{C}$ on $\Sp(\mc{C})$.  Lurie \cite{Lurie} observes that the multilinearized cross effects
$$ \otimes_{\mc{C}}^n := \mr{cr}^{lin}_n(\Sigma^\infty_{\mc{C}} \Omega^\infty_{\mc{C}}): \Sp(\mc{C})^n \rightarrow \Sp(\mc{C}) $$
get an additional piece of algebraic structure: they corepresent a symmetric multicategory structure on $\Sp(\mc{C})$
in the sense that the mapping spaces 
$$ \ul{\Sp(\mc{C})}\left(\otimes^n_{\mc{C}}(Y_1, \ldots, Y_n), Y\right) $$
endow $\Sp(\mc{C})$ with the structure of a symmetric multicategory enriched in spaces.

If $\Sp(\mc{C}) \simeq \Sp$, then (as discussed in the beginning of Section~\ref{sec:proof}) we have
$$ 
{\otimes}^n_{\mc{C}}(Y_1, \ldots, Y_n) \simeq  \partial_n(\Sigma^\infty_\mc{C} \Omega^\infty_{\mc{C}}) \wedge Y_1 \wedge \cdots \wedge Y_n. $$
Saying that the cross-effects $\otimes^n_{\mc{C}}$ corepresent a symmetric multicategory is equivalent to saying that the derivatives $\partial_*(\Sigma^\infty_{\mc{C}} \Omega^\infty_{\mc{C}})$ form a cooperad.  In this context, this fact was first observed by Arone and Ching \cite{AroneChingchain}, who proved that the derivatives of any comonad on $\Sp$ form a cooperad.

\begin{rmk}
In the language of Lurie, $(\Sp(\mc{C}), \otimes^*_{\mc{C}})$ forms a \emph{stable $\infty$-operad}.  This terminology comes from the fact that a symmetric multicategory is the same thing as a (colored) operad.  We will deliberately avoid this terminology in our treatment, as it may seem somewhat confusing that a stable $\infty$-operad on $\Sp$ is encoded by a \emph{co}operad in $\Sp$.
\end{rmk}

The linearizations of the diagonals in $\mc{C}$
$$ \Delta^n: X \rightarrow X^{\times n} $$
gives rise to $\Sigma_n$-equivariant maps
$$ \Delta^n: \Sigma^\infty_{\mc{C}} X \rightarrow 
\otimes^n_{\mc{C}}(\Sigma^\infty_{\mc{C}} X, \cdots, \Sigma^\infty_\mc{C}X) =:
(\Sigma^\infty_{\mc{C}} X)^{\otimes_{\mc{C}} n} $$
which yield maps
$$ \Delta^n: \Sigma^\infty_{\mc{C}} X \rightarrow 
\left( (\Sigma^\infty_{\mc{C}} X)^{\otimes_{\mc{C}} n} \right)^{h\Sigma_n}. $$
Composing out to the Tate spectrum gives maps
\begin{equation}\label{eq:tatediags}
\delta^n_\mc{C}: \Sigma^\infty_{\mc{C}} X \rightarrow 
\left( (\Sigma^\infty_{\mc{C}} X)^{\otimes_{\mc{C}} n} \right)^{t\Sigma_n}.
\end{equation}
Heuts \cite{Heuts1} refers to these maps as \emph{Tate diagonals}.
In the context of $\mc{C} = \Top_*$, these natural transformations are well studied: their target is closely related to Jones-Wegmann homology (see \cite[II.3]{BMMS}) and the topological Singer construction of Lun\o e-Nielsen-Rognes \cite{LunoeNielsenRognes}.

\subsection*{Polynomial approximations of $\infty$-categories}

Heuts constructs \emph{polynomial approximations} $P_n\mc{C}$: these are $\infty$-categories equipped with adjunctions
$$ \Sigma^\infty_{\mc{C},n}: \mc{C} \leftrightarrows P_n\mc{C}: \Omega^\infty_{\mc{C},n} $$
so that
$$ P_n\Id_\mc{C}(X) \simeq \Omega^\infty_{\mc{C},n} \Sigma^\infty_{\mc{C},n} X. $$
The $\infty$-categories $P_n\mc{C}$ are determined by universal properties which we will not specify here.  We do point out that the identity functor $\Id_{P_n\mc{C}}$ is $n$-excisive.  We have $P_1\mc{C} \simeq \Sp(\mc{C})$.  For $n \le m$ we have 
$$ P_nP_m \mc{C} \simeq P_n\mc{C} $$
and therefore we get a tower
$$
\xymatrix@C+1em@R+1em{
\mc{C} \ar[d]_{\Sigma^\infty_{\mc{C},1}} \ar[dr]^{\Sigma^\infty_{\mc{C},2}} \\
P_1\mc{C} & P_2 \mc{C} \ar[l]^{\Sigma^\infty_{P_2 \mc{C}, 1}} & \cdots \ar[l]^{\Sigma^\infty_{P_3\mc{C},2}}  
}$$
We shall say that an object $X$ of $\mc{C}$ is \emph{convergent} if the Goodwillie tower of $\Id_\mc{C}$ converges at $X$.
Heuts proves that the induced functor 
$$ \mc{C} \rightarrow P_\infty\mc{C} := \holim_n P_n \mc{C} $$
restricts to a full and faithful embedding on the full $\infty$-subcategory 
$\mc{C}^{\mr{conv}}$ of convergent objects.

Let $\mc{C}^{\text{n-conv}}$ denote the full $\infty$-subcategory of $\mc{C}$ consisting of objects for which the map 
$$ X \rightarrow P_n\Id_{\mc{C}}(X) $$
is an equivalence.  Then we have

\begin{lem}\label{lem:nconv}
The functor
$$ \Sigma^\infty_{\mc{C},n}: \mc{C}^{\text{\rm n-conv}} \rightarrow P_n\mc{C} $$
is fully faithful.
\end{lem}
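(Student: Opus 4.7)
The plan is to recognize $\mc{C}^{\text{n-conv}}$ as precisely the full subcategory of $\mc{C}$ on which the unit of the adjunction $\Sigma^\infty_{\mc{C},n} \dashv \Omega^\infty_{\mc{C},n}$ is an equivalence, and then invoke the standard $\infty$-categorical fact that a left adjoint restricts to a fully faithful functor on the full subcategory of objects whose unit map is invertible.

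First I would verify the identification of maps. The equivalence $P_n\Id_\mc{C}(X) \simeq \Omega^\infty_{\mc{C},n}\Sigma^\infty_{\mc{C},n}X$ asserted in the paper is natural in $X$, and under it the canonical map $X \to P_n\Id_\mc{C}(X)$ is identified with the adjunction unit $\eta_X \colon X \to \Omega^\infty_{\mc{C},n}\Sigma^\infty_{\mc{C},n}X$. This is forced by the universal property characterizing $P_n\Id_\mc{C}$ as the initial $n$-excisive functor under $\Id_\mc{C}$, combined with the observation that $\Omega^\infty_{\mc{C},n}\Sigma^\infty_{\mc{C},n}$ is $n$-excisive (since $\Id_{P_n\mc{C}}$ is $n$-excisive by construction of $P_n\mc{C}$). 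In particular $X \in \mc{C}^{\text{n-conv}}$ if and only if $\eta_X$ is an equivalence.

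Granted this identification, for any $X, Y \in \mc{C}^{\text{n-conv}}$ the natural chain
\[
\ul{\mc{C}}(X, Y) \xrightarrow{(\eta_Y)_*} \ul{\mc{C}}(X, \Omega^\infty_{\mc{C},n}\Sigma^\infty_{\mc{C},n}Y) \simeq \ul{P_n\mc{C}}(\Sigma^\infty_{\mc{C},n}X, \Sigma^\infty_{\mc{C},n}Y)
\]
consists of equivalences: the first because $\eta_Y$ is an equivalence by hypothesis on $Y$, and the second by the defining adjunction equivalence. The triangle identities imply that the composite is exactly the map on mapping spaces induced by $\Sigma^\infty_{\mc{C},n}$, and so $\Sigma^\infty_{\mc{C},n}\colon \mc{C}^{\text{n-conv}}\to P_n\mc{C}$ is fully faithful.

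The main obstacle, to the extent there is one, is the bookkeeping in the first step: identifying the canonical map $X\to P_n\Id_\mc{C}(X)$ with the adjunction unit $\eta_X$ requires unwinding Heuts' construction of $P_n\mc{C}$ and his formulation of its universal property in the $\infty$-categorical setting. Once this identification is in hand, the lemma reduces to the standard criterion that a left adjoint is fully faithful on the full subcategory of objects at which its unit is invertible.
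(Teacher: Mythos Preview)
Your proof is correct and follows essentially the same route as the paper's: both arguments use the chain $\ul{\mc{C}}(X,Y)\simeq \ul{\mc{C}}(X,\Omega^\infty_{\mc{C},n}\Sigma^\infty_{\mc{C},n}Y)\simeq \ul{P_n\mc{C}}(\Sigma^\infty_{\mc{C},n}X,\Sigma^\infty_{\mc{C},n}Y)$, the first step coming from the $n$-convergence of $Y$ and the second from the adjunction. You have simply made explicit what the paper leaves implicit, namely that the map $X\to P_n\Id_{\mc{C}}(X)$ is the adjunction unit and that the composite displayed is indeed the map induced by $\Sigma^\infty_{\mc{C},n}$.
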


\begin{proof}
We have for $X$ and $Y$ in $\mc{C}^{\text{n-conv}}$:
\begin{align*}
\ul{\mc{C}}(X,Y) 
& \simeq \ul{\mc{C}}(X,\Omega^\infty_{\mc{C},n} \Sigma^\infty_{\mc{C},n}Y)
\\
& \simeq \ul{P_n \mc{C}}(\Sigma^\infty_{\mc{C},n} X,\Sigma^\infty_{\mc{C},n} Y). 
\end{align*}
\end{proof}

The natural transformations
$$ \Sigma^\infty_\mc{C} \Omega^\infty_{\mc{C}} \simeq \Sigma^\infty_{P_n\mc{C}} \Sigma^\infty_{\mc{C},n} \Omega^\infty_{\mc{C},n} \Omega^\infty_{P_n \mc{C}} \rightarrow \Sigma^\infty_{P_n\mc{C}} \Omega^\infty_{P_n\mc{C}} $$
induce natural transformations of cross-effects
$$ \otimes^k_\mc{C} \rightarrow \otimes^k_{P_n\mc{C}}. $$
For $k \le n$ these natural transformations are equivalences.

As the source and target of the Tate diagonals (\ref{eq:tatediags}) are $(n-1)$-excisive functors of $X$ (see \cite{KuhnTate}), the Tate diagonals extend to give natural transformations of functors $P_{n-1}\mc{C} \rightarrow \Sp(\mc{C})$:
$$ \delta^n_\mc{C}: \Sigma^\infty_{P_{n-1}\mc{C}} X \rightarrow 
\left( (\Sigma^\infty_{P_{n-1}\mc{C}} X)^{\otimes_{\mc{C}} n} \right)^{t\Sigma_n}. $$
We emphasize that, as the notation suggests, the Tate diagonals $\{\delta^n_\mc{C}\}_n$ depend not only on the functors $\otimes^*_\mc{C}$ on $\Sp(\mc{C})$, but also on the unstable category $\mc{C}$ itself.

\subsection*{A spectral algebra model for $P_n\mc{C}$}

Heuts gives a model for $P_n\mc{C}$ as a certain category of coalgebras in $\Sp(\mc{C})$.  As the theory of homotopy descent of Section~\ref{sec:general} would have us believe, a good candidate spectral algebra model would be to consider $\Sigma^\infty_{P_n\mc{C}}\Omega^\infty_{P_n\mc{C}}$-coalgebras.  We must analyze what it means for $Y \in \Sp(\mc{C})$ to have a coalgebra structure map
$$ Y \rightarrow \Sigma^\infty_{P_n\mc{C}}\Omega^\infty_{P_n\mc{C}} Y. $$
This is closely related to having a structure map
$$ Y \rightarrow P_n(\Sigma^\infty_\mc{C} \Omega^\infty_\mc{C} ) Y. $$

A general theorem of McCarthy \cite{McCarthy}, as formulated by \cite{KuhnTate}\footnote{To be precise, this is established by McCarthy and Kuhn in the case where $\mc{C} = \Top_*$.}, applies to the functor $\Sigma^\infty_\mc{C} \Omega^\infty_\mc{C}$ to give a homotopy pullback
$$
\xymatrix{
P_n(\Sigma^\infty_{\mc{C}}\Omega^\infty_\mc{C})(Y) \ar[r] \ar[d] 
& (Y^{\otimes_{\mc{C}} n})^{h\Sigma_n} \ar[d] \\
P_{n-1}(\Sigma^\infty_{\mc{C}}\Omega^\infty_\mc{C})(Y) \ar[r] 
& (Y^{\otimes_{\mc{C}} n})^{t\Sigma_n}
}
$$
Thus inductively a $\Sigma^\infty_{P_n\mc{C}}\Omega^\infty_{P_n\mc{C}}$-coalgebra is determined by the data of a map
$$ Y \rightarrow P_{n-1}(\Sigma^\infty_\mc{C}\Omega^\infty_\mc{C})(Y) $$
and a lifting\footnote{This is something the first author learned from Arone.}
$$
\xymatrix{
&& (Y^{\otimes_{\mc{C}} n})^{h\Sigma_n} \ar[d] \\
Y \ar[r] \ar@/^1pc/@{.>}[rru] & P_{n-1}(\Sigma^\infty_{\mc{C}}\Omega^\infty_\mc{C})(Y) \ar[r] 
& (Y^{\otimes_{\mc{C}} n})^{t\Sigma_n}
}
$$
The bottom composite agrees with the Tate diagonal $\delta^n_{\mc{C}}$ for $Y = \Sigma^\infty_{\mc{C},n-1} X$.

We will refer to these coalgebras as \emph{Tate-compatible $\otimes^{\le n}_\mc{C}$-coalgebras}, and denote the $\infty$-category of such
$$ \mr{Tate}\Coalg_{\otimes^{\le n}_{\mc{C}}}. $$
Roughly speaking, a Tate-compatible $\otimes^{\le n}_\mc{C}$-coalgebra is an object $Y \in \Sp(\mc{C})$ equipped with inductively defined structure consisting of coaction maps
$$ \Delta^k: Y \rightarrow (Y^{\otimes_\mc{C} k})^{h\Sigma_k} $$ 
for $k \le n$, and homotopies $H_k$ making the following diagrams homotopy commute
$$
\xymatrix@C+1em{
& (Y^{\otimes_\mc{C} n})^{h\Sigma_k} \ar[d]
\\
Y \ar[ur]^{\Delta^k} \ar[r]_-{\delta^k_{\mc{C}}} & (Y^{\otimes_\mc{C} k})^{t\Sigma_k}
}
$$
The coaction maps $\Delta^k$ and the homotopies $H_k$ are required to satisfy compatibility conditions which we will not (and likely cannot!) explicitly specify.\footnote{Heuts is able to circumvent the need to explicitly spell out these compatibility conditions by defining the $\infty$-categories $\mr{Tate}\Coalg_{\otimes^{\le n}_{\mc{C}}}$  via an inductive sequence of fibrations of $\infty$-categories.}
The maps $\Delta^k$ and homotopies $H_k$ for $k \le n$ then induce the $(n+1)$st Tate diagonal
$$ \delta^{n+1}_{\mc{C}}: Y \rightarrow (Y^{\otimes_\mc{C} {n+1}})^{t\Sigma_{n+1}}
$$
and the process continues.  Note that the Tate diagonal $\delta^{n+1}_{\mc{C}}$ depends not only on the structure maps $\Delta^k$ and $H_k$ for $k \le n$, but also the unstable category $\mc{C}$ itself (more precisely, it depends only on the polynomial approximation $P_n\mc{C}$).

\begin{thm}[Heuts]\label{thm:Tatecoalgebra}
There is an equivalence of $\infty$-categories
$$ P_n\mc{C} \simeq \mr{Tate}\Coalg_{\otimes^{\le n}_\mc{C}}. $$
\end{thm}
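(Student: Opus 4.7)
The plan is to prove the equivalence by induction on $n$. The base case $n=1$ is immediate: $P_1\mc{C} \simeq \Sp(\mc{C})$ by definition, and a Tate-compatible $\otimes^{\le 1}_\mc{C}$-coalgebra is just an object of $\Sp(\mc{C})$, with no additional structure (since no Tate diagonals $\delta^k_\mc{C}$ appear for $k \le 1$). Assume inductively that $P_{n-1}\mc{C} \simeq \mr{Tate}\Coalg_{\otimes^{\le n-1}_\mc{C}}$. The job is to bootstrap this to an equivalence one level higher.

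The core tool will be Lurie's $\infty$-categorical Barr-Beck comonadicity theorem applied to the stabilization adjunction $\Sigma^\infty_{P_n\mc{C},1} \dashv \Omega^\infty_{P_n\mc{C},1}$ between $P_n\mc{C}$ and $\Sp(P_n\mc{C}) \simeq \Sp(\mc{C})$. First I would verify that this adjunction is comonadic: the identity on $P_n\mc{C}$ is $n$-excisive, and $\Sigma^\infty_{P_n\mc{C},1}$ is conservative on objects in the essential image of $P_n$ (any object $Y\in P_n\mc{C}$ is recovered from its polynomial approximation tower, whose layers are determined by the stabilization). Comonadicity would then identify $P_n\mc{C}$ with $\Coalg_{T_n}(\Sp(\mc{C}))$, where $T_n := \Sigma^\infty_{P_n\mc{C},1} \Omega^\infty_{P_n\mc{C},1}$. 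Observe that $T_n \simeq P_n(\Sigma^\infty_\mc{C}\Omega^\infty_\mc{C})$, since both are $n$-excisive approximations to $\Sigma^\infty_\mc{C}\Omega^\infty_\mc{C}$ computed with respect to the same stabilization.

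Next I would use the McCarthy/Kuhn pullback square
\[
\xymatrix{
T_n(Y) \ar[r] \ar[d] & (Y^{\otimes_\mc{C} n})^{h\Sigma_n} \ar[d] \\
T_{n-1}(Y) \ar[r]^-{\delta^n_\mc{C}} & (Y^{\otimes_\mc{C} n})^{t\Sigma_n}
}
\]
to unpack the data of a $T_n$-coalgebra structure on $Y$. Because limits of coalgebras are computed on underlying objects and the square is a pullback, giving a $T_n$-coalgebra map $Y \to T_n(Y)$ is equivalent to giving a $T_{n-1}$-coalgebra map $Y \to T_{n-1}(Y)$ (which, by induction hypothesis together with the $T_{n-1}$-version of comonadicity, is the same as a Tate-compatible $\otimes^{\le n-1}_\mc{C}$-coalgebra structure), together with a lift $\Delta^n: Y \to (Y^{\otimes_\mc{C} n})^{h\Sigma_n}$ of the composite $\delta^n_\mc{C} \circ (\text{coaction to } T_{n-1}(Y))$, equipped with the requisite homotopy $H_n$. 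This is precisely the inductive step in the construction of $\mr{Tate}\Coalg_{\otimes^{\le n}_\mc{C}}$, so the pullback square of $\infty$-categories matches the defining pullback for Tate-compatible coalgebras, completing the induction.

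The main obstacle will be the comonadicity verification: Barr-Beck requires $\Sigma^\infty_{P_n\mc{C},1}$ to preserve and reflect relevant equalizers (or, in the $\infty$-categorical form, totalizations of $\Omega^\infty_{P_n\mc{C},1}$-split cosimplicial objects), and checking this cleanly in an arbitrary compactly generated pointed $\infty$-category $\mc{C}$ requires exhibiting enough control over the Goodwillie tower of $\Id_{P_n\mc{C}}$. A secondary subtlety is the bookkeeping of higher coherences implicit in the phrase ``Tate-compatible'': to genuinely identify the $\infty$-categorical pullback with $\mr{Tate}\Coalg_{\otimes^{\le n}_\mc{C}}$ one must match the inductive fibration sequence defining Tate-compatible coalgebras with the pullback of coalgebra categories induced by the McCarthy square, which is why Heuts' own definition proceeds by that very inductive fibration — it is designed precisely so that these two towers coincide by construction, making the final identification essentially tautological once comonadicity is in place.
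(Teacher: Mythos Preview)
The paper does not actually supply a proof of this theorem: it is a survey, and Theorem~\ref{thm:Tatecoalgebra} is stated as a result of Heuts with the reader referred to \cite{Heuts1} for the argument. What the paper \emph{does} provide is a heuristic description of the ingredients --- the McCarthy/Kuhn pullback square for $P_n(\Sigma^\infty_\mc{C}\Omega^\infty_\mc{C})$ and the inductive-fibration definition of $\mr{Tate}\Coalg_{\otimes^{\le n}_\mc{C}}$ --- and your proposal is precisely an attempt to assemble these ingredients into a proof. In that sense your outline is faithful to the approach the paper is sketching on Heuts' behalf: comonadicity of the stabilization adjunction on $P_n\mc{C}$, identification of the resulting comonad with $P_n(\Sigma^\infty_\mc{C}\Omega^\infty_\mc{C})$, and then an inductive unwinding via the McCarthy square that matches Heuts' inductive definition of Tate-compatible coalgebras.

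Your self-identified obstacles are the right ones. The comonadicity of $\Sigma^\infty_{P_n\mc{C},1}$ is genuinely where the content lies (conservativity follows from convergence of the Goodwillie tower of $\Id_{P_n\mc{C}}$, since that tower terminates at stage $n$ and has layers built from $\Sigma^\infty X$; the cosplit-totalization condition needs the compact-generation hypothesis and an argument). The coherence bookkeeping --- ensuring that the pullback of coalgebra $\infty$-categories induced by the McCarthy square literally agrees with Heuts' inductive fibration --- is exactly why the paper remarks in a footnote that Heuts \emph{defines} $\mr{Tate}\Coalg_{\otimes^{\le n}_\mc{C}}$ via that fibration, rendering the final step tautological. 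One point to be careful about: the paper notes that a $\Sigma^\infty_{P_n\mc{C}}\Omega^\infty_{P_n\mc{C}}$-coalgebra structure is only ``closely related to'' a $P_n(\Sigma^\infty_\mc{C}\Omega^\infty_\mc{C})$-coalgebra structure; your identification $T_n \simeq P_n(\Sigma^\infty_\mc{C}\Omega^\infty_\mc{C})$ as comonads (not just functors) needs a word of justification.
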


\begin{ques}
In the case where $F = \Id$, how is Arone-Ching's reconstruction theorem (Theorem~\ref{thm:AC}) related to the framework of Heuts?
\end{ques}

\begin{rmk}
In \cite{Heuts1}, Heuts also considers the question: what data on the stable $\infty$-category $\Sp(\mc{C})$ determines the tower of unstable categories $\{ P_n\mc{C} \}$?  As should be heuristically clear from Theorem~\ref{thm:Tatecoalgebra}, Heuts proves the tower is determined by the cross-effects $\{ \otimes^n_{\mc{C}} \}$ and the Tate diagonals $\{\delta^n_{\mc{C}}\}$. In particular, given a stable $\infty$-category $\mc{D}$, a tower of polynomial approximations of an unstable theory is determined by specifying a sequence of symmetric multilinear functors 
$$
\otimes^n : \mc{D}^n \rightarrow \mc{D}
$$ 
which corepresent a symmetric multicategory structure on $\mc{D}$, as well as a sequence of inductively defined (and suitably compatible) Tate diagonals
$$ \delta^n : \Sigma^\infty_{P_{n-1}\mc{C}} X \rightarrow (\Sigma^\infty_{P_{n-1}\mc{C}} X^{\otimes n})^{t\Sigma_n}. $$
\end{rmk}

\subsection*{Koszul duality, yet again}

Let $R$ be a commutative ring spectrum, and let $\mc{O}$ be a reduced operad in $\Mod_R$.  Following \cite{Heuts1}, we run the general theory in the case $\mc{C} = \Alg_\mc{O}$.  The cooperads representing the symmetric multilinear functors $\otimes^*_{\Alg_\mc{O}}$ on $\Sp(\Alg_\mc{O}) \simeq \Mod_R$ are determined by the following 

\begin{thm}[Francis-Gaitsgory {\cite[Lem.~3.3.4]{FrancisGaitsgory}}]
There is an equivalence of cooperads\footnote{This relies on the treatment of Koszul duality of monoids in \cite{Lurie}.  In Lurie's $\infty$-categorical treatment, the coalgebra structure on $B\mc{O}$ making this theorem true is only coherently homotopy associative.  Presumably it can be strictified to an actual point-set level operad structure on a model of $B\mc{O}$, but the authors are not knowledgeable enough to know the feasibility of this, nor do they know if this cooperad structure is equivalent to that of Ching \cite{Ching}.}
$$ \partial_*(\Sigma^\infty_{\Alg_{\mc{O}}} \Omega^\infty_{\Alg_\mc{O}}) \simeq B\mc{O}. $$
\end{thm}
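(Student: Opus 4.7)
The plan is to leverage the identification
$$ \Sigma^\infty_{\Alg_\mc{O}} \Omega^\infty_{\Alg_\mc{O}}(X) \simeq \mc{F}_{B\mc{O}}(X) \simeq \bigvee_i \left( B\mc{O}_i \wedge_R X^{\wedge_R i} \right)_{\Sigma_i} $$
already established in this section, which displays $B\mc{O}$ as the underlying symmetric sequence of derivatives. Indeed, this ``Taylor series'' presentation exhibits $\mc{F}_{B\mc{O}}$ as an analytic functor whose $k$th multilinearized cross-effect on inputs $X_1, \ldots, X_k$ is $B\mc{O}_k \wedge_R X_1 \wedge_R \cdots \wedge_R X_k$, so $\partial_k(\Sigma^\infty_{\Alg_\mc{O}} \Omega^\infty_{\Alg_\mc{O}}) \simeq B\mc{O}_k$ as $\Sigma_k$-equivariant $R$-modules. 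The substantive task is thus to match cooperad structures.

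By the Arone-Ching chain rule $\partial_*(F \circ G) \simeq \partial_* F \circ \partial_* G$, the cooperad structure on the derivatives of a comonad is induced from its comultiplication. Evaluated at $\mr{triv}\, X$, the comultiplication $\Sigma^\infty \Omega^\infty \to \Sigma^\infty \Omega^\infty \Sigma^\infty \Omega^\infty$ unfolds as a map of iterated bar constructions
$$ B(1_R, \mc{O}, \mr{triv}\, X) \longrightarrow B(1_R, \mc{O}, \mr{triv}\, B(1_R, \mc{O}, \mr{triv}\, X)), $$
obtained by applying $B(1_R, \mc{O}, -)$ to the unit of the $(\TAQ^\mc{O}, \mr{triv})$-adjunction on the $\mc{O}$-algebra $\mr{triv}\, X$. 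On the other hand, Ching's cooperad structure on $B\mc{O}$ arises from the tautological decomposition
$$ B(1_R, \mc{O}, 1_R) \simeq B(1_R, \mc{O}, B(\mc{O}, \mc{O}, 1_R)) \to B(1_R, \mc{O}, 1_R) \circ B(1_R, \mc{O}, 1_R), $$
using the equivalence $B(\mc{O}, \mc{O}, 1_R) \xrightarrow{\simeq} 1_R$. Unpacking both descriptions shows they come from the same multi-simplicial bar object, so the two coproducts agree up to homotopy as maps of symmetric sequences.

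The hard part is promoting this pointwise agreement to an equivalence of cooperads respecting all higher operadic coherences. The cleanest route, and the one followed in \cite{FrancisGaitsgory}, is to work inside the monoidal $\infty$-category $(\mathrm{SymSeq}(\Mod_R), \circ)$, viewing $\mc{O}$ as an augmented associative algebra object, $B\mc{O}$ as its Koszul dual coalgebra, and $\Alg_\mc{O}$ as the $\infty$-category of left $\mc{O}$-modules. In that framework the identification becomes an instance of the general Koszul duality equivalence for augmented algebras in stable monoidal $\infty$-categories (cf.\ \cite[Ch.~5]{Lurie}), where the cooperad structure on $\partial_*(\Sigma^\infty \Omega^\infty)$ is transparently that of $B\mc{O}$. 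The chief technical burden is setting up the composition product as a sufficiently rigid monoidal $\infty$-category and reconciling the resulting cooperad on $B\mc{O}$ with Ching's explicit simplicial model --- exactly the work shouldered in Francis-Gaitsgory's argument.
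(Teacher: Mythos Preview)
The paper does not give its own proof of this statement. It is stated as a theorem attributed to Francis--Gaitsgory (with an explicit citation to \cite[Lem.~3.3.4]{FrancisGaitsgory}), and the footnote merely flags technical caveats about the cooperad structure on $B\mc{O}$ in the $\infty$-categorical setting. So there is no in-paper argument to compare your proposal against.

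That said, your sketch is broadly consonant with what the paper \emph{does} say in the surrounding discussion. The identification of the underlying symmetric sequence $\partial_*(\Sigma^\infty_{\Alg_\mc{O}}\Omega^\infty_{\Alg_\mc{O}}) \simeq B\mc{O}$ via the chain
$$\Sigma^\infty_{\Alg_\mc{O}} \Omega^\infty_{\Alg_\mc{O}} X \simeq \TAQ^\mc{O}(\mr{triv}\, X) \simeq B(\Id, \mc{F}_\mc{O}, \mr{triv}\, X) \simeq \mc{F}_{B\mc{O}} X$$
appears earlier in Section~\ref{sec:Koszul}, and the paper's footnote explicitly points to Lurie's Koszul duality of monoids as the mechanism behind the cooperad structure --- which is exactly the route you take in your final paragraph. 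Your remark that reconciling this $\infty$-categorical cooperad with Ching's explicit simplicial model is a nontrivial technical burden also matches the authors' own hedging in the footnote (they say they ``do not know if this cooperad structure is equivalent to that of Ching'').

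One caution: your middle paragraph asserts that the Arone--Ching comultiplication and Ching's bar-construction comultiplication ``come from the same multi-simplicial bar object, so the two coproducts agree up to homotopy.'' This is plausible but is precisely the comparison the authors decline to claim, and it is not a triviality --- you would need to exhibit a compatible zig-zag of cooperad maps, not just a levelwise homotopy. If you intend this as a genuine argument rather than a heuristic, it needs more justification than you have given.
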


Therefore a $\otimes^*_{\Alg_\mc{O}}$-coalgebra $A$ is simply a $B\mc{O}$-coalgebra.
The Tate diagonals on $\Mod_R$ turn out to be null in this case, so a Tate compatible structure on a $B\mc{O}$-coalgebra $A$ is a compatible choice of liftings of the coaction maps
$$
\xymatrix{
& (B\mc{O}_i \wedge_R A^i)_{h\Sigma_i} \ar[d] \\
A \ar[r] \ar@{.>}[ur] & (B\mc{O}_i \wedge_R A^i)^{h\Sigma_i}
}
$$
Thus a Tate compatible structure is the same thing as a divided power structure (or perhaps one can take this as a definition of a divided power structure).  We shall denote the $\infty$-category of such (with structure maps as above for $i \le n$) by $\mr{d.p.}\Coalg_{B\mc{O}^{\le n}}$.

\begin{thm}[Heuts]\label{thm:halfKoszul}
There are equivalences of $\infty$-categories
$$ P_n\Alg_\mc{O} \simeq \mr{d.p.}\Coalg_{B\mc{O}^{\le n}}. $$
\end{thm}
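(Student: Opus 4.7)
The plan is to specialize the general machinery developed earlier in this section to $\mc{C} = \Alg_\mc{O}$, and then to identify the resulting notion of Tate-compatible coalgebra with the notion of divided power $B\mc{O}$-coalgebra. First, applying Theorem~\ref{thm:Tatecoalgebra} with $\mc{C} = \Alg_\mc{O}$ immediately gives an equivalence
$$ P_n\Alg_\mc{O} \simeq \mr{Tate}\Coalg_{\otimes^{\le n}_{\Alg_\mc{O}}}. $$
To unpack the right-hand side, I invoke the Francis-Gaitsgory identification cited just above, which provides the equivalence of cooperads $\partial_*(\Sigma^\infty_{\Alg_\mc{O}}\Omega^\infty_{\Alg_\mc{O}}) \simeq B\mc{O}$, and therefore the multilinear cross-effects are
$$ \otimes^k_{\Alg_\mc{O}}(Y_1, \ldots, Y_k) \simeq B\mc{O}_k \wedge_R Y_1 \wedge_R \cdots \wedge_R Y_k, \qquad k \le n. $$

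The central step is to verify that the Tate diagonals $\delta^k_{\Alg_\mc{O}}$ vanish on $\Mod_R$. The reason is essentially that the comonad $\Sigma^\infty_{\Alg_\mc{O}}\Omega^\infty_{\Alg_\mc{O}} \simeq \mc{F}_{B\mc{O}}$ is itself a coproduct of its homogeneous pieces $(B\mc{O}_i \wedge_R (-)^{\wedge_R i})_{h\Sigma_i}$, so that its Goodwillie tower splits: the transition map $P_n \to P_{n-1}$ of the comonad admits a canonical section, and the McCarthy pullback square defining this transition map has null-homotopic bottom arrow into $(B\mc{O}_n \wedge_R Y^{\wedge_R n})^{t\Sigma_n}$. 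Tracing this splitting through the inductive construction of the Tate diagonals forces each $\delta^k_{\Alg_\mc{O}}$ to be nullhomotopic. I expect this to be the main obstacle: while the coproduct decomposition makes the vanishing intuitive, one must carefully propagate the splitting through Heuts' inductive construction of $\mr{Tate}\Coalg$ and keep track of all the higher coherences implicit in the Tate-compatible data.

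Finally, with the Tate diagonals trivialized, I translate the notion of Tate compatibility into that of divided power structure. A Tate-compatible coalgebra structure on $A \in \Mod_R$ consists of coaction maps $\Delta^k: A \to (B\mc{O}_k \wedge_R A^{\wedge_R k})^{h\Sigma_k}$ together with compatible null-homotopies of the composites $A \to (B\mc{O}_k \wedge_R A^{\wedge_R k})^{t\Sigma_k}$ (since the target diagonals $\delta^k_{\Alg_\mc{O}}$ are now null). Via the norm fiber sequence
$$ (B\mc{O}_k \wedge_R A^{\wedge_R k})_{h\Sigma_k} \to (B\mc{O}_k \wedge_R A^{\wedge_R k})^{h\Sigma_k} \to (B\mc{O}_k \wedge_R A^{\wedge_R k})^{t\Sigma_k}, $$
the data of such a null-homotopy is equivalent to a lift of $\Delta^k$ through the norm map, and this is precisely the divided power coaction data on $A$. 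Assembling these lifts compatibly for all $k \le n$ gives the desired equivalence $\mr{Tate}\Coalg_{\otimes^{\le n}_{\Alg_\mc{O}}} \simeq \mr{d.p.}\Coalg_{B\mc{O}^{\le n}}$, completing the proof.
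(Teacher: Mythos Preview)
Your proposal is correct and follows essentially the same line as the paper's discussion surrounding the theorem: apply Theorem~\ref{thm:Tatecoalgebra} to $\mc{C} = \Alg_\mc{O}$, invoke the Francis--Gaitsgory identification of $\partial_*(\Sigma^\infty_{\Alg_\mc{O}}\Omega^\infty_{\Alg_\mc{O}})$ with $B\mc{O}$, observe that the Tate diagonals are null, and then identify Tate-compatible structure with divided power structure via the norm fiber sequence. The paper in fact treats the last identification as essentially definitional (``or perhaps one can take this as a definition of a divided power structure''), so your explicit argument for why the Tate diagonals vanish --- using the splitting of the Goodwillie tower of $\mc{F}_{B\mc{O}}$ --- supplies more detail than the paper itself offers at this point.
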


Heuts recovers the following weak Koszul duality result.

\begin{cor}[Heuts]\label{cor:halfKoszul}
There is a fully faithful embedding
$$ \TAQ^{\mc{O}}: \Alg_\mc{O}^{\mr{conv}} \hookrightarrow \holim_n \mr{d.p.}\Coalg_{B\mc{O}^{\le n}}. $$
\end{cor}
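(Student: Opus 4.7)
The plan is to deduce the corollary in a few short steps, combining Theorem~\ref{thm:halfKoszul} with Heuts' earlier general result that the natural functor $\mc{C} \to P_\infty \mc{C} := \holim_n P_n \mc{C}$ restricts to a fully faithful embedding on the subcategory $\mc{C}^{\mr{conv}}$ of convergent objects. Applying the latter to $\mc{C} = \Alg_\mc{O}$ (which is pointed and compactly generated) yields a fully faithful functor
$$ \Alg_\mc{O}^{\mr{conv}} \hookrightarrow \holim_n P_n \Alg_\mc{O}, $$
and then Theorem~\ref{thm:halfKoszul} identifies each $P_n \Alg_\mc{O}$ with $\mr{d.p.}\Coalg_{B\mc{O}^{\le n}}$, producing the target $\holim_n \mr{d.p.}\Coalg_{B\mc{O}^{\le n}}$.

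It remains to identify the resulting functor with $\TAQ^\mc{O}$. By the theorem recalled at the start of Section~\ref{sec:Koszul}, under the equivalence $\Sp(\Alg_\mc{O}) \simeq \Mod_R$ the stabilization functor $\Sigma^\infty_{\Alg_\mc{O}}$ becomes $\TAQ^\mc{O}$. Consequently, at the bottom of the tower ($n=1$) the embedding sends $A$ to $\TAQ^\mc{O}(A)$, and at each higher level $n$ the underlying $R$-module of $\Sigma^\infty_{\Alg_\mc{O}, n}(A) \in P_n\Alg_\mc{O} \simeq \mr{d.p.}\Coalg_{B\mc{O}^{\le n}}$ is still $\TAQ^\mc{O}(A)$, now equipped with the canonical divided power $B\mc{O}^{\le n}$-coalgebra structure coming from the equivalence. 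These structures assemble in the homotopy limit into the claimed enhancement of $\TAQ^\mc{O}$, so the functor obtained by the general procedure is, by definition, the one named in the statement.

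The main obstacle is the coherent compatibility of Heuts' equivalences of Theorem~\ref{thm:halfKoszul} with the natural projections on either side: one must verify that the square whose top and bottom edges are the equivalences $P_n \Alg_\mc{O} \simeq \mr{d.p.}\Coalg_{B\mc{O}^{\le n}}$ and $P_{n-1} \Alg_\mc{O} \simeq \mr{d.p.}\Coalg_{B\mc{O}^{\le n-1}}$, and whose vertical edges are the canonical truncations, commutes homotopy coherently, so that the equivalence on $\holim_n$ is well-defined. This should follow directly from Heuts' inductive construction of the equivalences through the pullback squares governed by the Tate diagonals $\delta^n_{\Alg_\mc{O}}$. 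A crucial simplification is available here, already emphasized in the discussion preceding Theorem~\ref{thm:halfKoszul}: the Tate diagonals on $\Mod_R \simeq \Sp(\Alg_\mc{O})$ are null, so the pullback squares degenerate, the divided power structure reduces to a compatible system of liftings through the norm maps, and the level-by-level identifications glue without further choices.
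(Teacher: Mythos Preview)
Your proposal is correct and follows exactly the approach the paper has in mind: the corollary is stated without proof because it is meant to follow immediately from Theorem~\ref{thm:halfKoszul} together with Heuts' general result (recalled earlier in the same section) that $\mc{C} \to P_\infty\mc{C}$ is fully faithful on $\mc{C}^{\mr{conv}}$, with the identification $\Sigma^\infty_{\Alg_\mc{O}} \simeq \TAQ^\mc{O}$ supplying the name of the functor. Your additional remarks on the coherence of the tower equivalences are reasonable due diligence, but the paper (being a survey) simply takes this for granted.
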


To determine the convergent objects of $\Id_{\Alg_\mc{O}}$, it is helpful to know the structure of this Goodwillie tower.  The following result was suggested by Harper and Hess \cite{HarperHess}, was proven in the case of the commutative operad by Kuhn \cite{KuhnAdv}, and was proven by Pereira \cite{Pereira2}.

\begin{thm}[Pereira]
The Goodwillie tower of $\Id_{\Alg_\mc{O}}$ is given by
$$ P_n\Id_{\Alg_\mc{O}}(A) = B(\mc{F}_{\tau_n\mc{O}}, \mc{F}_\mc{O}, A). $$
Here $\tau_n\mc{O}$ denotes the truncation (\ref{eq:truncation}).
\end{thm}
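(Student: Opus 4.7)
The plan is to identify the functor $T_n(A) := B(\mc{F}_{\tau_n\mc{O}}, \mc{F}_\mc{O}, A)$ with $P_n\Id_{\Alg_\mc{O}}(A)$ by first observing that it carries a natural transformation from the identity, then checking the claim on free algebras, and finally extending to arbitrary $\mc{O}$-algebras via a simplicial resolution.

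First one should verify that $\tau_n\mc{O}$ is a genuine operad. Since the operadic composition $\mc{O}_j \circ (\mc{O}_{i_1},\ldots,\mc{O}_{i_j}) \to \mc{O}_{i_1+\cdots+i_j}$ raises arity, any input of arity $>n$ produces output of arity $>n$, so the components of arity $>n$ form an operadic ideal and the quotient $\mc{O} \twoheadrightarrow \tau_n\mc{O}$ is a map of (reduced) operads. Hence $\mc{F}_{\tau_n\mc{O}}$ is a monad and $\mc{F}_{\tau_n\mc{O}}$ is a right $\mc{F}_\mc{O}$-module, so the two-sided bar construction $T_n$ is defined. The map of monads $\mc{F}_\mc{O} \to \mc{F}_{\tau_n\mc{O}}$, together with the canonical resolution $A \simeq |B(\mc{F}_\mc{O}, \mc{F}_\mc{O}, A)|$, produces the natural transformation $\eta_n \colon \Id \to T_n$.

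Next, evaluate $T_n$ on free algebras. The standard extra-degeneracy argument splits the simplicial object $B_\bullet(\mc{F}_{\tau_n\mc{O}}, \mc{F}_\mc{O}, \mc{F}_\mc{O}(X))$, yielding
\[
T_n(\mc{F}_\mc{O}(X)) \simeq \mc{F}_{\tau_n\mc{O}}(X) = \bigvee_{i \le n}(\mc{O}_i \wedge_R X^{\wedge_R i})_{h\Sigma_i},
\]
which, as a functor of $X \in \Mod_R$, is a finite wedge of $i$-homogeneous functors for $i \le n$ and hence $n$-excisive. Because $\mc{F}_\mc{O}$ is a left adjoint it preserves strongly cocartesian cubes, and because the forgetful functor $\Alg_\mc{O} \to \Mod_R$ preserves limits, the restriction along $\mc{F}_\mc{O}$ of any $n$-excisive endofunctor of $\Alg_\mc{O}$ is $n$-excisive in the generator; moreover, on the essential image of $\mc{F}_\mc{O}$ the map $\mc{F}_\mc{O}(X) \to \mc{F}_{\tau_n\mc{O}}(X)$ is exhibited as the universal $n$-excisive approximation of $\Id \circ \mc{F}_\mc{O}$ as a functor of $X$.

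Finally, one invokes the monadic simplicial resolution $A \simeq |\mc{F}_\mc{O}^{\bullet+1}(A)|$ and the commutation of $\Id_{\Alg_\mc{O}}$, $T_n$, and the $P_n$ construction with geometric realizations of simplicial free $\mc{O}$-algebras; combining these with the free-algebra computation of the previous paragraph gives
\[
P_n\Id_{\Alg_\mc{O}}(A) \simeq \bigl|P_n\Id_{\Alg_\mc{O}}\bigl(\mc{F}_\mc{O}(\mc{F}_\mc{O}^\bullet(A))\bigr)\bigr| \simeq |\mc{F}_{\tau_n\mc{O}}(\mc{F}_\mc{O}^\bullet(A))| = T_n(A),
\]
the middle identification being forced by universality applied levelwise. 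The most delicate part will be this last step: one must justify that $P_n$ commutes with the realization of the monadic bar resolution in $\Alg_\mc{O}$, and dually that $T_n$ really is $n$-excisive as a functor of $A$ rather than merely of the generator $X$ when $A$ is free. Both issues reduce to controlling the interaction of Goodwillie calculus with sifted colimits in $\Alg_\mc{O}$, which Pereira handles via a careful cellular analysis of strongly cocartesian cubes of $\mc{O}$-algebras, exploiting the fact that such cubes arise from strongly cocartesian cubes of free-algebra cell attachments built from pushouts of $R$-modules.
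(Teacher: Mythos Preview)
The paper does not give a proof of this theorem; it is quoted as a result from the literature, with attribution to Pereira \cite{Pereira2} (and to Harper--Hess \cite{HarperHess} for the conjecture and to Kuhn \cite{KuhnAdv} for the commutative case). There is therefore no proof in the paper to compare your proposal against.

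As a standalone sketch, your outline has the right architecture: the extra-degeneracy computation $T_n(\mc{F}_\mc{O}(X)) \simeq \mc{F}_{\tau_n\mc{O}}(X)$ is correct, and resolving a general $A$ by its monadic bar construction is the natural move. You have also correctly located the real difficulty. The step that is not yet justified is the passage from ``$n$-excisive in the generator $X$'' to ``$n$-excisive as an endofunctor of $\Alg_\mc{O}$''. Excision is tested on strongly cocartesian cubes in $\Alg_\mc{O}$, and pushouts in $\Alg_\mc{O}$ are not computed on underlying $R$-modules, so knowing that $X \mapsto \mc{F}_{\tau_n\mc{O}}(X)$ is $n$-excisive on $\Mod_R$ does not immediately yield that $A \mapsto T_n(A)$ is $n$-excisive on $\Alg_\mc{O}$. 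Likewise, the assertion that $P_n$ commutes with the geometric realization of the monadic bar resolution is not formal: $P_n$ is a sequential homotopy colimit of homotopy limits and does not commute with arbitrary sifted colimits. Your final sentence gestures at Pereira's actual method --- a filtration argument analyzing strongly cocartesian cubes built from free-cell attachments --- but the proposal stops short of carrying it out. That analysis is where the content lies, and it is what Pereira supplies in \cite{Pereira2}.
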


In particular, connectivity estimates of Harper and Hess \cite{HarperHess} imply that if $R$ and $\mc{O}$ are connective, and $A$ is connected, then $A$ is convergent.  Thus Corollary~\ref{cor:halfKoszul} recovers half of Theorem~\ref{thm:Koszulduality}. Another important case are operads for which $\mc{O} = \tau_{n}\mc{O}$.  Then every $\mc{O}$-algebra is convergent, and Corollary~\ref{cor:halfKoszul} recovers a theorem of Cohn.

\subsection*{Application to unstable $v_n$-periodic homotopy}

To recover and generalize Theorem~\ref{thm:main2}, Heuts applies his general framework to the unstable $v_n$-periodic homotopy category.  Unfortunately, the $\infty$-category modeling $M_n^f\Top_*$ of Section~\ref{sec:vn} seems to fail to be compactly generated.  To rectify this, Heuts works with a slightly different $\infty$-category, which we will denote $v_n^{-1}\Top_*$.  This is the full $\infty$-subcategory of $L_n^f\Top_*$ consisting of colimits of finite $(d_n-1)$-connected type $n$ complexes. The categories $v_n^{-1}\Top_*$ and $M^f_n\Top_*$ are very closely related. The Bousfield-Kuhn functor factors as
\begin{equation}\label{eq:factor}
\xymatrix{ 
\Top_* \ar[rr]^{\Phi_n} \ar[dr] && \Sp_{T(n)} \\
& v_n^{-1}\Top_* \ar@{.>}[ur]_{\Phi_n'} 
}
\end{equation}
and detects the equivalences in $v_n^{-1}\Top_*$.

We have $\Sp(v_n^{-1}\Top_*) \simeq \Sp_{T(n)}$.  The multilinear cross-effects are given by the commutative cooperad:
$$ \partial_*(\Sigma^\infty\Omega^\infty) \simeq \Comm^\vee. $$
In this context Theorem~\ref{thm:tate} implies that the Tate diagonals are trivial, and Tate compatible commutative coalgebras are the same thing as commutative coalgebras.  Heuts deduces (using Theorems~\ref{thm:Tatecoalgebra} and \ref{thm:halfKoszul}):

\begin{thm}[Heuts]\label{thm:vn}
There are equivalences of $\infty$-categories
$$ P_k(v_n^{-1}\Top_*) \simeq \Coalg_{\Comm^{\le k}}(\Sp_{T(n)}) \simeq P_k (\Alg_{s^{-1}\Lie}(\Sp_{T(n)})). $$
\end{thm}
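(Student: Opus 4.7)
The plan is to obtain each of the two equivalences as a direct application of Heuts' general results: Theorem~\ref{thm:Tatecoalgebra} for the left equivalence and Theorem~\ref{thm:halfKoszul} for the right one. The three preliminary identifications to establish in the $T(n)$-local setting are: (1) the stabilization $\Sp(v_n^{-1}\Top_*)$ is $\Sp_{T(n)}$; (2) the cross-effect cooperad of $\Sigma^\infty_{v_n^{-1}\Top_*}\Omega^\infty_{v_n^{-1}\Top_*}$ is the commutative cooperad $\Comm^\vee$; and (3) $T(n)$-locally, the Tate diagonals are null, so that the Tate-compatibility data in Theorem~\ref{thm:Tatecoalgebra} and the divided power data in Theorem~\ref{thm:halfKoszul} both collapse, leaving only ordinary coalgebra structure.

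For (1), one invokes the factorization (\ref{eq:factor}): since $\Phi_n'$ detects equivalences and is compatible with the stabilization adjunction, and since $v_n^{-1}\Top_*$ is by construction generated under filtered colimits by finite $(d_n-1)$-connected type $n$ complexes (whose suspension spectra generate $\Sp_{T(n)}$), the stabilization is as claimed. For (2), one applies the $T(n)$-local Snaith splitting to identify $\Sigma^\infty\Omega^\infty\Sigma^\infty V \simeq \bigvee_i (V^{\wedge i})_{h\Sigma_i}$ for $V$ of finite type $n$; extracting the multilinearized cross-effect on this decomposition gives $\partial_k(\Sigma^\infty\Omega^\infty) \simeq S$ with trivial $\Sigma_k$-action, i.e., $\Comm^\vee$ as a symmetric sequence, and the cooperad structure on derivatives is forced by functoriality. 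For (3), Theorem~\ref{thm:tate} says that $Z^{tG} \simeq *$ for any $T(n)$-local $Z$ with finite group action; since the Tate diagonals $\delta^i_{v_n^{-1}\Top_*}$ land in iterated Tate constructions built from $T(n)$-local spectra, they must all be null, so the space of Tate-compatibility homotopies is contractible.

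Given these inputs, Theorem~\ref{thm:Tatecoalgebra} yields the first equivalence
$$ P_k(v_n^{-1}\Top_*) \simeq \mr{Tate}\Coalg_{\otimes^{\le k}_{v_n^{-1}\Top_*}} \simeq \Coalg_{\Comm^{\le k}}(\Sp_{T(n)}). $$
For the second, apply Theorem~\ref{thm:halfKoszul} to $\mc{O} = s^{-1}\Lie$ viewed as an operad in $\Sp_{T(n)}$. Ching's Koszul duality computation (Example~\ref{ex:spectrallie} together with Example~\ref{ex:Lie}) gives $B(s^{-1}\Lie) \simeq \Comm^\vee$, so
$$ P_k(\Alg_{s^{-1}\Lie}(\Sp_{T(n)})) \simeq \mr{d.p.}\Coalg_{B(s^{-1}\Lie)^{\le k}} \simeq \mr{d.p.}\Coalg_{\Comm^{\vee,\le k}}(\Sp_{T(n)}), $$
and the Tate vanishing argument of (3) again identifies divided power $\Comm^\vee$-coalgebras with ordinary $\Comm^\vee$-coalgebras in $\Sp_{T(n)}$, matching the middle term.

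The main obstacle is step (2), the computation of the cross-effect cooperad on $v_n^{-1}\Top_*$ rather than on $\Top_*$. The Snaith decomposition is a statement about pointed spaces, and one must take care that the relevant multilinearization, performed internally to $v_n^{-1}\Top_*$ (where the identity is not $1$-excisive), produces the expected cooperad structure rather than some shifted or twisted variant; this requires checking compatibility between the nullification defining $v_n^{-1}\Top_*$ and the cooperadic compositions coming from iterated $\Sigma^\infty\Omega^\infty$. Once this identification is verified, steps (1) and (3) are essentially formal consequences of what is already in hand in the survey.
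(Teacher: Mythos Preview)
Your proposal is correct and follows essentially the same approach as the paper: identify the stabilization as $\Sp_{T(n)}$, the cross-effect cooperad as $\Comm^\vee$, use Theorem~\ref{thm:tate} to kill the Tate diagonals (and hence collapse both Tate-compatibility and divided power data), and then invoke Theorems~\ref{thm:Tatecoalgebra} and \ref{thm:halfKoszul}. The paper simply asserts the identification $\partial_*(\Sigma^\infty\Omega^\infty)\simeq \Comm^\vee$ without further justification, so your flagged obstacle in step~(2) is a point the survey itself does not address in detail.
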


In a sense made precise in the corollary below, this gives two spectral algebra models of $\mc{C}$.

\begin{cor}
There are fully faithful embeddings of $\infty$-categories
\begin{align*}
(v_n^{-1}\Top_*)^{\mr{conv}} & \hookrightarrow \holim_k \Coalg_{(\Comm^\vee)^{\le k}}(\Sp_{T(n)}), \\
(v_n^{-1}\Top_*)^{\mr{conv}} & \hookrightarrow P_\infty \Alg_{s^{-1}\Lie}(\Sp_{T(n)}).
\end{align*}
\end{cor}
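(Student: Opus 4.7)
The plan is to deduce the corollary as a formal consequence of Theorem~\ref{thm:vn} combined with the general full faithfulness result quoted from Heuts that was recorded just after the definition of $P_n\mc{C}$. Specifically, for any pointed compactly generated $\infty$-category $\mc{C}$, the natural functor
$$ \mc{C} \rightarrow P_\infty\mc{C} := \holim_n P_n\mc{C} $$
restricts to a fully faithful embedding on the subcategory $\mc{C}^{\mr{conv}}$ of convergent objects. We apply this with $\mc{C} = v_n^{-1}\Top_*$, which is compactly generated by construction (its generators are the finite $(d_n-1)$-connected type $n$ complexes), and pointed.

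First I would pass to the limit in the two equivalences of Theorem~\ref{thm:vn}. Since homotopy limits of $\infty$-categories commute with themselves, the componentwise equivalences $P_k(v_n^{-1}\Top_*) \simeq \Coalg_{(\Comm^\vee)^{\le k}}(\Sp_{T(n)})$ assemble to an equivalence
$$ P_\infty(v_n^{-1}\Top_*) \simeq \holim_k \Coalg_{(\Comm^\vee)^{\le k}}(\Sp_{T(n)}). $$
Composing with the canonical fully faithful embedding $(v_n^{-1}\Top_*)^{\mr{conv}} \hookrightarrow P_\infty(v_n^{-1}\Top_*)$ gives the first asserted embedding. For the second, I would similarly take the limit of the equivalences $P_k(v_n^{-1}\Top_*) \simeq P_k(\Alg_{s^{-1}\Lie}(\Sp_{T(n)}))$, yielding
$$ P_\infty(v_n^{-1}\Top_*) \simeq P_\infty \Alg_{s^{-1}\Lie}(\Sp_{T(n)}), $$
and precompose with the fully faithful embedding on convergent objects to obtain the second embedding. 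Heuristically, the first embedding sends $X$ to the system $\{(\Sigma^\infty X)_{T(n)}^{\otimes k}/\Sigma_k\}_k$ equipped with its comultiplication structure, while the second sends $X$ to the $\TAQ$-spectrum $\TAQ_{S_{T(n)}}(S_{T(n)}^X)$ with its Koszul-dual spectral Lie algebra structure --- precisely the comparison map of Section~\ref{sec:comparison}.

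The only real content beyond the already-invoked theorems is the verification that the homotopy limit of the equivalences of Theorem~\ref{thm:vn} is compatible with the structure maps in the tower of polynomial approximations. I expect this to be the main (but essentially bookkeeping) obstacle: one needs to check that the equivalences $P_k(v_n^{-1}\Top_*) \simeq \Coalg_{(\Comm^\vee)^{\le k}}(\Sp_{T(n)})$ and $P_k(v_n^{-1}\Top_*) \simeq P_k\Alg_{s^{-1}\Lie}(\Sp_{T(n)})$ are natural in $k$ with respect to the obvious restriction/truncation functors, so that the induced maps on homotopy limits agree with the canonical maps. Naturality of this form is built into Heuts' construction of the equivalences in Theorem~\ref{thm:Tatecoalgebra} and Theorem~\ref{thm:halfKoszul} (both are produced via universal properties of the $P_k$'s, which are themselves compatible as $k$ varies), so the compatibility is automatic once one unwinds the definitions. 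No new input specific to unstable $v_n$-periodic homotopy is needed beyond what was already used to establish Theorem~\ref{thm:vn}, namely the triviality of $T(n)$-local Tate spectra (Theorem~\ref{thm:tate}) and the identification $\partial_*(\Sigma^\infty\Omega^\infty) \simeq \Comm^\vee$ on $\Sp_{T(n)}$.
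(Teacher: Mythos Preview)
Your argument is correct and matches the paper's (implicit) approach: the corollary is stated without proof, being an immediate consequence of Theorem~\ref{thm:vn} together with Heuts' general result that $\mc{C}^{\mr{conv}} \hookrightarrow P_\infty\mc{C}$ is fully faithful, exactly as you outline. One small correction to your closing heuristic (which does not affect the proof): the first embedding sends $X$ to $(\Sigma^\infty X)_{T(n)}$ with its commutative coalgebra structure coming from the diagonal, not to a system of symmetric powers; this is spelled out in the paper in the paragraph following the corollary.
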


We can be explicit about the functors giving these spectral algebra models. 
In general there is an adjunction
$$ \mr{triv} : \Mod_R \leftrightarrows \Coalg_{\Comm^\vee_R}: \mr{Prim}  $$
where $\mr{triv} \: Y$ is the coalgebra with trivial coproduct, and $\mr{Prim}(A)$ is the \emph{derived primitives} of a coalgebra $A$, given by the comonadic cobar construction:
$$ \mr{Prim}(A) := C(\Id, \mc{F}_{\mr{Comm}^\vee_R}, A). $$
For $A$ a $\Comm_R$-algebra finite as an $R$-module, we have
\begin{equation}\label{eq:TAQPrim}
\TAQ_R(A) \simeq \mr{Prim} (A^\vee).
\end{equation}
Ching's work endows $\mr{Prim}(A)$ with the structure of an $s^{-1}\Lie$-algebra.  

The functors of Theorem~\ref{thm:vn} are induced from the functors
$$ v^{-1}_n\Top_* \xrightarrow{(\Sigma^\infty -)_{T(n)}} \Coalg_{\mr{Comm}^\vee}(\Sp_{T(n)}) \xrightarrow{\Prim} \Alg_{s^{-1}\Lie}(\Sp_{T(n)}). $$
An argument following the same lines as Section~\ref{sec:comparison} gives a refined comparison map
$$ \td{c}_X: \Phi_n(X) \rightarrow \Prim (\Sigma^\infty X)_{T(n)}. $$
Under the equivalence (\ref{eq:TAQPrim}), this agrees with the comparison map $c_{X}$ for $X$ finite, and for such $X$ gives $c_X^{K(n)}$ after $K(n)$-localization.  From Theorem~\ref{thm:vn}, Heuts deduces that for a space $X$, the comparison map refines to an equivalence of towers
\begin{equation}\label{eq:twotowersII}
 \td{c}_X : \Phi_n P_k\Id_{\Top_*} X \xrightarrow{\simeq} \mr{Prim} \: 
\Omega^\infty_{\mr{\Coalg}, k} \Sigma^\infty_{\mr{\Coalg},k} (\Sigma^\infty X)_{T(n)}.
\end{equation}
Using Theorem~\ref{thm:AroneMahowald}, Heuts obtains the following refinement of Theorem~\ref{thm:main}.

\begin{cor}[Heuts]
The comparison map $\td{c}_X$ is an equivalence for $X$ a sphere. 
\end{cor}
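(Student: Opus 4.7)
The plan is to obtain the corollary by passing to the inverse limit over $k$ in the tower equivalence (\ref{eq:twotowersII}), using Arone-Mahowald to collapse the source tower. Fix $X = S^q$.

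The key input is Theorem~\ref{thm:AroneMahowald}: for $X$ a sphere, the natural map $\Phi_n(X) \to \Phi_n P_k\Id_{\Top_*}(X)$ is an equivalence as soon as $k \ge 2p^n$. The source tower in (\ref{eq:twotowersII}) is therefore eventually constant on $\Phi_n(X)$, so $\holim_k \Phi_n P_k\Id_{\Top_*}(X) \simeq \Phi_n(X)$. Taking $\holim_k$ in (\ref{eq:twotowersII}) yields
\[
\Phi_n(X) \;\xrightarrow{\simeq}\; \holim_k \Prim\bigl(\Omega^\infty_{\Coalg,k}\Sigma^\infty_{\Coalg,k}(\Sigma^\infty X)_{T(n)}\bigr).
\]

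Since $\Prim$ is right adjoint to $\mr{triv}$, it commutes with homotopy inverse limits, so the target of the last display equals $\Prim\bigl(\holim_k \Omega^\infty_{\Coalg,k}\Sigma^\infty_{\Coalg,k}(\Sigma^\infty X)_{T(n)}\bigr)$. I would next identify the argument of $\Prim$ with $(\Sigma^\infty X)_{T(n)}$ itself. By Heuts' equivalence $P_k(v_n^{-1}\Top_*) \simeq \Coalg_{\Comm^{\le k}}(\Sp_{T(n)})$ of Theorem~\ref{thm:vn}, the endofunctor $\Omega^\infty_{\Coalg,k}\Sigma^\infty_{\Coalg,k}$ on $\Sp_{T(n)}$ transports to $\Omega^\infty_{v_n^{-1}\Top_*,k}\Sigma^\infty_{v_n^{-1}\Top_*,k} = P_k\Id_{v_n^{-1}\Top_*}$. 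Since $\Phi_n$ detects equivalences in $v_n^{-1}\Top_*$ via the factorization (\ref{eq:factor}), the Arone-Mahowald collapse says $X$ is convergent in $v_n^{-1}\Top_*$, and Heuts' general theory (the fully faithful embedding $\mc{C}^{\mr{conv}}\hookrightarrow \holim_k P_k\mc{C}$) then gives $X \xrightarrow{\simeq} \holim_k P_k\Id_{v_n^{-1}\Top_*}(X)$. Translating back through the stabilization $(\Sigma^\infty -)_{T(n)}$, this yields $(\Sigma^\infty X)_{T(n)} \simeq \holim_k \Omega^\infty_{\Coalg,k}\Sigma^\infty_{\Coalg,k}(\Sigma^\infty X)_{T(n)}$, so the right-hand side is $\Prim(\Sigma^\infty X)_{T(n)}$, the target of $\td{c}_X$.

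Stringing these identifications together produces the required equivalence $\td{c}_X : \Phi_n(X) \xrightarrow{\simeq} \Prim(\Sigma^\infty X)_{T(n)}$. The principal technical point is the convergence on the coalgebra side — the equivalence $X \simeq \holim_k P_k\Id_{v_n^{-1}\Top_*}(X)$. This is the coalgebra-side incarnation of Arone-Mahowald, and one also needs to check that the assembled map agrees with $\td{c}_X$ as defined prior to the corollary; both amount to naturality and convergence statements built into Heuts' polynomial approximation framework, and once granted the corollary follows.
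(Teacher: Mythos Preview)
Your proposal is correct and follows the same approach the paper sketches: pass to the limit in the tower equivalence (\ref{eq:twotowersII}) and invoke Arone--Mahowald to collapse the source tower. The paper's own argument is a single line (``Using Theorem~\ref{thm:AroneMahowald}, Heuts obtains\ldots''), and your write-up unpacks exactly the two points that line leaves implicit: that the source tower stabilizes on $\Phi_n(X)$, and that the limit of the target tower is $\Prim(\Sigma^\infty X)_{T(n)}$.

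One small comment on phrasing: you write that the endofunctor $\Omega^\infty_{\Coalg,k}\Sigma^\infty_{\Coalg,k}$ ``on $\Sp_{T(n)}$ transports to $P_k\Id_{v_n^{-1}\Top_*}$''. Strictly speaking this is an endofunctor on the coalgebra category, and the transport you want is the compatibility $(\Sigma^\infty P_k\Id_{v_n^{-1}\Top_*}X)_{T(n)} \simeq P_k\Id_{\Coalg}((\Sigma^\infty X)_{T(n)})$ coming from Theorem~\ref{thm:vn}; since $(\Sigma^\infty-)_{T(n)}$ is not itself an equivalence, the two endofunctors do not literally correspond. But your conclusion stands: convergence of $X$ in $v_n^{-1}\Top_*$ (which you correctly deduce from Arone--Mahowald together with $\Phi_n'$ detecting equivalences) implies convergence of $(\Sigma^\infty X)_{T(n)}$ in the coalgebra category, and hence the identification of the target limit.
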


\begin{ques}\label{ques:phigood}
What is the relationship between the $\infty$-subcategory $(v_n^{-1}\Top_*)^{\mr{conv}} \subseteq v_n^{-1}\Top_*$ and the $\infty$-subcategory consisting of the images of $\Phi_n$-good spaces?
\end{ques}

\begin{rmk}
If we knew that the functor $\Phi'_{n}$ of (\ref{eq:factor}) preserved homotopy limits, then it is fairly easy to check (using the fact that $\Phi'_n$ detects equivalences) that the two $\infty$-subcategories of Question~\ref{ques:phigood} would in fact coincide.  As already remarked in Section~\ref{sec:vn}, $\Phi_n$ also factors through a related functor
$$ \Phi_{n}'': M_n^f \Top_* \rightarrow \Sp_{T(n)}. $$
Bousfield produces a left adjoint for $\Phi''_n$ in \cite{Bousfield}, and it therefore follows that $\Phi_n''$ commutes with homotopy limits.
\end{rmk}

It would seem that for $X$ an infinite CW complex, the coalgebra $(\Sigma^\infty X)_{T(n)}$ is a more appropriate model for the unstable $v_n$-periodic homotopy type $X$ than the algebra $S_{T(n)}^X$.  To this end we ask the following

\begin{ques}\label{ques:phigood2}
Is $\td{c}_X$ an equivalence for \emph{all} $\Phi_n$-good spaces $X$?
\end{ques}

\begin{rmk}\label{rmk:phigood2}
We expect the answer to Question~\ref{ques:phigood2} should be ``yes'', as the tower which is the target of (\ref{eq:twotowersII}) should be an analog for primitives of the Kuhn filtration, and hence should converge without hypotheses.
\end{rmk}

\bibliographystyle{amsalpha}
\nocite{*}
\bibliography{BKTAQsurvey3}

\end{document}